\documentclass[reqno, oneside]{amsart}

\usepackage{Style} 

\title[A geometric and generating function approach to plethysm]{A geometric and generating function\\ approach to plethysm}
\author[Á. Gutiérrez, R. Orellana, F. Saliola, A. Schilling, M. Zabrocki]{Álvaro Gutiérrez, Rosa Orellana, Franco Saliola,\\ Anne Schilling, Mike Zabrocki}
\date{\today}
\begin{document}

\begin{abstract}
Plethysm coefficients $\mathsf{a}_{\mu[\nu]}^\lambda$ are the structure coefficients of the plethysm
of Schur functions $s_\mu[s_\nu] = \sum_{\lambda} \mathsf{a}_{\mu[\nu]}^\lambda s_\lambda$.  We study
a bivariate generating function of plethysm coefficients when $\lambda$ has bounded length. We show
that this generating function is rational.   A key step is MacMahon's combinatory analysis.
When the bound on the length is $2$ we give an explicit geometric algorithm to compute it
using $q$-Ehrhart theory. We give evidence that the
generating function is the quantum Ehrhart series of a union of half-open polytopes and show that
it satisfies a reciprocity theorem reminiscent of Ehrhart reciprocity. Furthermore, we give a set
of linear recursions that completely describe the $\mathrm{SL}_2$-plethysm coefficients.
\end{abstract}

\maketitle

\vspace{-1em}

\tableofcontents

\section{Introduction}
\label{sec: introduction}

Given two polynomial representations of the general linear group
$\rho:\operatorname{GL}_n\to\operatorname{GL}_m$ and
$\tau:\operatorname{GL}_m\to\operatorname{GL}_k$, we can compose them to obtain a
new representation of the general linear group $\tau\circ \rho:\operatorname{GL}_n\to\operatorname{GL}_k$.
This operation was first studied by Littlewood~\cite{Littlewood.1936} who
described it in terms of symmetric functions:
if $f$ and $g$ are the symmetric functions corresponding to the characters
of $\rho$ and $\tau$, respectively, then their \emph{plethysm} is
the symmetric function $g[f]$ corresponding to the character of $\tau \circ \rho$.
The \emph{plethysm coefficients} $\aaa_{\mu[\nu]}^\lambda$ are the structure constants of the plethysm of Schur functions,
\[
s_\mu[s_\nu] = \sum_{\lambda} \aaa_{\mu[\nu]}^\lambda \, s_\lambda.
\]
A major open problem as identified by Stanley~\cite{Stanley.2000} is to find a
combinatorial interpretation of $\aaa_{\mu[\nu]}^\lambda$. A solution to this
problem has important applications in physics~\cite{Wybourne.1970}, 
invariant theory~\cite{Howe.1987}, and complexity theory~\cite{MulmuleySohoni.2003}.
Various special cases have been solved, going back to the work of Littlewood in the 40's (see
for instance~\cite{Littlewood.1950,Carre.Leclerc.1995,Carini.Remmel.1998,LR.2004,Loehr.Remmel.2011,PW.2019,
OSSZ.2022,GR.2023,COSSZ.2022,BP.2024,OSSZ.2024,GutCrystals,PakPanovaSwanson.2025}
and references therein), but a general formula or interpretation for $\aaa_{\mu[\nu]}^\lambda$ is still elusive.
These generating functions were shown to be rational functions by  Mulmuley~\cite{GCT6},
who conjectured that they are Ehrhart series of (unions of) closed 
polytopes. Kahle and Michałek~\cite{KahleMichalek18} disproved this conjecture
by arguing that it would violate Ehrhart reciprocity for closed polytopes.
We note in Section~\ref{sec:KM} that a weakening of Mulmuley's conjecture is
still open.
We remark that the use of generating functions to study representation-theoretic
coefficients has appeared in other works; see for example~\cite{PS, DG,
KingWelsh, BBDGK, KahleMichalek18}.

We revisit this approach by introducing two
generating functions of plethysm coefficients: for a partition $\mu$,
\begin{equation}
\label{equation.Amu GLn}
\mathbb{A}_\mu(x_1,\ldots,x_n;y_1,\ldots,y_m)
= \sum_{\ell(\lambda)\leqslant n} \sum_{\ell(\nu)\leqslant m} \aaa_{\mu[\nu]}^\lambda \, x^\lambda \, y^\nu,
\end{equation}
and for partitions $\mu$ such that its length $\ell(\mu) \leqslant n$,
\begin{equation}
\label{equation.Bmu GLn}
\mathbb{B}_\mu(x_1,\ldots,x_n;y_1,\ldots,y_m)
= \sum_{\ell(\lambda)\leqslant n} \sum_{\ell(\nu)\leqslant m} \aaa_{\nu[\mu]}^\lambda \, x^\lambda \, y^\nu,
\end{equation}
where $x^\lambda = x_1^{\lambda_1}x_2^{\lambda_2}\cdots x_n^{\lambda_{n}}$, and $y^\nu$ is similarly defined. 
We show that both of these generating functions are rational, broadening Mulmuley's result.

The initial part of this paper is devoted to a special case of \eqref{equation.Amu GLn}.
When $n = 2$ and $m = 1$, the coefficients $\aaa_{\mu[\nu]}^\lambda$
appearing in \eqref{equation.Amu GLn} are indexed by $\lambda$ with
at most two parts and $\nu$ with at most one part.
Writing $\lambda = (\lambda_1, \lambda_2)$ and $\nu = (h)$,
with $\lambda_1, \lambda_2, h \in \mathbb{N}$, we have
\begin{equation}
\label{equation.Amu GLn-special-case}
\mathbb{A}_\mu(x_1,x_2;y_1)
= \sum_{\lambda_1 \geqslant \lambda_2 \geqslant 0} 
\sum_{h \geqslant 0} \aaa_{\mu[(h)]}^{(\lambda_1, \lambda_2)} \, x_1^{\lambda_1} \, x_2^{\lambda_2} \, y_1^{h}.
\end{equation}
These coefficients admit an alternative description.
As explained in Section~\ref{SL2-plethysm-coefficients},
$s_\mu[s_{(h)}](q,q^{-1})$ can be expressed uniquely as
\begin{equation}
    s_\mu[s_{(h)}](q,q^{-1}) = \sum_{k \geqslant 0}  a_{\mu[h]}^{[k]} \, [k]_q,
\end{equation}
where the $a_{\mu[h]}^{[k]}$ are coefficients in $\mathbb{N}$, and
the $[k]_q$ are the \emph{quantum} or \emph{symmetric} \emph{$q$-integers} defined as
\begin{equation}
\label{equation.q integer}
	[k]_q := \frac{q^k-q^{-k}}{q-q^{-1}} \quad \text{for $k \geqslant 0$.}
\end{equation}
Then it turns out that
\[
	\aaa_{\mu[(h)]}^{(\lambda_1,\lambda_2)} =a_{\mu[h]}^{[\lambda_1-\lambda_2+1]}.
\]
Since these coefficients depend only on the difference $\lambda_1 - \lambda_2$,
we study the generating function
\begin{equation}
\label{equation.Amu}
	A_\mu(z, q) := q \mathbb{A}_{\mu}(q, q^{-1}; z)
    = \sum_{k\geqslant 1} \sum_{h\geqslant 0} a_{\mu[h]}^{[k]} \, q^k \, z^h.
\end{equation}
We refer to the coefficients $a_{\mu[h]}^{[k]}$ as \emph{$\SL_2$-plethysm
coefficients} due to their connection with characters of $\SL_2$.

In this paper we show that $A_\mu(z,q)$ is a rational generating
function. We do this by combining several results from Ehrhart theory and
MacMahon's combinatory analysis.
We aim to give a geometric account of each step of our derivation as properties
of $A_\mu(z,q)$ suggest it is a geometric object (cf.~Conjecture~\ref{conj:Ehr}
and Section~\ref{section.reciprocity}).
Moreover, our results on $A_\mu(z, q)$ demonstrate that the
$\SL_2$-plethysm coefficients possess hidden structures.
Our main results are as follows.
\begin{enumerate}[leftmargin=*, topsep=2ex, itemsep=2ex]
\item
We show (see Theorem~\ref{th:semi-final}) that there exists a polynomial $p_\mu(z,q)\in\ZZ[q^\pm,z]$
such that
\[
A_\mu(z,q) = \frac{p_\mu(z,q)}{d_{|\mu|}(z,q)},
\qquad\text{where}\quad
    d_w(z,q) :=
    \begin{cases}
        (1-z)\prod_{i=1}^w (1-z^i) \prod_{i=1}^{\frac{w}{2}} (1-q^{2i} z), & \text{for $w$ even,}\\[1ex]
        \prod_{i=1}^w (1-z^{2i}) \prod_{i=1}^{\frac{w+1}{2}} (1-q^{2i-1} z), & \text{for $w$ odd.}
    \end{cases}
\]
We conjecture a denominator with fewer terms (see Conjecture~\ref{conj.good denominator}).
\item
For $|\mu|\leqslant 5$, we express $A_\mu(z,q)$ as a sum of
\emph{positive} rational generating functions
(see Section~\ref{ss:GeneratingFunctions} and Appendix~\ref{appendix.A}).
This makes $A_\mu(z,q)$ the 
weighted generating function of the lattice points of a disjoint union of polytopes,
which allows us to obtain a combinatorial interpretation of the associated $\SL_2$-plethysm coefficients.
We believe these positive expansions should exist more generally; more precisely, we conjecture the following,
which reflects the philosophy of Mulmuley's conjecture, but neither implies it nor is implied by it.
\begin{conjecture}\label{conj:Ehr}
    The generating function $A_\mu(z,q)$ is a weighted generating function of lattice points of
    a disjoint union of convex rational half-open cones.
\end{conjecture}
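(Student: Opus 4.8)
The plan is to promote the $q$-Ehrhart description of $A_\mu(z,q)$ furnished by Theorem~\ref{theorem.A QEhr} to a genuinely positive lattice-point model. Write $f_{\mu,h}(q) := s_\mu[s_h](q,q^{-1})$, a symmetric Laurent polynomial in $q$. Expanding in the basis $[k]_q = q^{k-1}+q^{k-3}+\dots+q^{-(k-1)}$ shows that the $\SL_2$-plethysm coefficient is recovered as the finite difference
\[
a_{\mu[h]}^{[k]} = [q^{k-1}]\, f_{\mu,h}(q) - [q^{k+1}]\, f_{\mu,h}(q),
\]
where $[q^j]$ denotes extraction of the coefficient of $q^j$. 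The first step is to extract, from the combinatorics underlying Theorem~\ref{theorem.A QEhr} --- the $\Omega$-operator evaluation of \cite{MacMahon,BZ-Omega} applied to a monomial expansion of the plethysm --- a \emph{parametric} rational polytope $Q_\mu(h,j)\subseteq\mathbb{R}^{d}$, with facet inequalities depending affinely on the integer parameters $(h,j)$, such that $[q^j]\,f_{\mu,h}(q) = \#\big(Q_\mu(h,j)\cap\mathbb{Z}^{d}\big)$ for all $h\geqslant 0$ and all $j\in\mathbb{Z}$. Up to bookkeeping, this parametric polytope together with its decomposition into chambers of linearity is exactly what the algorithm behind Theorem~\ref{theorem.A QEhr} outputs.

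The crux is then to show that the family $\{Q_\mu(h,j)\}$ can be chosen \emph{nested}, $Q_\mu(h,j+2)\subseteq Q_\mu(h,j)$, so that
\[
a_{\mu[h]}^{[k]} = \#\big((Q_\mu(h,k-1)\setminus Q_\mu(h,k+1))\cap\mathbb{Z}^{d}\big)
\]
is itself the lattice-point count of a half-open rational polytope $R_\mu(h,k)$ whose combinatorial type is constant on each chamber of the $(h,k)$-parameter space. Numerically the nesting just encodes the known positivity $a_{\mu[h]}^{[k]}\geqslant 0$; the content is to realize it geometrically and uniformly in $\mu$. For $|\mu|\leqslant 5$ this is exactly the information carried by the explicit positive expansions of Section~\ref{ss:GeneratingFunctions}, and one expects a general construction from a coplactic/sliding operation --- a jeu-de-taquin rectification, or a crystal move compatible with the order $\gge$ --- that realizes each unit of $a_{\mu[h]}^{[k]}$ as a lattice point of $Q_\mu(h,k-1)$ that survives the passage to degree $k+1$. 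Granting this, one passes to the cone over the parametric family, adjoining homogenizing coordinates that record $h$ (tracked by $z$) and $k$ (tracked by $q$); $q$-Ehrhart theory \cite{Chapoton,BK-Chapoton} then identifies $A_\mu(z,q)$ with the weighted lattice-point generating function of this cone, and a half-open simplicial subdivision chosen compatibly with the boundaries of the regions $R_\mu(h,k)$ exhibits it as a disjoint union of convex rational half-open cones. As a consistency check, cones of the shape $\prod_i(1-z^{a_i}q^{b_i})$ are exactly what produce the denominators predicted by Theorem~\ref{th:semi-final}.

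The main obstacle is wall-crossing. Within a single chamber the polytope $Q_\mu(h,j)$ has a fixed combinatorial type, and the nesting, the half-open decomposition, and the triangulation are all routine; but as $(h,j)$ crosses a wall the polytope degenerates and re-expands, and the half-open face assignments on the two sides must be reconciled so that the global cone decomposition remains disjoint and positive. Controlling this --- equivalently, showing that the chamberwise positive expansions glue into a single positive expansion valid for all $\mu$ --- is precisely what the brute-force computations for $|\mu|\leqslant 5$ sidestep, and it is where a genuinely new input seems to be needed: a bijective, crystal-theoretic, or representation-theoretic model for $s_\mu[s_h]$ that behaves well under the limits in $h$ and $k$ that govern the linear recursions.
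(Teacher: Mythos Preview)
The statement you are addressing is a \emph{conjecture} in the paper, not a theorem: the paper offers no proof, only supporting evidence (explicit positive expansions for $|\mu|\leqslant 5$, the reciprocity Theorem~\ref{thm:reciprocity}, and numerical checks of the $\hhh^*$ inequalities). So there is no proof in the paper to compare your proposal against.

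Your proposal is not a proof either, and you are candid about this. The entire difficulty of the conjecture is concentrated in the step you flag with ``Granting this'' and then again under ``The main obstacle is wall-crossing'': namely, producing a \emph{uniform} nested family $Q_\mu(h,j+2)\subseteq Q_\mu(h,j)$ whose half-open boundaries glue compatibly across all chambers of the parameter space. Everything before that point --- the finite-difference formula for $a_{\mu[h]}^{[k]}$, the existence of \emph{some} parametric polytope computing $[q^j]f_{\mu,h}(q)$, the passage to a cone once a positive model is in hand --- is standard bookkeeping and already implicit in the paper's setup (Theorem~\ref{theorem.A QEhr} and Section~\ref{ss:GeneratingFunctions}). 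The substantive claim, that nesting can be realized geometrically and uniformly in $\mu$, is asserted but not argued: you invoke an unspecified ``coplactic/sliding operation'' or ``crystal move'' and then immediately concede that ``a genuinely new input seems to be needed.'' That is precisely the content of the conjecture. In short, your proposal correctly locates where the problem lies, but it does not close the gap; it is a plan of attack, not a proof.
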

\noindent
\item
We show that $A_\mu(z,q)$ satisfies a reciprocity theorem (see Theorem~\ref{thm:reciprocity}) that is reminiscent of ($q$-)Ehrhart reciprocity 
for half-open polytopes:
\begin{equation}
\label{equation.reciprocity}
	A_\mu(z^{-1},q^{-1}) = (-1)^{w}z^{2} A_{\mu'}(z,q),
\end{equation}
where $\mu'$ is the conjugate of $\mu$. This gives evidence for Conjecture~\ref{conj:Ehr}.

Note that if $\mu$ is self-conjugate then~\eqref{equation.reciprocity} makes $A_\mu$ ``self-reciprocal'', in the sense that
\[
\frac{A_\mu(z^{-1},q^{-1})}{A_{\mu}(z,q)}
\]
is just a power of $z$, up to a sign. In Corollary~\ref{cor:hooks} we establish this phenomenon for a larger family of partitions which includes also 
hooks, rectangular partitions, and when $w$ is even, partitions of $w$ with empty $(w/2)$-core.

\item
We give a set of recurrences that completely describe $\SL_2$-plethysm coefficients (see Section~\ref{ss:recursion}).
These results generalize our previous enumerative results obtained in \cite{OSSZ.2024} and \cite{GutCrystals}. In these papers, 
we obtained recursions satisfied by some $\SL_2$-plethysm coefficients and combinatorial interpretations for the case $\mu = (3)$ and $\mu=(4)$
by finding symmetric chain decompositions of the Young lattice.

Note that for $\mu=(w)$, the coefficients $a_{\mu[h]}^{[k]}$ are also certain Kronecker coefficients~\cite{PakPanova.2013,Panova.2023}.
A combinatorial interpretation for the $\SL_2$-plethysm coefficients is given in~\cite{PakPanovaSwanson.2025} in
terms of computational trees, by a detailed analysis of a generalization of the KOH formula \cite{OHara,Zeilberger.1989,GOS}, which is
different from the approach and results in this paper.

\item We outline a new computational approach towards the Foulkes conjecture. This conjecture states that if $a>b$ then 
$\langle s_a[s_b] - s_b[s_a], s_\lambda\rangle\geqslant 0$ for all partitions $\lambda$ of $ab$. It is known to hold for $b\leqslant 5$ \cite{CIM}. 
For each fixed $b$, we reduce the problem to a finite computation; a proof of the conjecture for $b=3$ is included in Appendix~\ref{appendix.B}. 
The main obstacle to push this result further is the speed of existing algorithms to express a rational function as a sum of positive rational functions.
\end{enumerate}

The paper is organized as follows. In Section~\ref{s:GeometricPicture}, we provide the geometric interpretation for
$A_\mu(z,q)$ culminating in Theorem~\ref{theorem.A QEhr} relating it to a $q$-Ehrhart series via the MacMahon operator.
In Section~\ref{section.combinatorial}, we deduce combinatorial facts from these results and give linear recursions for the $\SL_2$-plethysm coefficients.
In Section~\ref{section.Amu}, we provide explicit expressions for the denominator of $A_\mu(z,q)$.
In Section~\ref{section.reciprocity} we give a proof of the reciprocity in~\eqref{equation.reciprocity}.
We conclude in Section~\ref{section.GLn} with the analysis of the generating functions~\eqref{equation.Amu GLn} 
and~\eqref{equation.Bmu GLn} for $\GL_n$-plethysm coefficients.

\subsection*{Acknowledgements}
The authors would like to thank Matthias Beck, Giulia Codenotti, Jesús De Loera, Darij Grinberg, Yifan Guo, Christian Ikenmeyer, Carly Klivans, 
Greta Panova, Allan P\'erez, Brendon Rhoades, Josh Swanson, Monica Vazirani, Ole Warnaar, Geordie William\-son, and Guoce Xin
for many enlightening discussions. In particular, we thank Jesús De Loera for confirming that~\cite[Thm. 2.9]{DLVVW.2024} holds for
half-open polytopes, and Guoce Xin for providing the proofs of Lemma~\ref{lemma.partial fraction} and Theorem~\ref{thm:reciprocity}.

This material is based upon work supported by the National Science Foundation under Grant No. DMS-1928930, while the last four authors
were in residence at the Simons Laufer Mathematical Sciences Institute in Berkeley, California, during the 2025 Summer Research in 
Mathematics program.
This material is based upon work supported by the National Science Foundation under Grant No. DMS-1929284 while the authors were 
in residence at the Institute for Computational and Experimental Research in Mathematics in Providence, RI, during the
``Categorification and Computation in Algebraic Combinatorics'' semester program in Fall 2025.
Any opinions, findings, and conclusions or recommendations expressed in this material are those of the authors and do not necessarily reflect the views 
of the National Science Foundation.

ÁG was funded by a University of Bristol Research Training Support Grant.
AS was partially supported by  NSF grant DMS--2053350 and Simons Foundation grant MPS-TSM-00007191.
RO was partially NSF grants  DMS--2153998 and DMS--2452044.
FS acknowledges the support of the Natural Sciences and Engineering Research Council of Canada (NSERC) funding reference number RGPIN-2023-04476.
MZ was supported by NSERC/CNSRG.

\section{Geometric picture}
\label{s:GeometricPicture}

In this section, we develop a geometric approach to Schur polynomials and the generating function $A_\mu(z,q)$ of 
$\SL_2$-plethysm coefficients defined in~\eqref{equation.Amu} by combining modern interpretations of classical results from different areas: 
Ehrhart theory, (quasi)symmetric functions, and MacMahon's combinatory analysis. We rely on concepts developed 
in~\cite{BJR,GPS}, \cite{Chapoton,BK-Chapoton}, and~\cite{BZ-Omega}.

While individual results in this section are well known, their connection to plethysm and the geometric interpretation of the Schur polynomials 
is not found in the literature.  Our new result in this section is Theorem~\ref{theorem:integer_point_enumerator}.

\subsection{Preliminaries}
\label{section.notation}

We introduce some notation. See~\cite{EC2} for more details.

A \emph{partition} $\lambda=(\lambda_1,\ldots,\lambda_\ell)$ of a nonnegative integer $w$, denoted $\lambda \vdash w$, satisfies
$\lambda_1 \geqslant \lambda_2 \geqslant \cdots \geqslant \lambda_\ell >0$ for $\lambda_i$ integers and $\lambda_1+\cdots+\lambda_\ell=w$. 
We call $\ell(\lambda):=\ell$ the length of $\lambda$. A \emph{cell} $c=(i,j)$ in $\lambda$ satisfies $1\leqslant i \leqslant \ell$ and $1\leqslant j \leqslant
\lambda_i$.
For any cell $c \in \lambda$, denote the \emph{hook length} of $c$ in the partition $\lambda$ by $\mathsf{hl}_\lambda(c)$, which
is the number of cells in $\lambda$ strictly to the right of $c$ (meaning with larger column index) plus the number of cells in $\mu$ strictly below $c$ plus 1 
(meaning with bigger row index than $c$). The \emph{Durfee square} of the partition $\lambda$ is the largest $1\leqslant s\leqslant \ell$ such that 
$\lambda_s\geqslant s$.

For $\lambda$ a partition, denote by $\SSYT_{h+1}(\lambda)$ the set of \emph{semistandard Young tableaux}
of shape $\lambda$ over the alphabet $\{0,1,\ldots,h\}$. For each $\lambda$, the \emph{Schur polynomial} is defined as
\[
	s_\lambda(x_0,\ldots,x_h) = \sum_{T\in \SSYT_{h+1}(\lambda)} x^{\wt(T)},
\]
where $\wt(T)=(\alpha_0,\ldots,\alpha_h)$ is the \emph{weight} of $T$ with $\alpha_i$ equal to the number of letters $i$ in $T$ and
$x^{\wt(T)} = x_0^{\alpha_0} \cdots x_h^{\alpha_h}$. We denote by $\shape(T)=\lambda$ the shape of $T$.

The \emph{power sum} is defined as
\[
	p_k(x_0,\ldots,x_h) = \sum_{i=0}^k x_i^k
\]
and $p_\lambda = p_{\lambda_1} \cdots p_{\lambda_\ell}$ for a partition $\lambda=(\lambda_1,\ldots,\lambda_\ell)$.

A \emph{composition} $\beta=(\beta_1,\ldots,\beta_\ell)$ of a nonnegative integer $w$, denoted $\beta \models w$,
satisfies $\beta_1+\cdots + \beta_\ell=w$ and $\beta_i$ are positive integers.
The \emph{monomial quasisymmetric polynomial} indexed by a composition $\beta=(\beta_1,\ldots,\beta_\ell)$ with $\ell\leqslant h+1$
is defined as
\[
	M_\beta(x_0,\ldots,x_h) = \sum_{0\leqslant i_1<\cdots<i_\ell\leqslant h} x_{i_1}^{\beta_1}\cdots x_{i_\ell}^{\beta_\ell}.
\]
Gessel~\cite{Gessel.1984} introduced what are now known as \emph{Gessel's fundamental quasisymmetric function} as
\[
	F_{\alpha}(x_0,\ldots,x_h) = \sum_{\beta \preccurlyeq \alpha} M_\beta(x_0,\ldots,x_h),
\]
where $\alpha$ is also a composition and $\beta \preccurlyeq \alpha$ indicates that $\beta$ is a refinement of $\alpha$.

Let $\SYT(\lambda)$ be the set of \emph{standard Young tableaux} of shape $\lambda$. Then Gessel~\cite{Gessel.1984} proved that
\[
	s_\lambda(x_0,\ldots,x_h) = \sum_{T\in \SYT(\lambda)} F_{\Des(T)}(x_0,\ldots,x_h).
\]
Here $\Des(T)$ is the descent set of $T$, where $i$ is a \emph{descent} if the letter $i+1$ appears in larger (lower) row than
$i$ in English convention for partitions. If $\Des(T)=\{d_1,\ldots,d_k\}$ is the descent set of $T$, then we can associate to it the descent
composition $\alpha=(\alpha_1,\ldots,\alpha_k)$, where $\alpha_i = d_i-d_{i-1}$ and by convention $d_0=0$. By abuse of notation
$F_{\Des(T)} = F_\alpha$.

Similar to the descent set, we define the \emph{ascent set} $\Asc(T)$ as the letters $i$ in $T\in \SYT(\lambda)$ such $i+1$ is strictly to the 
right of $i$. Ascents and descents can also be defined for permutations $\pi\in S_w$, where $S_w$ is the symmetric group on $w$ elements.
A letter $i\in \pi$ is a descent (resp. ascent) if $i+1$ lies to the left (resp. right) of $i$. We denote by $\des(T)$ (resp. $\asc(T)$) the cardinality
of $\Des(T)$ (resp. $\Asc(T)$) and similarly for permutations.

The \emph{major index} of a tableau $T\in \SYT(\lambda)$ is
\[
	\maj(T) = \sum_{i\in \Des(T)} i
\]
and similarly $\maj(\pi) = \sum_{i\in \Des(\pi)} i$ for $\pi\in S_w$.
If we denote by $\lambda'$ (resp. $T'$) the conjugate of the partition $\lambda$ (resp. tableau $T\in\SYT(\lambda)$, then note that
\[
	\des(T') = |T|-1-\des(T) \quad \text{and} \quad \maj(T') = \binom{|T|}{2} - \maj(T).
\]

\subsection{\texorpdfstring{$\SL_2$}{SL2}-plethysm coefficients}
\label{SL2-plethysm-coefficients}

The character ring of $\SL_n$ is $\mathbb{Z}_{\geqslant 0}[x_1,x_2,\ldots,x_n]^{S_n} /(x_1 \cdots x_n -1)$, where 
$\mathbb{Z}_{\geqslant 0}[x_1,\ldots, x_n]^{S_n}$ denotes the symmetric polynomials in $n$ variables
with nonnegative integer coefficients. In particular, the character ring of $\SL_2$ is identified with the 
$\mathbb{Z}_{\geqslant 0}$-span of the $q$-integers as defined in~\eqref{equation.q integer}.
Define the \emph{$q$-binomial} as
\[
\Qbinom{n}{k}
=
\frac{[n]_q[n-1]_q\cdots[n-k+1]_q}{[k]_q[k-1]_q\cdots[1]_q}
\]
for $0 \leqslant k \leqslant n$, and $\Qbinom{n}{k} = 0$ otherwise. 

As shown in~\cite[\S I.8.~Example 4, p.~137]{Macdonald},
for $w, h \in \mathbb{N}$,
\[
	s_w[s_h](q^{-1},q)= \Qbinom{w+h}{w} = \sum_{k\geqslant 1} a_{w[h]}^{[k]} \, [k]_q
\]
for some nonnegative integer coefficients $a_{w[h]}^{[k]}$.
More generally, we write for a partition $\mu$
\[
	s_\mu[s_h](q^{-1},q) = \sum_{k\geqslant 1} a_{\mu[h]}^{[k]} \, [k]_q,
\]
where $a_{\mu[h]}^{[k]}$ is the coefficient of $[k]_q$ in $s_\mu[s_h](q^{-1},q)$,
which we call a \emph{$\SL_2$-plethysm coefficient}.

Up to an overall power of $q$ and replacing
$q$ by $q^2$, the plethysm $s_\mu[s_h](q^{-1},q)$ is given by the principle specialization $s_\mu(1,q,q^2,\ldots,q^h)$ which 
in turn is related to the generalized Gaussian polynomial~\cite[p.~137]{Macdonald}.

\subsection{The integer-point enumerator and Schur polynomials}

Let $V = \C^{h+1}$ denote the natural representation of $\SL_{h+1}(\C)$.
The character of $V$ is given by $x_0 + \cdots + x_h$.
The character of the tensor power $V^{\otimes w}$ is given by $(x_0+\cdots+x_h)^w$.
A corollary of the RSK algorithm \cite[Cor.~7.12.5]{EC2} and a consequence of Stanley's theory of $P$-partitions \cite[Thm.~7.19.7]{EC2} give the equalities
\begin{equation}
\label{eq:character of cube}
(x_0+\cdots+x_h)^w = \sum_{\lambda\vdash w} f^\lambda s_\lambda(x_0, \ldots, x_h) = \sum_{\pi \in S_w} F_{\Des(\pi)}(x_0, \ldots, x_h),
\end{equation} 
where $f^\lambda$ is the number of standard Young tableaux of shape $\lambda$.
Since we work over $h+1$ variables, we automatically have that the partitions $\lambda$ have at most $h+1$ parts.
Our first goal is to paint a geometric picture for~\eqref{eq:character of cube}, which loosely follows \cite{BJR, GPS}.\medskip

Recall that the RSK algorithm gives a bijection between words $v_1 v_2 \cdots
v_w$ of length $w$ in the alphabet $\{0,1,\ldots,h\}$ and pairs of tableaux
in $\bigcup_{\lambda\vdash w} \SSYT_{h+1}(\lambda)\times\SYT(\lambda)$; see for example~\cite{SaganBook,EC2}.
Now consider the $w$-dimensional unit hypercube $\square = [0,1]^w$.
By identifying the word $v_1 v_2 \cdots v_w$ with the vector $(v_1, v_2, \ldots, v_w)$,
the RSK algorithm can be interpreted as a bijection between 
the integral points of the dilated hypercube $h\square$
and the set $\bigcup_{\lambda\vdash w} \SSYT_{h+1}(\lambda)\times\SYT(\lambda)$.
We refine this bijection as follows.

For $1 \leqslant i < j \leqslant w$, the hyperplane
$\{ v_i = v_j\} := \{ (v_1, \ldots, v_w) : v_i = v_j\}$
divides the cube $\square$ into two regions, 
\[
\square\cap\{v_i \leqslant v_j\}, ~~\text{and}~~
\square\cap\{v_i > v_j\}.
\]
The \emph{braid hyperplane arrangement} $\bigcup_{1\leqslant i < j \leqslant w} \{v_i = v_j\}$ then divides the hypercube into $w!$ regions 
(\emph{chambers}), each chamber naturally indexed by a total order on $\{v_1, \ldots, v_w\}$. The closures of these chambers are usually called
\emph{Weyl chambers} in the literature; however, we work with sets defined by weak and strict inequalities. That is to say, we want to work with 
the half-open polytopes defined by the hyperplanes.
A permutation $\pi\in S_w$ corresponds naturally to an order $(v_{\pi(1)}, \ldots, v_{\pi(w)})$. From now on we denote by $\chamber(\pi)$ the 
half-open chamber corresponding to the permutation~$\pi$ and often drop the qualifier ``half-open'' when referring to these regions.
Explicitly, $\chamber(\pi)$ is the half-open polytope
\begin{align*}
    \chamber(\pi) &= \{
    v\in \square \mid
    v_{\pi(i)} \leqslant v_{\pi(i+1)} \text{~if~} \pi(i)<\pi(i+1),~~
    v_{\pi(i)} < v_{\pi(i+1)} \text{~otherwise}
    \}\\
    &= \{
    v\in \square \mid
    v_{\pi(i)} \leqslant v_{\pi(i+1)} \text{~if~} i\in\Asc(\pi),~~
    v_{\pi(i)} < v_{\pi(i+1)} \text{~if~} i\in\Des(\pi)
    \}.
\end{align*}
We denote by $h\cdot\chamber(\pi)$ the dilated chamber with point in $v\in h\square$ and otherwise the same conditions.

\begin{example}\label{ex:chamber}
    Let $w=6$. Then
    \[
    \chamber(231456) = \{v\in \square \mid v_2 \leqslant v_3 < v_1 \leqslant v_4 \leqslant v_5 \leqslant v_6\}.
    \]
\end{example}

\begin{example}
    Let $w=3$. The integer points of the $3!$ chambers of $9\square$ are depicted in Figure \ref{fig:chambers}.
    To its right, a generic slice of the cube separating the points in chambers.
    The set $\{v_1\leqslant v_2 \leqslant v_3\}$ is $\chamber(123)$, the set $\{v_2 < v_1 \leqslant v_3\}$ is $\chamber(213)$, and so on.
\end{example}

    \begin{figure}[h]
        \centering
        \includegraphics[width=0.3\linewidth]{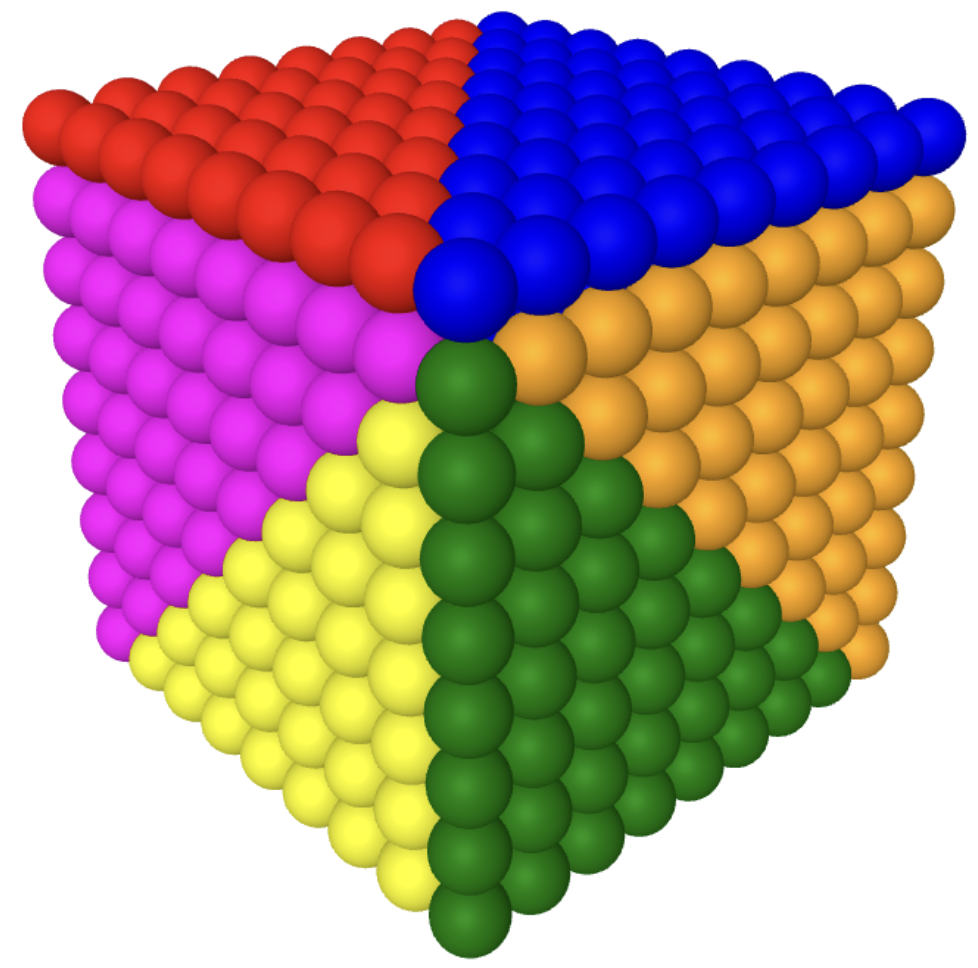}
        \begin{tikzpicture}[x=2.5em,y=2.5em,rotate=30]
            \begin{scope}[line width=6, rounded corners=1pt]
            \filldraw[blue!50] (0,0) -- ++(0:2) -- ++(120:2) -- cycle;
            \filldraw[red!60] (-.2,.34) -- ++(60:1.6) -- ++(180:1.6) -- cycle;
            \filldraw[orange!70] (.2,-.34) -- ++(-60:1.6) -- ++(60:1.6) -- cycle;
            \filldraw[magenta!70] (-.4,0) -- ++(180:1.6) -- ++(60:1.6) -- cycle;
            \filldraw[green!70!black!70] (-.2,-.34) -- ++(-60:1.6) -- ++(180:1.6) -- cycle;
            \filldraw[yellow!90!orange!70] (-.6,-.34) -- ++(180:1.2) -- ++(-60:1.2) -- cycle;
            \end{scope}
            \begin{scope}[very thick, opacity=.5]
            \draw (0,0) -- ++(0:2.5) node[anchor=west]{\footnotesize $v_1=v_2$};
            \draw (0,0) -- ++(60:2.5) node[anchor=south]{\footnotesize $v_2=v_3$};
            \draw (0,0) -- ++(120:2.5);
            \draw (0,0) -- ++(180:2.5);
            \draw (0,0) -- ++(240:2.5);
            \draw (0,0) -- ++(300:2.5) node[anchor=west]{\footnotesize $v_1=v_3$};
            \end{scope}
            \begin{scope}
            \scriptsize
            \node (blue) at (2.5,1) {$v_1\leqslant v_2\leqslant v_3$};
            \node (orange) at (2.5,-1.5) {$v_2< v_1\leqslant v_3$};
            \node (red) at (-.3,2.5) {$v_1\leqslant v_3< v_2$};
            \node (magenta) at (-2.8,1.2) {$v_3< v_1\leqslant v_2$};
            \node (yellow!90!orange!70) at (-2.4,-.8) {$v_3< v_2 < v_1$};
            \node (green) at (0,-2.5) {$v_2\leqslant v_3 < v_1$};
            \end{scope}
        \end{tikzpicture}
        \caption{Let $w=3$. On the left, the integral points of the cube $9\square$ divided into its six chambers. On the right, a generic slice by a plane perpendicular to $(1,1,1)$, overlaid with the braid hyperplane arrangement.}
        \label{fig:chambers}
    \end{figure}
    
A finer subdivision can be obtained by breaking each region $\square\cap\{v_i \leqslant v_j\}$ into the sets of points $\square\cap\{v_i = v_j\}$ 
and $\square\cap\{v_i < v_j\}$. The resulting regions into which $\square$ is divided are known as \emph{faces} of the braid arrangement. The number of faces is $\sum_{\pi\in S_w} 2^{\asc(\pi)}$, and known 
as the $w$th \emph{Fubini number}. The faces are half-open polytopes naturally labeled by weak orders on $\{v_1,\ldots,v_w\}$ or, equivalently, by ordered set partitions 
$\gamma$ of $\{1,\ldots,w\}$, and denoted by $\finechamber(\gamma)$. See Figure \ref{fig:subdivisions} for an illustration of the faces of $\square$ 
for $w=3$.

\begin{example}\label{ex:fine chamber}
    Let $w=6$. Then
    \[
    \finechamber(23|145|6) = 
    \{v\in \square \mid v_2 = v_3 < v_1 = v_4 = v_5 < v_6\}.
    \]
\end{example}

An ordered set partition $\gamma$ gives rise to a permutation $\pi^\gamma\in S_w$ by ordering each part increasingly and concatenating 
the resulting words in order. The face $\finechamber(\gamma)$ is by construction contained in $\chamber(\pi^\gamma)$. 
\begin{example}
    We have $\pi^{23|145|6} = 231456$. The face $\finechamber(23|145|6)$ from Example~\ref{ex:fine chamber} is contained in the chamber 
    $\chamber(231456)$ from Example~\ref{ex:chamber}.
\end{example}

\begin{note}
    The faces of a hyperplane arrangement form a semigroup (see \cite{saliola2009face}).
    For each permutation $\pi$, let $F(\pi) = \finechamber(\pi(1)|\pi(2)|\ldots|\pi(w))$
    be the full-dimen\-sional face contained in $\chamber(\pi)$. Unraveling the definitions
    above, we get an alternative description of the chambers as
    \[
    \chamber(\pi) = \bigcup_{\finechamber(\gamma)F(12\ldots w) = F(\pi)} \finechamber(\gamma).
    \]
\end{note}

The sizes of the parts of an ordered set partition $\gamma$ form a composition $\alpha^\gamma$.

\begin{example}
    We have $\alpha^{23|145|6} = (2,3,1)$.
\end{example}

We show below that we can explicitly compute a `generating function' of the
points of a face $\finechamber(\gamma)$, and identify it with a certain
quasisymmetric polynomial labeled by $\alpha^\gamma$.
The notion of generating function that we need was introduced in \cite{BJR} for any matroid; we only need the special case in which the matroid comes from a hyperplane arrangement. Several aspects of this generating function are studied in \cite{GR.2014, BK, GPS}.

\begin{de}
The \emph{integer-point enumerator} of a $w$-dimensional polytope $P$ at height $h$ is the generating function of the integral points of $hP$, namely
\[
\ipe_P(h;x_0,x_1,\ldots) = \sum_{v\in hP\cap \ZZ^w} x_{v_1}x_{v_2}\cdots x_{v_w}.
\]
\end{de}
\begin{note}
    The specialization $\ipe_P(h;1,1,\ldots)$ is the Ehrhart polynomial $\ehr_P(h)$.
\end{note}
\begin{note}\label{note:finitely many variables}
The integer-point enumerator of a \emph{bounded} polytope is a polynomial in finitely many variables; in particular, the integer-point 
enumerator at height $h$ of any polytope contained in $\square$ is a polynomial in the variables $x_0, x_1, \ldots, x_h$.
\end{note}
\begin{lemma}
    The integer-point enumerator of $\finechamber(\gamma)$ at height $h$ is the monomial quasisymmetric polynomial $M_{\alpha^\gamma}(x_0,\ldots,x_h)$.
\end{lemma}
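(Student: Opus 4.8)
The plan is to unwind the definition of the face $\finechamber(\gamma)$ and directly enumerate its lattice points at height $h$. Write $\gamma = B_1 \mid B_2 \mid \cdots \mid B_\ell$ for the ordered set partition of $\{1,\ldots,w\}$, so that $\alpha^\gamma = (\alpha_1,\ldots,\alpha_\ell)$ with $\alpha_k = |B_k|$. By definition, a point $v \in \square$ lies in $\finechamber(\gamma)$ precisely when $v_i = v_j$ whenever $i,j$ belong to the same block and $v_i < v_j$ whenever $i$ lies in an earlier block than $j$. Equivalently, $v$ is constant on each block $B_k$, with common value $c_k$, and these values satisfy $0 \leqslant c_1 < c_2 < \cdots < c_\ell \leqslant 1$. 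So the parametrization of $\finechamber(\gamma)$ by the tuple $(c_1,\ldots,c_\ell)$ is the bijection I would build the argument around.

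Next I would dilate by $h$: a point of $h\cdot\finechamber(\gamma)$ is given by the same data with $0 \leqslant c_1 < \cdots < c_\ell \leqslant h$, and it is integral exactly when every $c_k$ is an integer. Since the $c_k$ are then strictly increasing integers, the chain condition becomes $0 \leqslant c_1 < c_2 < \cdots < c_\ell \leqslant h$ in $\ZZ$. In particular the set of integral points is empty unless $\ell \leqslant h+1$, which matches the vanishing of $M_{\alpha^\gamma}(x_0,\ldots,x_h)$ in that range. For such a point, the coordinate $v_j$ equals $c_k$ for every $j \in B_k$, so the associated monomial factors according to block sizes:
\[
x_{v_1} x_{v_2} \cdots x_{v_w} = \prod_{k=1}^\ell x_{c_k}^{|B_k|} = x_{c_1}^{\alpha_1} x_{c_2}^{\alpha_2} \cdots x_{c_\ell}^{\alpha_\ell}.
\]

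Summing over all integral points of $h\cdot\finechamber(\gamma)$ then yields
\[
\ipe_{\finechamber(\gamma)}(h; x_0,\ldots,x_h) = \sum_{0 \leqslant c_1 < c_2 < \cdots < c_\ell \leqslant h} x_{c_1}^{\alpha_1} x_{c_2}^{\alpha_2} \cdots x_{c_\ell}^{\alpha_\ell},
\]
which is exactly $M_{\alpha^\gamma}(x_0,\ldots,x_h)$ by the definition recalled in Section~\ref{section.notation}. I do not expect a genuine obstacle here beyond bookkeeping; the one point requiring a little care is matching the half-open conventions — the strict inequalities between consecutive blocks of $\gamma$ must be seen to correspond to the strict inequalities $i_1 < \cdots < i_\ell$ in the definition of $M_\beta$, while the weak inequalities $0 \leqslant c_1$ and $c_\ell \leqslant h$ on the boundary of the cube are precisely what permit the extreme indices $0$ and $h$ to occur as exponents.
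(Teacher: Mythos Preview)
Your proposal is correct and takes essentially the same approach as the paper: both argue by directly parametrizing the integral points of $h\cdot\finechamber(\gamma)$ via the common block values and reading off the monomial. The paper phrases the reindexing through the permutation $\pi^\gamma$ rather than your block values $c_k$, but this is a cosmetic difference only.
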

\begin{proof}
    Let $\alpha=\alpha^\gamma$, $\pi=\pi^\gamma$, and $P = \finechamber(\gamma)$. Suppose that $\alpha=(\alpha_1,\ldots,\alpha_k)$ has $k$ parts.
    We have
    \begin{align*}
        \ipe_P(h;x_0,x_1,\ldots) &= \sum_{v\in hP\cap \ZZ^w} x_{v_1}x_{v_2}\cdots x_{v_w}\\
        &= \sum_{v\in hP\cap \ZZ^w} x_{v_{\pi(1)}} x_{v_{\pi(2)}} \cdots x_{v_{\pi(w)}}\\
        &= \sum_{v\in hP\cap \ZZ^w} x_{v_{\pi(1)}}^{\alpha_1} x_{v_{\pi(\alpha_1+1)}}^{\alpha_2} \cdots x_{v_{\pi(\alpha_1+\cdots+\alpha_{k-1}+1)}}^{\alpha_k}\\
        &= \sum_{0\leqslant i_1 < \cdots < i_k \leqslant h} x_{i_1}^{\alpha_1} x_{i_2}^{\alpha_2} \cdots x_{i_k}^{\alpha_k}\\
        &= M_{\alpha}(x_0,\ldots,x_h). \qedhere
    \end{align*}
\end{proof}

Next, we compute the integer point enumerator of a chamber $\chamber(\pi)$.
Let $\pi\in S_w$ and consider the word $\pi(1)\pi(2)\cdots\pi(w)$. By recording the lengths of maximal increasing subsequences of the word, 
we obtain the \emph{descent composition} of $\pi$, that we denote by $\Des(\pi)$.
\begin{example}
    We compute $\Des(231456) = (2,4)$.
\end{example}

\begin{proposition}
    The integer-point enumerator of $\chamber(\pi)$ at height $h$ is the fundamental quasisymmetric polynomial $F_{\Des(\pi)}(x_0,\ldots,x_h)$.
\end{proposition}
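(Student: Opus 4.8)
The plan is to decompose the chamber $\chamber(\pi)$ into the faces it contains and use the previous lemma term by term. Recall from the \textbf{Note} above that
\[
\chamber(\pi) = \bigcup_{\finechamber(\gamma)\, F(12\ldots w) = F(\pi)} \finechamber(\gamma),
\]
and that this is a disjoint union of half-open faces. Applying the integer-point enumerator (which is additive over disjoint unions of lattice-point sets) and the lemma identifying $\ipe_{\finechamber(\gamma)}(h;\cdot) = M_{\alpha^\gamma}$, we obtain
\[
\ipe_{\chamber(\pi)}(h;x_0,x_1,\ldots) = \sum_{\gamma} M_{\alpha^\gamma}(x_0,\ldots,x_h),
\]
the sum over the relevant $\gamma$. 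So the whole proof reduces to a purely combinatorial bijection: the ordered set partitions $\gamma$ with $\finechamber(\gamma)\,F(12\ldots w) = F(\pi)$ are precisely those whose composition $\alpha^\gamma$ refines the descent composition $\Des(\pi)$, and each such composition $\beta \preceq \Des(\pi)$ arises from exactly one such $\gamma$. Once that is established, the right-hand side becomes $\sum_{\beta \preceq \Des(\pi)} M_\beta(x_0,\ldots,x_h) = F_{\Des(\pi)}(x_0,\ldots,x_h)$ by the definition of Gessel's fundamental quasisymmetric function.

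First I would make the condition $\finechamber(\gamma)\,F(12\ldots w) = F(\pi)$ explicit. Geometrically, $\finechamber(\gamma) \subseteq \chamber(\pi)$ means that on each block of $\gamma$ (where the coordinates $v_i$ are forced equal) the labels are listed in increasing order, and between consecutive blocks the strict inequality is consistent with $\pi$; equivalently, $\pi^\gamma = \pi$. So the faces inside $\chamber(\pi)$ are exactly the $\gamma$ with $\pi^\gamma = \pi$, i.e.\ $\gamma$ is obtained from the word $\pi(1)\pi(2)\cdots\pi(w)$ by inserting bars only at positions that are \emph{not} descents of $\pi$ and such that each resulting block is increasing — but automatically each maximal run between chosen non-descent cut points is increasing precisely because we never cut at a descent and the maximal increasing runs of $\pi$ are delimited by its descents. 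Hence $\gamma \mapsto$ (set of chosen cut positions) is a bijection from $\{\gamma : \pi^\gamma = \pi\}$ to subsets of the ascent positions of $\pi$; and under this bijection the block-size composition $\alpha^\gamma$ is exactly the composition one gets by subdividing the runs of $\Des(\pi)$ at the chosen ascents, which ranges over all refinements of $\Des(\pi)$, each exactly once.

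I would then assemble the pieces: additivity of $\ipe$ over the disjoint face decomposition, the lemma for each face, and the refinement bijection, giving
\[
\ipe_{\chamber(\pi)}(h;x_0,\ldots,x_h) = \sum_{\gamma : \pi^\gamma = \pi} M_{\alpha^\gamma}(x_0,\ldots,x_h) = \sum_{\beta \preceq \Des(\pi)} M_\beta(x_0,\ldots,x_h) = F_{\Des(\pi)}(x_0,\ldots,x_h).
\]
The main obstacle — really the only nontrivial point — is verifying that the semigroup condition $\finechamber(\gamma)\,F(12\ldots w) = F(\pi)$ translates exactly into ``$\gamma$ refines the maximal increasing runs of $\pi$,'' i.e.\ that the bar positions are forced to avoid descents and are otherwise free; this requires unwinding the face-semigroup product of \cite{saliola2009face} carefully, or alternatively bypassing it entirely by arguing directly that the half-open faces $\finechamber(\gamma)$ with $\pi^\gamma = \pi$ tile $\chamber(\pi)$ (which follows since every point of $\chamber(\pi)$ lies in a unique face of the braid arrangement, and that face satisfies $\pi^\gamma = \pi$ exactly when the point is in $\chamber(\pi)$). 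With that settled, everything else is bookkeeping with compositions.
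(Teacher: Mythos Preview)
Your approach is essentially the paper's: decompose $\chamber(\pi)$ into its faces, apply the previous lemma to get a sum of monomial quasisymmetric polynomials, and identify the index set with the refinements of $\Des(\pi)$. Your final displayed chain of equalities is verbatim the paper's proof.

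There is one slip in the middle. You write that $\pi^\gamma=\pi$ means ``$\gamma$ is obtained from the word $\pi(1)\cdots\pi(w)$ by inserting bars only at positions that are \emph{not} descents of $\pi$''. This is backwards: if a descent position of $\pi$ is \emph{not} a bar of $\gamma$, then the block containing it is not increasing after sorting, so $\pi^\gamma\neq\pi$. The correct characterization is that the bar positions of $\gamma$ must \emph{contain} every descent position of $\pi$, and may in addition contain any subset of the ascent positions. Equivalently, $\gamma$ is obtained from the coarsest valid partition (bars exactly at the descents) by optionally inserting further bars at ascent positions. With this correction your bijection $\gamma\mapsto(\text{ascent positions chosen})$ is correct, and $\alpha^\gamma$ indeed ranges over all refinements $\beta\preceq\Des(\pi)$, each exactly once. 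The paper simply asserts this step (``these set partitions are exactly those such that $\alpha^\gamma$ refines $\Des(\pi)$'') without further comment.
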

\begin{proof}
    The chamber $\chamber(\pi)$ is the union of the faces $\finechamber(\gamma)$ for all $\gamma$ such that $\pi^\gamma = \pi$. In particular, note 
    that these set partitions are exactly those such that $\alpha^\gamma$ refines $\Des(\pi)$. Recall also from Note~\ref{note:finitely many variables} that the integer-point enumerator of $\finechamber$ at height $h$ is a polynomial in $x_0, \ldots, x_h$. Hence,
    \begin{align*}
    \ipe_{\chamber(\pi)}(h;x_0,x_1\ldots) &= \sum_{\gamma\ :\ \pi^\gamma=\pi} \ipe_{\finechamber(\gamma)}(h;x_0,x_1,\ldots)\\ 
    &= \sum_{\gamma\ :\ \pi^\gamma=\pi} M_{\alpha^\gamma}(x_0,\ldots,x_h)\\ 
    &= \sum_{\alpha\preccurlyeq \Des(\pi)} M_{\alpha}(x_0,\ldots,x_h)\\ 
    &= F_{\Des(\pi)}(x_0,\ldots,x_h).\qedhere
    \end{align*}
\end{proof}

Create a graph with vertex set $S_w$ by placing an edge $\{\pi,\tau\}$ whenever $\chamber(\pi^{-1})$ and $\chamber(\tau^{-1})$ are adjacent chambers (separated by one of the hyperplanes of the braid arrangement). We can turn this graph into a ranked poset, where the rank of $\pi$ is the minimal number of hyperplanes one has to cross to get from $\chamber(\pi^{-1})$ to the identity chamber $\chamber(12\ldots w)$. This is the \emph{weak order} \cite{BB}.

We now define a coarser subdivision of the cube. 
To define it as a real polytope, we need to extend RSK insertion to $\R^w$ in the obvious manner. Note that the insertion tableau of a point has real entries, whereas the recording tableau is a standard Young tableau. For any fixed standard Young tableau $Q$ of size $w$ we define the half-open polytope $\coarsechamber(Q)$ as
\[
\coarsechamber(Q) = \{
v \in \square \mid \RSK(v)_2 = Q
\}.
\]
Since all of the points inside a fixed $\chamber(\pi)$ have the same recording tableau,  this is truly a coarser subdivision.
See Figure \ref{fig:subdivisions}.
\begin{example}
\ytableausetup{smalltableaux}
    Let $w=6$.
    Recall from Example \ref{ex:chamber} that
    \[
    \chamber(231456) = \{v\in \square \mid v_2 \leqslant v_3 < v_1 \leqslant v_4 \leqslant v_5 \leqslant v_6\}.
    \]
    Perform RSK insertion of any point $v$ in the chamber. Start by inserting $v_1$ in the first row; then insert $v_2$, which bumps $v_1$ to the second row. The remaining entries all insert in the first row. We get
    \[
    \RSK(v) = \left(
    \ytableaushort{{v_2}{v_3}{v_4}{v_5}{v_6},{v_1}},~
    \ytableaushort{13456,2}
    \right).
    \]
\end{example} 
Each chamber in $w\square$
contains a unique point $v_\text{std}$ whose insertion tableau is standard. Let $u_\text{std}$ and $v_\text{std}$ be two such points.
They belong to adjacent chambers if they differ by a transposition of two consecutive entries $k$ and $k+1$ for some $k$. On the other hand, they differ by an elementary dual Knuth relation
\begin{align*}
\cdots (k+1) \cdots k \cdots (k+2) \cdots 
&\equiv
\cdots (k+2) \cdots k \cdots (k+1) \cdots 
\quad\text{or}\quad\\
\cdots k \cdots (k+2) \cdots (k+1) \cdots 
&\equiv
\cdots (k+1) \cdots (k+2) \cdots k \cdots 
\end{align*}
if and only if their recording tableaux are equal \cite[Thm.~3.6.10]{SaganBook}. Since dual Knuth relations arise as a transposition of consecutive entries, the above construction is truly a coarsening of the chamber subdivision.

\begin{figure}
    \centering
        \begin{tikzpicture}[x=2.5em,y=2.5em,rotate=30,baseline=0]
            \begin{scope}[line width=6, rounded corners=1pt, line cap = round]
            \filldraw[blue!50] (0,0) circle (2pt);
            \filldraw[blue!50] (0,0) ++ (.4,0) -- ++(0:1.6);
            \filldraw[blue!50, rotate=60] (0,0) ++ (.4,0) -- ++(0:1.6);
            \filldraw[blue!50] (.6,.34) -- ++(0:1.2) -- ++(120:1.2) -- cycle;
            \filldraw[red!60, rotate=60] (0,0) ++ (.6,.34) -- ++(0:1.2) -- ++(120:1.2) -- cycle;
            \filldraw[red!60, rotate=120] (0,0) ++ (.4,0) -- ++(0:1.6);
            \filldraw[magenta!70, rotate=120] (0,0) ++ (.6,.34) -- ++(0:1.2) -- ++(120:1.2) -- cycle;
            \filldraw[magenta!70, rotate=180] (0,0) ++ (.4,0) -- ++(0:1.6);
            \filldraw[orange!70, rotate=-60] (0,0) ++ (.6,.34) -- ++(0:1.2) -- ++(120:1.2) -- cycle;
            \filldraw[orange!70, rotate=-60] (0,0) ++ (.4,0) -- ++(0:1.6);
            \filldraw[green!70!black!70, rotate=-120] (0,0) ++ (.6,.34) -- ++(0:1.2) -- ++(120:1.2) -- cycle;
            \filldraw[green!70!black!70, rotate=-120] (0,0) ++ (.4,0) -- ++(0:1.6);
            \filldraw[yellow!90!orange!70] (-.6,-.34) -- ++(180:1.2) -- ++(-60:1.2) -- cycle;
            \end{scope}
            \begin{scope}[very thick, opacity=.5]
            \draw (0,0) -- ++(0:2.5);
            \draw (0,0) -- ++(60:2.5);
            \draw (0,0) -- ++(120:2.5);
            \draw (0,0) -- ++(180:2.5);
            \draw (0,0) -- ++(240:2.5);
            \draw (0,0) -- ++(300:2.5);
            \end{scope}
            \draw (0,0) ++ (1.2,.68) node[] {\tiny1|2|3};
            \draw[rotate=60] (0,0) ++ (1.2,.68) node[] {\tiny1|3|2};
            \draw[rotate=120] (0,0) ++ (1.2,.68) node[] {\tiny3|1|2};
            \draw[rotate=180] (0,0) ++ (1.2,.68) node[] {\tiny3|2|1};
            \draw[rotate=240] (0,0) ++ (1.2,.68) node[] {\tiny2|3|1};
            \draw[rotate=300] (0,0) ++ (1.2,.68) node[] {\tiny2|1|3};
            
            \draw (0,0) ++ (0:2.5) node[fill=white,circle,inner sep=.2] {\tiny12|3};
            \draw[rotate=60] (0,0) ++ (0:2.5) node[fill=white,circle,inner sep=.2] {\tiny1|23};
            \draw[rotate=120] (0,0) ++ (0:2.5) node[fill=white,circle,inner sep=.2] {\tiny13|2};
            \draw[rotate=180] (0,0) ++ (0:2.5) node[fill=white,circle,inner sep=.2] {\tiny3|12};
            \draw[rotate=240] (0,0) ++ (0:2.5) node[fill=white,circle,inner sep=.2] {\tiny23|1};
            \draw[rotate=300] (0,0) ++ (0:2.5) node[fill=white,circle,inner sep=.2] {\tiny2|13};
            
            \draw (0,0) node[fill=blue!50,circle,inner sep=0,scale=.8] {\tiny123};
        \end{tikzpicture}
        \hfill
        \begin{tikzpicture}[x=2.5em,y=2.5em,rotate=30,baseline=0]
            \begin{scope}[line width=6, rounded corners=1pt]
            \filldraw[blue!50] (0,0) -- ++(0:2) -- ++(120:2) -- cycle;
            \filldraw[red!60] (-.2,.34) -- ++(60:1.6) -- ++(180:1.6) -- cycle;
            \filldraw[orange!70] (.2,-.34) -- ++(-60:1.6) -- ++(60:1.6) -- cycle;
            \filldraw[magenta!70] (-.4,0) -- ++(180:1.6) -- ++(60:1.6) -- cycle;
            \filldraw[green!70!black!70] (-.2,-.34) -- ++(-60:1.6) -- ++(180:1.6) -- cycle;
            \filldraw[yellow!90!orange!70] (-.6,-.34) -- ++(180:1.2) -- ++(-60:1.2) -- cycle;
            \end{scope}
            \begin{scope}[very thick, opacity=.5]
            \draw (0,0) -- ++(0:2.5);
            \draw (0,0) -- ++(60:2.5);
            \draw (0,0) -- ++(120:2.5);
            \draw (0,0) -- ++(180:2.5);
            \draw (0,0) -- ++(240:2.5);
            \draw (0,0) -- ++(300:2.5);
            \end{scope}
            \draw (0,0) ++ (30:1.3) node[] {\footnotesize123};
            \draw[rotate=60] (0,0) ++ (30:1.3) node[] {\footnotesize132};
            \draw[rotate=120] (0,0) ++ (30:1.3) node[] {\footnotesize312};
            \draw[rotate=180] (0,0) ++ (30:1.3) node[] {\footnotesize321};
            \draw[rotate=240] (0,0) ++ (30:1.3) node[] {\footnotesize231};
            \draw[rotate=300] (0,0) ++ (30:1.3) node[] {\footnotesize213};
        \end{tikzpicture}
        \hfill
        \begin{tikzpicture}[x=2.5em,y=2.5em,rotate=30,baseline=0]
        \ytableausetup{smalltableaux}
            \begin{scope}[line width=6, rounded corners=1pt]
            \filldraw[blue!50] (0,0) -- ++(0:2) -- ++(120:2) -- cycle;!70!70
            \filldraw[red!50!magenta!50] (-.2,.34) -- ++(60:1.6) -- ++(180:1.6) -- ++(240:2) -- ++(0:1.6) -- cycle;
            \filldraw[orange!60!green!70] (.2,-.34) -- ++(0:1.6) -- ++(240:1.6) -- ++(180:2) -- ++(60:1.6) -- cycle;
            \filldraw[yellow!90!orange!70] (-.6,-.34) -- ++(180:1.2) -- ++(-60:1.2) -- cycle;
            \end{scope}
            \begin{scope}[very thick, opacity=.5]
            \draw (0,0) -- ++(0:2.5);
            \draw (0,0) -- ++(60:2.5);
            \draw (0,0) -- ++(120:2.5);
            \draw (0,0) -- ++(180:2.5);
            \draw (0,0) -- ++(240:2.5);
            \draw (0,0) -- ++(300:2.5);
            \end{scope}
            \node () at (1.2,.68) {\ytableaushort[*(white)]{123}};
            \node () at (-.8,1) {\ytableaushort[*(white)]{12,3}};
            \node () at (.4,-1) {\ytableaushort[*(white)]{13,2}};
            \node () at (-1.4,-.9) {\ytableaushort[*(white)]{1,2,3}};
        \end{tikzpicture}
    \caption{On the left, the $\finechamber(-)$ faces of $\square$ for $w=3$; in the middle, the $\chamber(-)$ chambers; on the right, the $\coarsechamber(-)$ subdivision.}
    \label{fig:subdivisions}
\end{figure}

Note also that since mapping $\pi$ to $\pi^{-1}$ interchanges the insertion and recording tableaux under RSK, we can write
\begin{equation}
\label{equation.Ch pi}
	\coarsechamber(Q) = \bigcup_{\substack{\pi \in S_w\\ \RSK(\pi)_1 = Q}} \chamber(\pi).
\end{equation}

While the interpretation of the faces of the Coxeter
complex are the integer point enumerators
of the quasi-symmetric functions \cite{BK,GPS,BJR},
here we glue faces together
to obtain Schur polynomials as integer point
enumerators of pieces of the Coxeter complex.

\begin{theorem}\label{theorem:integer_point_enumerator}
    Let $Q\in\SYT(\mu)$.
    The integer-point enumerator of $\coarsechamber(Q)$ at height $h$ is the Schur polynomial $s_{\mu}(x_0,\ldots,x_h)$.
\end{theorem}
\begin{proof} 
Fix $Q\in\SYT(\mu)$ and let $P=\coarsechamber(Q)$.
Recall that RSK gives a bijection between the integer points of $h\square$ and $\bigcup_{\lambda\vdash w} \SSYT_{h+1}(\lambda)\times\SYT(\lambda)$, where 
the first tableau is the insertion tableau and the second one is the recording tableau.
Given $v \in \chamber(\pi)$, let $x_v = x_{v_1}\cdots x_{v_w}$. Note that if $\RSK(v) = (T,Q)$ then $x_v = x_T$.
Since $hP\cap\ZZ^w$ is the preimage under RSK of $\SSYT_{h+1}(\mu)\times\{Q\}$
we can write
\[
\ipe_{P}(h;x_0,x_1,\ldots) = \sum_{v\in hP\cap \ZZ^w} x_{v} = \sum_{T\in\SSYT_{h+1}(\mu)} x_{T}.
\]
Hence
$\ipe_P(h;x_0,x_1,\ldots) = s_\mu(x_0,\ldots,x_h)$ by the combinatorial definition of Schur polynomials and Note~\ref{note:finitely many variables}.
\end{proof}

\begin{example}
    Let $w=4$ and see Figure \ref{fig:4d} for the chambers of (the front part of) $\square$. We can see $5$ of the coarse regions in the illustration. 
    Three coarse regions $\chamber(3124)\sqcup\chamber(1324)\sqcup\chamber(1324)$, 
    $\chamber(1432)\sqcup\chamber(4132)\sqcup\chamber(4312)$, 
    and $\chamber(1243)\sqcup\chamber(1423)\sqcup\chamber(1342)$ are composed of three chambers each, one coarse region 
    $\chamber(3142)\sqcup\chamber(3412)$ is composed of two chambers, and the remaining coarse region is $\chamber(1234)$.
\end{example}

\begin{figure}
    \centering
    \includegraphics[width=0.5\linewidth]{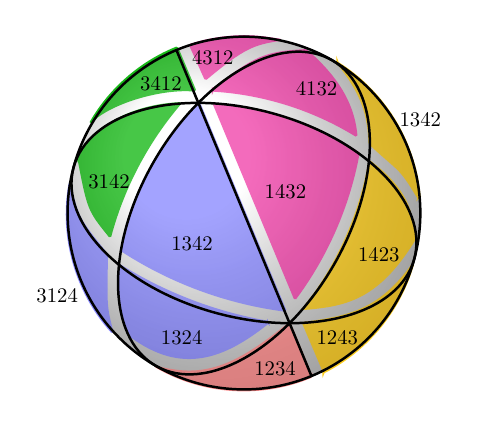}
    \caption{A generic slice of $[0,1]^4$ projected to a $3$-dimensional sphere. Illustrated is the front part of the sphere, containing $12$ chambers, 
    indexed by permutations of $S_4$. 
    }
    \label{fig:4d}
\end{figure}

\subsection{Quantum Ehrhart polynomials}

We now study the \emph{integer-point enumerator series}
$$\Ipe_P(z;x_0,x_1,\ldots) = 
\sum_{h\geqslant 0}\ipe_{P}(h;x_0,x_1, \ldots) z^h,$$
where $P$ is a polytope of the form $\finechamber(\gamma)$, $\chamber(\pi)$, or $\coarsechamber(Q)$.
In order to do this, it is convenient to consider the cone $\bigcup_{h\geqslant1} h\square = \R_{\geqslant1}^w$.
Define the \emph{principal chamber} $\triangle = \chamber(12\ldots w)$. For any chamber $\chamber(\pi)$, let 
$\chamber(\pi)_{\geqslant 1} = \bigcup_{h\geqslant 1} h\chamber(\pi)$. Explicitly, we can write 
$$\triangle_{\geqslant 1}\cap\ZZ^w = \mathbb{Z}_{\geqslant 0}(0,\ldots,0,1) + \mathbb{Z}_{\geqslant 0}(0,\ldots,0,1,1) 
+ \cdots + \mathbb{Z}_{\geqslant 0}(1,1,\ldots,1).$$
Any other $\chamber(\pi)_{\geqslant 1}$ is obtained by reflections and translations of $\triangle_{\geqslant 1}$.

\begin{lemma}\label{lem:chambers in terms of pral}
For any $\pi\in S_w$, we have
\[
\chamber(\pi)_{\geqslant 1}\cap\ZZ^w = p + \mathbb{Z}_{\geqslant 0}\pi(0,\ldots,0,1) + \mathbb{Z}_{\geqslant 0}\pi(0,\ldots,0,1,1) + \cdots 
+ \mathbb{Z}_{\geqslant 0}\pi(1,1,\ldots,1),
\]
where $p$ is the vector given by
\[
p_{\pi(1)} = 0 
~~\text{and}~~
p_{\pi(i+1)} = 
\begin{cases}
    p_{\pi(i)}   &\text{if }\pi(i) < \pi(i+1),\\
    p_{\pi(i)}+1 &\text{if }\pi(i) > \pi(i+1).
\end{cases}
\]
\end{lemma}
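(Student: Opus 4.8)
The plan is to reduce to the already-understood case of the principal chamber $\triangle = \chamber(12\ldots w)$ and then transport the description along the permutation $\pi$, keeping careful track of the half-open structure. First I would record the base case: for $\pi = 12\ldots w$ the vector $p$ produced by the recursion is $p = 0$ (since $\pi(i) < \pi(i+1)$ always), and the stated formula is exactly the displayed decomposition $\triangle_{\geqslant 1}\cap\ZZ^w = \ZZ_{\geqslant 0}(0,\ldots,0,1) + \cdots + \ZZ_{\geqslant 0}(1,\ldots,1)$ given just before the lemma. To see that decomposition is correct, note a point $v \in \ZZ^w$ lies in $\triangle_{\geqslant 1}$ iff $1 \leqslant v_1 \leqslant v_2 \leqslant \cdots \leqslant v_w$, and writing $v = v_1(1,\ldots,1) + (v_2-v_1)(0,1,\ldots,1) + \cdots + (v_w - v_{w-1})(0,\ldots,0,1)$ gives a (clearly unique) expression with nonnegative integer coefficients, with $v_1 \geqslant 1$ accounting for the generator $(1,\ldots,1)$ being used at least once — which is why the cone is $\R^w_{\geqslant 1}$, not $\R^w_{\geqslant 0}$, and why there is no stray $+\,p$ term here.

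Next I would handle a general $\pi$ by relabeling coordinates. Applying $\pi$ to coordinates sends $\triangle_{\geqslant 1}$ to the set $\{v : 1 \leqslant v_{\pi(1)} \leqslant v_{\pi(2)} \leqslant \cdots \leqslant v_{\pi(w)}\}$, whose generators are the claimed $\pi(0,\ldots,0,1), \ldots, \pi(1,\ldots,1)$. However, this is the \emph{closed} chamber condition; $\chamber(\pi)_{\geqslant 1}$ requires strict inequality $v_{\pi(i)} < v_{\pi(i+1)}$ precisely at the descents $i \in \Des(\pi)$, i.e.\ when $\pi(i) > \pi(i+1)$. So the real content is: intersecting the closed simplicial cone with these strictness conditions shifts the lattice-point set by exactly the vector $p$ defined in the statement. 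I would prove this by the substitution $v = p + u$: a lattice point $v$ with $1 \leqslant v_{\pi(1)} \leqslant \cdots \leqslant v_{\pi(w)}$ satisfies all the required strict inequalities iff for every $i$ with $\pi(i) > \pi(i+1)$ we have $v_{\pi(i+1)} \geqslant v_{\pi(i)} + 1$. Since the differences $v_{\pi(i+1)} - v_{\pi(i)}$ along the chain are exactly the coefficients in the unique conic expansion, the constraint is that the coefficient attached to the generator $\pi(0,\ldots,0,1,\ldots,1)$ (with $i$ ones removed, so to speak) is $\geqslant 1$ at each descent step and $\geqslant 0$ otherwise, plus the leading constraint $v_{\pi(1)} \geqslant 1$ on the coefficient of $\pi(1,\ldots,1)$. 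Forcing those coefficients to be $\geqslant 1$ is the same as fixing their minimum contribution, and summing the corresponding minimal generators telescopes to give exactly the vector $p$ with $p_{\pi(1)} = 0$ and $p_{\pi(i+1)} = p_{\pi(i)} + [\pi(i) > \pi(i+1)]$; the residual point $u = v - p$ then ranges freely over $\ZZ_{\geqslant 0}$-combinations of the same generators.

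The routine part is the telescoping identity showing that the sum of the ``mandatory'' generators equals $p$; I would verify it by induction on $i$, checking that the $\pi(i)$-th coordinate of the partial sum matches the recursion for $p_{\pi(i)}$. The main obstacle — really the only place one must be careful — is bookkeeping the half-open (strict vs.\ weak) inequalities: one must confirm that after the shift by $p$ the \emph{weak} inequalities $v_{\pi(i)} \leqslant v_{\pi(i+1)}$ at ascents impose no lower bound beyond $u \geqslant 0$, and that no strict inequality is accidentally lost or double-counted, so that the shifted set is genuinely the \emph{full} (unbounded-in-$u$) translated cone rather than a proper subset. Once the inequality translation is pinned down, uniqueness of the conic expansion (from the simplicial/unimodular structure of $\triangle$, inherited by $\pi\cdot\triangle$) finishes the argument.
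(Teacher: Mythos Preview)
Your approach is essentially the paper's own (the paper gives only a two-sentence sketch: reflect the principal chamber to the Weyl chamber for $\pi$, then observe that $p$ is the minimal integer point of the half-open cone), and your expanded argument via the change of variables $v = p + u$ and the telescoping identification of $p$ is correct in substance.

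There is, however, one small slip to fix. You write that the base case has $v_1 \geqslant 1$ and that ``the generator $(1,\ldots,1)$ is used at least once''; later you repeat this as ``the leading constraint $v_{\pi(1)} \geqslant 1$''. This is not right: $\chamber(\pi)_{\geqslant 1} = \bigcup_{h\geqslant 1} h\chamber(\pi)$ sits inside $\bigcup_{h\geqslant 1}[0,h]^w = \R_{\geqslant 0}^w$, so the constraint is $v_{\pi(1)}\geqslant 0$ (the paper's ``$\R_{\geqslant 1}^w$'' is a typo, as its own displayed formula $\triangle_{\geqslant 1}\cap\ZZ^w = \ZZ_{\geqslant 0}(0,\ldots,0,1)+\cdots+\ZZ_{\geqslant 0}(1,\ldots,1)$, which contains $0$, makes clear). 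Were the coefficient of $\pi(1,\ldots,1)$ really forced to be $\geqslant 1$, the shift vector would acquire an extra $(1,\ldots,1)$ and you would get $p_{\pi(1)}=1$, contradicting the recursion you (correctly) use. Once you set the lower bound to $0$, your coefficient analysis reads cleanly: $c_w = v_{\pi(1)}\geqslant 0$ and $c_{w-j}=v_{\pi(j+1)}-v_{\pi(j)}\geqslant [\,j\in\Des(\pi)\,]$, whose minimal values sum to exactly the stated $p$.
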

\begin{proof}
We can obtain the Weyl chamber $\overline{\chamber(\pi)}$ from the principal Weyl chamber $\triangle$ by successive reflections with respect to the hyperplanes of the braid arrangement. The vector $p$ from the statement of the lemma is the smallest possible integer vector in $\chamber(\pi)_{\geqslant 1}$ by construction.
\end{proof}

If $P$ is a $w$-dimensional polytope, $\ehr_P(h) = \#(hP\cap\ZZ^w)$ and the \emph{Ehrhart series} is $\Ehr_P(z) = \sum_{h\geqslant 0} \ehr_P(h)z^h$.

For the purpose of studying plethysm coefficients, it is useful to introduce a $q$-analogue of Ehrhart theory, first considered by Chapoton 
in \cite{Chapoton}. Let $g\in\R^w$. The \emph{$q$-Ehrhart polynomial} of $P$ with respect to the grading $g$ is
\[
\qehr^g_P(h,q) = \sum_{v\in hP\cap\ZZ^w} q^{\langle g,v\rangle},
\]  
and the \emph{$q$-Ehrhart series} of $P$ is $\qEhr^g_P(z,q) = \sum_{h\geqslant 0}\qehr_P^g(h,q)z^h$. 
The $q$-Ehrhart theory is developed further in \cite{BK-Chapoton}; see also \cite{RR24, DLVVW.2024} for other related generalizations.

\begin{proposition}
    Let $u = (1,1,\ldots,1)$. We have
    $\qEhr^u_P(z,q) = \Ipe_P(z;1,q,q^2,\ldots).$
\end{proposition}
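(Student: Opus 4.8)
The plan is to unwind both sides directly from their definitions and observe that they agree term by term in the variable $z$. First I would write the left-hand side as
\[
\qEhr^u_P(z,q) = \sum_{h\geqslant 0} \qehr^u_P(h,q)\, z^h = \sum_{h\geqslant 0} \left(\sum_{v\in hP\cap\ZZ^w} q^{\langle u,v\rangle}\right) z^h,
\]
using the definitions of the $q$-Ehrhart series and the $q$-Ehrhart polynomial. Since $u = (1,1,\ldots,1)$, we have $\langle u, v\rangle = v_1 + v_2 + \cdots + v_w$, so $q^{\langle u, v\rangle} = q^{v_1} q^{v_2} \cdots q^{v_w}$.

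Next I would compare this with the right-hand side. By definition of the integer-point enumerator series,
\[
\Ipe_P(z; x_0, x_1, \ldots) = \sum_{h\geqslant 0} \ipe_P(h; x_0, x_1, \ldots)\, z^h = \sum_{h\geqslant 0}\left(\sum_{v\in hP\cap\ZZ^w} x_{v_1} x_{v_2}\cdots x_{v_w}\right) z^h.
\]
Specializing $x_i \mapsto q^i$ for each $i\geqslant 0$ turns the monomial $x_{v_1} x_{v_2}\cdots x_{v_w}$ into $q^{v_1} q^{v_2}\cdots q^{v_w} = q^{v_1+\cdots+v_w}$, which is exactly the summand appearing on the left. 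The only mild point to check is that this specialization is well-defined: for $P$ equal to $\finechamber(\gamma)$, $\chamber(\pi)$, or $\coarsechamber(Q)$, all entries of any $v\in hP\cap\ZZ^w$ are nonnegative integers in the range $\{0,1,\ldots,h\}$ (since $P\subseteq\square$), so $\ipe_P(h; x_0, x_1, \ldots)$ is a genuine polynomial in $x_0,\ldots,x_h$, and the substitution $x_i\mapsto q^i$ makes sense coefficient-wise; summing over $h$ then produces a well-defined formal power series in $z$ with coefficients in $\ZZ[q]$.

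Finally I would conclude by matching coefficients of $z^h$ on both sides: for each fixed $h$ we have shown
\[
\qehr^u_P(h,q) = \sum_{v\in hP\cap\ZZ^w} q^{v_1+\cdots+v_w} = \ipe_P(h; 1, q, q^2, \ldots),
\]
and hence the two generating functions in $z$ coincide. There is essentially no obstacle here; the statement is a bookkeeping identity, and the only thing worth stating carefully is the finiteness/well-definedness remark above, which is why the proposition is restricted to polytopes sitting inside the unit cube $\square$. I would keep the written proof to a few lines.
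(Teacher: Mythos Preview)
Your proposal is correct and is exactly the approach the paper takes: the paper's entire proof reads ``Follows straight from the definitions,'' and you have simply written out those definitions and matched the summands, with the added (reasonable) remark on well-definedness of the specialization. Nothing more is needed.
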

\begin{proof}
    Follows straight from the definitions.
\end{proof}
To apply the theory to characters of $\SL_2$, which are using centered Laurent polynomials in $q$, we introduce the following definitions.

\begin{de}
\label{definition.P tilde}
    Let $u = (1,\ldots,1)\in\ZZ^w$.
    Given $P$ a $w$-dimensional polytope, let $\tilde{P} = P-\frac{1}{2}u$.
    Define the \emph{quantum Ehrhart polynomial} of $P$ as 
    $$\Qehr_P(h,q) = \sum_{v\in h\tilde{P}\cap\mathcal{L}_h} q^{2\langle u,v\rangle}$$
    and the \emph{quantum Ehrhart series} of $P$ as $\QEhr_P(z,q) = \sum_{h\geqslant 0} \Qehr_P(h,q)z^h$, where $\mathcal{L}_h$ is the 
    lattice $(\frac{h}{2}+\ZZ)^w$.
\end{de}
\begin{lemma}\label{lem:from qEhr to QEhr}
    Let $P$ be a $w$-dimensional polytope contained in $\square$. Then,
    $\Qehr_P(h,q) = \qehr_P^u(h,q^2)/q^{wh}$ and $\QEhr_P(z,q) = \qEhr_P^u\big(\frac{z}{q^w},q^2\big)$.
\end{lemma}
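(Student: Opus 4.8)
The plan is to reduce the whole statement to a single affine shift of coordinates, $v \mapsto v + \tfrac{h}{2}u$, which matches up the lattice points counted by $\Qehr_P$ with those counted by $\qehr^u_P$. First I would record the two elementary facts about this shift. Since $h\tilde P = h(P - \tfrac12 u) = hP - \tfrac{h}{2}u$, a vector $v$ lies in $h\tilde P$ if and only if $v' := v + \tfrac{h}{2}u$ lies in $hP$. And since $\mathcal L_h = (\tfrac{h}{2}+\ZZ)^w$ is described coordinatewise, $v \in \mathcal L_h$ if and only if each coordinate $v_i + \tfrac{h}{2}$ lies in $\ZZ$, i.e. if and only if $v' \in \ZZ^w$. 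Hence $v \mapsto v'$ is a bijection from $h\tilde P \cap \mathcal L_h$ to $hP \cap \ZZ^w$. Because $P \subseteq \square$, both sets are finite, so there are no convergence issues in what follows.

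Next I would track the exponent under this bijection. Using $\langle u, u\rangle = w$, for $v' = v + \tfrac{h}{2}u$ we have
\[
2\langle u, v\rangle = 2\langle u, v'\rangle - h\langle u, u\rangle = 2\langle u, v'\rangle - hw.
\]
Substituting into the definition of $\Qehr_P(h,q)$ and pulling the common factor $q^{-hw}$ out of the sum gives
\[
\Qehr_P(h,q) = \sum_{v' \in hP\cap\ZZ^w} q^{\,2\langle u, v'\rangle - hw} = q^{-hw}\sum_{v' \in hP\cap\ZZ^w} (q^2)^{\langle u, v'\rangle} = q^{-hw}\,\qehr^u_P(h,q^2),
\]
which is the first assertion. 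For the generating-function statement I would then just multiply by $z^h$ and sum over $h \geqslant 0$:
\[
\QEhr_P(z,q) = \sum_{h\geqslant 0} q^{-hw}\,\qehr^u_P(h,q^2)\, z^h = \sum_{h\geqslant 0} \qehr^u_P(h,q^2)\,(z q^{-w})^h = \qEhr^u_P\!\big(z q^{-w}, q^2\big),
\]
as claimed.

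There is no real obstacle here; the only point that warrants a line of care is checking that the shift $v \mapsto v + \tfrac{h}{2}u$ respects the refined lattice $\mathcal L_h$ and not merely $\ZZ^w$, but this is immediate from the coordinatewise description of $\mathcal L_h$ together with the fact that the shift is by $\tfrac{h}{2}$ in every coordinate (and $\tfrac{h}{2} + (\tfrac{h}{2}+\ZZ) = \ZZ$). The statement is, in the end, a bookkeeping identity, and I expect the write-up to be short.
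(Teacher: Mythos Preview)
Your proposal is correct and follows essentially the same approach as the paper: both use the affine shift $v \mapsto v + \tfrac{h}{2}u$ to identify $h\tilde P \cap \mathcal L_h$ with $hP \cap \ZZ^w$, compute $2\langle u,v\rangle = 2\langle u,v'\rangle - wh$, and then sum over $h$ to deduce the series identity. The paper's version is just more terse, performing the change of variables directly inside the summation without separately articulating the bijection or the lattice check.
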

\begin{proof}
    We compute directly
    \begin{align*}
        \Qehr_P(h,q) &= \sum_{v\in h\tilde{P}\cap(\frac{h}{2}+\ZZ)^w} q^{2\langle u,v\rangle} 
        = \sum_{v\in hP\cap\ZZ^w} q^{2\langle u,v-\frac{h}{2}u\rangle}\\
        &= \sum_{v\in hP\cap\ZZ^w} q^{2\langle u,v\rangle-wh}
        = q^{-wh} \qehr_P^u(h,q^2),
    \end{align*}
    from which the second result follows.
\end{proof}

Collecting results from above, the quantum Ehrhart series of $\coarsechamber(Q)$ is a generating function for plethysm of Schur polynomials.
Indeed, let $\mu = \shape(Q)$ be a partition of $w$ and recall Note~\ref{note:finitely many variables} to obtain
\begin{align*}
    \QEhr_{\coarsechamber(Q)}(z,q)  &=\Ipe_{\coarsechamber(Q)}(z/q^w;1,q^2,q^4,\ldots) \\&=
    \sum_{h\geqslant 0}\ipe_{\coarsechamber(Q)}(h;1,q^2,\ldots)z^h/q^{wh}\\&=
    \sum_{h\geqslant 0}s_\mu(1,q^2,\ldots,q^{2h})z^h/q^{wh}\\&=
    \sum_{h\geqslant 0}s_\mu(q^{-h},q^{2-h},\ldots,q^{h}) z^h\\&=
    \sum_{h\geqslant 0}s_\mu[s_h](q^{-1},q) z^h.
\end{align*}
When $\mu=(w)$, this generating function is well-understood.

\begin{theorem}[Heine's formula]
    Let $Q = \ytableaushort{12{\cdots}w}$ so that $\coarsechamber(Q) = \triangle$. Then, 
    $$\QEhr_\triangle(z,q) = \sum_{h\geqslant 0} \Qbinom{w+h}{w} z^h = \prod_{i=0}^w \frac{1}{1-q^{w-2i}z}.$$
\end{theorem}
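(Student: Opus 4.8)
The plan is to prove the two asserted equalities in turn; with the groundwork already laid, each is short.

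For the first equality I would simply specialize the chain of identities established immediately before the statement. Taking $Q$ to be the one-row standard tableau --- so that $\shape(Q) = (w)$ and $\coarsechamber(Q) = \triangle$, as the statement records --- that computation reads $\QEhr_\triangle(z,q) = \sum_{h\geqslant 0} s_{(w)}[s_h](q^{-1},q)\,z^h$. The classical evaluation $s_w[s_h](q^{-1},q) = \Qbinom{w+h}{w}$ (see \cite{Macdonald}, recalled above) then gives $\QEhr_\triangle(z,q) = \sum_{h\geqslant 0}\Qbinom{w+h}{w}z^h$.

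For the second equality I would compute the generating function geometrically. By Lemma~\ref{lem:from qEhr to QEhr} it is enough to evaluate the ordinary $q$-Ehrhart series $\qEhr^u_\triangle(z,q)$ and then substitute $q\mapsto q^2$, $z\mapsto z/q^w$. Applying Lemma~\ref{lem:chambers in terms of pral} to the identity permutation $\pi = 12\cdots w$ (which has no descents, so the offset $p$ vanishes), every lattice point of $\triangle_{\geqslant 1}$ is uniquely $v = \sum_{j=1}^w a_j e_j$ with $a_j\in\ZZ_{\geqslant 0}$, where $e_j$ has a $1$ in each of its last $j$ coordinates. Then the largest coordinate of $v$ is $v_w = a_1+\cdots+a_w$, so $v\in h\triangle$ iff $a_1+\cdots+a_w\leqslant h$, while $\langle u,e_j\rangle = j$ gives $\langle u,v\rangle = \sum_j j a_j$. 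Hence
\[
\qEhr^u_\triangle(z,q) \;=\; \sum_{h\geqslant 0} z^h\!\!\sum_{\substack{a\in\ZZ_{\geqslant 0}^w\\ a_1+\cdots+a_w\leqslant h}}\!\! q^{\sum_j j a_j} \;=\; \sum_{a\in\ZZ_{\geqslant 0}^w} q^{\sum_j j a_j}\!\!\sum_{h\geqslant a_1+\cdots+a_w}\!\!z^h \;=\; \frac{1}{1-z}\prod_{j=1}^w\frac{1}{1-q^j z},
\]
first summing the geometric series in $h$ and then factoring the sum over $a$ into geometric series in each $a_j$. The right side is $\prod_{j=0}^w(1-q^jz)^{-1}$, and the substitution from Lemma~\ref{lem:from qEhr to QEhr} turns it into $\prod_{j=0}^w(1-q^{2j-w}z)^{-1} = \prod_{i=0}^w(1-q^{w-2i}z)^{-1}$ after reindexing $i = w-j$. (Equivalently, this last identity is Heine's $q$-binomial theorem applied after rewriting $\Qbinom{w+h}{w}$ as $q^{-wh}$ times an ordinary $q^2$-binomial coefficient --- whence the name of the theorem.)

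The computation is routine; the places that need care are the bookkeeping of the two rescalings coming out of Lemma~\ref{lem:from qEhr to QEhr} (the $q\mapsto q^2$ and the centering shift $z\mapsto z/q^w$), the interchange of the sums over $h$ and over $a$ (legitimate as a formal identity in $z$, since only finitely many $a$ contribute to each coefficient), and the observation that the ``height'' factor $\frac{1}{1-z}$ is exactly the missing $i=0$ term of the product --- i.e., that once the grading variable $z$ is remembered, $\triangle$ becomes a unimodular cone with generators $e_1,\dots,e_w$ together with a height generator. I do not anticipate a genuine obstacle beyond this bookkeeping.
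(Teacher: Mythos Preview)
Your argument is correct. The paper's own proof is a one-line citation: it invokes Heine's classical $q$-binomial summation and says the stated identity follows after a reparametrization, pointing to \cite{Heine}, \cite{QuantumCalculus}, and \cite{BK-Chapoton}. You take a genuinely different route for the product formula: rather than quoting the hypergeometric identity, you stay inside the geometric framework the paper has built, parametrize the lattice points of $\triangle_{\geqslant 1}$ by the unimodular generators $e_1,\ldots,e_w$, sum the resulting geometric series to get $\qEhr_\triangle^u(z,q)=\prod_{j=0}^w(1-q^jz)^{-1}$, and then apply Lemma~\ref{lem:from qEhr to QEhr}. What the paper's approach buys is brevity and a pointer to the classical literature (and it explains the attribution in the theorem's name); what your approach buys is self-containment and a demonstration that the product formula is already implicit in the simplicial description of $\triangle$, with no outside input needed beyond geometric series. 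Your bookkeeping is fine; the only small remark is that the first equality --- $\QEhr_\triangle(z,q)=\sum_h\Qbinom{w+h}{w}z^h$ --- is indeed immediate from the displayed computation preceding the theorem together with the Macdonald identity recalled in the text, so that half of your argument matches the paper's implicit reasoning exactly.
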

\begin{proof}
     It follows from \cite{Heine} after a straightforward reparametrization (see also \cite{QuantumCalculus}, \cite[\S2.3]{BK-Chapoton}).
\end{proof}

For any other partition $\mu$ of $w$, we can compute the quantum Ehrhart series explicitly. 
The right hand equality in the next theorem appears in the literature in~\cite{GOS},~\cite[Thm.~5.3]{GesselReutenauer1993}, 
and~\cite[\S2]{ShareshianWachs2010}. By relating it to the quantum Ehrhart series, we give a new geometric perspective to this identity.

\begin{theorem}\label{th:grsw_gf}
    Let $Q\in\SYT(\mu)$ and let $P = \coarsechamber(Q)$.  Then 
    $$\QEhr_P(z,q) = \sum_{h\geqslant 0} s_\mu[s_h](q^{-1},q) z^h =  \frac{\sum_{T\in\SYT(\mu)} q^{-2\maj(T)} (zq^w)^{\des(T)}}{\prod_{i=0}^w(1-q^{w-2i}z)}.$$
\end{theorem}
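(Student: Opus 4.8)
The plan is to split $\coarsechamber(Q)$ into the half-open chambers it contains, compute the quantum Ehrhart series of each one from its translated unimodular-cone description, and add up. The first equality in the statement is precisely the chain of identities displayed just above Heine's formula, so only the rational-function formula needs an argument.

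First I would use~\eqref{equation.Ch pi} to write $\coarsechamber(Q)$ as the disjoint union of the chambers $\chamber(\pi)$ over all $\pi\in S_w$ with $\RSK(\pi)_1=Q$. Integer-point enumerators, and hence quantum Ehrhart series, are additive over disjoint unions, so $\QEhr_{\coarsechamber(Q)}(z,q)=\sum_{\pi:\RSK(\pi)_1=Q}\QEhr_{\chamber(\pi)}(z,q)$. Since $\pi\mapsto\RSK(\pi)_2$ is a bijection from $\{\pi:\RSK(\pi)_1=Q\}$ onto $\SYT(\mu)$, where $\mu=\shape(Q)$, and the descent set of the word $\pi$ equals the descent set of its recording tableau (a standard property of RSK, see e.g.~\cite{EC2}), it suffices to prove that for each such $\pi$ with recording tableau $T$ one has
\[
\QEhr_{\chamber(\pi)}(z,q)=\frac{q^{-2\maj(T)}(zq^w)^{\des(T)}}{\prod_{i=0}^w(1-q^{w-2i}z)},
\]
and then sum over $T\in\SYT(\mu)$.

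For a single chamber I would compute directly from the cone description. By Lemma~\ref{lem:from qEhr to QEhr} and the definitions, $\QEhr_{\chamber(\pi)}(z,q)=\sum_{h\geqslant0}\sum_{v\in h\chamber(\pi)\cap\ZZ^w}q^{2\langle u,v\rangle}(zq^{-w})^h$, and Lemma~\ref{lem:chambers in terms of pral} describes the lattice points of $\chamber(\pi)_{\geqslant1}$ as $p+\sum_{j=1}^w\mathbb{Z}_{\geqslant0}\,g_j$, where $g_j=\pi(0,\dots,0,1,\dots,1)$ has $j$ ones and $p$ is the stated apex. A point $v=p+\sum_jc_jg_j$ lies in $h\chamber(\pi)$ precisely when $h$ is at least its largest coordinate $v_{\pi(w)}=\des(\pi)+\sum_jc_j$, so the geometric series over $h$ contributes $(zq^{-w})^{\des(\pi)+\sum_jc_j}/(1-zq^{-w})$; using $\langle u,g_j\rangle=j$ and $\langle u,p\rangle=\sum_kp_k=w\,\des(\pi)-\maj(\pi)$ and summing the remaining geometric series over each $c_j\geqslant0$ gives
\[
\QEhr_{\chamber(\pi)}(z,q)=\frac{q^{2(w\des(\pi)-\maj(\pi))}(zq^{-w})^{\des(\pi)}}{(1-zq^{-w})\prod_{j=1}^w\bigl(1-q^{2j}zq^{-w}\bigr)},
\]
which, after reindexing $j\mapsto w-i$ and absorbing $1-zq^{-w}$ as the $i=0$ factor, is exactly $q^{-2\maj(\pi)}(zq^w)^{\des(\pi)}/\prod_{i=0}^w(1-q^{w-2i}z)$. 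Since $\des(\pi)=\des(T)$ and $\maj(\pi)=\maj(T)$, summing over $T\in\SYT(\mu)$ yields the theorem.

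The step I expect to require the most care is the exponent bookkeeping: proving $\sum_kp_k=w\,\des(\pi)-\maj(\pi)$ from the recursive definition of $p$ (a double count of the descents of $\pi$ weighted by distance from the end) and chasing the substitutions $x_i\mapsto q^{2i}$ and $z\mapsto zq^{-w}$ so that the denominator closes up as $\prod_{i=0}^w(1-q^{w-2i}z)$. One should also keep the half-open conventions in mind at $h=0$: the only lattice point with $v_{\pi(w)}=0$ is the origin, which occurs only when $\pi$ is the identity, where the $h=0$ contribution is the expected constant term, so no separate treatment is needed. The single-chamber computation is in effect a geometric proof of the classical principal specialization of a fundamental quasisymmetric function — the identity being reinterpreted here — and one could instead import it from~\cite{GesselReutenauer1993, ShareshianWachs2010} after invoking $\ipe_{\chamber(\pi)}(h;\cdot)=F_{\Des(\pi)}$.
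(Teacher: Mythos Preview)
Your proof is correct and follows the same route as the paper's: decompose $\coarsechamber(Q)$ into chambers, compute each $\QEhr_{\chamber(\pi)}$ from the cone description of Lemma~\ref{lem:chambers in terms of pral} using $\langle u,p\rangle = w\,\des(\pi)-\maj(\pi)$, and reindex the sum by $\SYT(\mu)$. The only cosmetic differences are that the paper invokes Heine's formula for $\QEhr_\triangle$ and a shift rather than summing the geometric series directly, and it passes to $T\in\SYT(\mu)$ via the insertion tableau of the standard point in each chamber rather than via the recording tableau of $\pi$ (these give the same $T$).
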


\begin{remark}
\label{remark.QEhr partition}
Since the right hand side of the expression in Theorem \ref{th:grsw_gf} is independent
of the choice of standard tableaux $Q$, we may define
$\QEhr_\mu(z,q) := \QEhr_{\coarsechamber(Q)}(z,q)$ for any $Q \in \SYT(\mu)$.
\end{remark}
\begin{proof}
    The polytope $P$ is a union of chambers $\bigcup \chamber(\pi)$ indexed by some permutations $\pi$.
    For any choice of $\pi$,
    Lemma \ref{lem:chambers in terms of pral} describes a point $p$ such that $h\chamber(\pi)\cap\ZZ^w = p + h'\cdot\pi(\triangle)$ for some $h'$.
    The precise value of $h'$ can be computed as $h - \max_i p_i = h-\des(\pi)$ by construction of $p$. Since $\ipe_\triangle(h;x_0,x_1,\ldots)$ is the symmetric polynomial $s_{(w)}(x_0,\ldots,x_h)$ and for any polytope $R$
    \[
    \QEhr_R(z,q) =
    \qEhr_R^u(z/q^w,q^2) =
    \sum_{h\ge0}\ipe_R(h;1,q^2,q^4,\ldots){z^h}/{q^{wh}},
    \]
    we deduce $\QEhr_{\pi R} = \QEhr_R$. Hence
    \begin{align*}
        \QEhr_{\chamber(\pi)}^u(z,q)
        &=
        (z/q^w)^{\des(\pi)} q^{2\langle u,p\rangle} \QEhr_{\triangle}^u(z,q).
    \end{align*}
    By direct computation,
    \[
    \langle u,p\rangle =
    \sum_{i\in\Des(\pi)} (w-i) = 
    w\cdot\des(\pi) - \sum_{i\in\Des(\pi)} i = 
    w\cdot\des(\pi) - \maj(\pi).
    \]
    Therefore, \[
    (z/q^w)^{\des(\pi)} q^{2\langle u,p\rangle}= 
    z^{\des(\pi)}
    q^{w\cdot\des(\pi) - 2\maj(\pi)}.
    \]
    
    Recall that each chamber $\chamber(\pi)_{\geqslant 1}$ contains exactly one \emph{standard point} $v_\text{std}$, characterized by the 
    property that its insertion tableau $T(v_\text{std})$ is standard.
    Note that $\maj(T) = \maj(\pi)$, and $\des(T)=\des(\pi)$.
    Moreover, since $\SYT(\mu)$ is contained in $\SSYT_{[0,h]}(\mu)$ for $h\geqslant w$, every standard tableau arises this way exactly once. 
    Summing over all chambers that compose $P$, we obtain
    \begin{align*}
        \QEhr_P(z,q) &= \QEhr_\triangle(z,q) \sum_{\pi}q^{-2\maj(\pi)} (zq^w)^{\des(\pi)}\\
        &= \QEhr_\triangle(z,q) \sum_{\substack{v_\text{std}\in P\\\text{standard}}}q^{-2\maj(T(v_\text{std}))} (zq^w)^{\des(T(v_\text{std}))}\\
        &= \QEhr_\triangle(z,q) \sum_{T\in\SYT(\mu)}q^{-2\maj(T)} (zq^w)^{\des(T)}. \qedhere
    \end{align*}
\end{proof}
Note that we can reprove the Carlitz identity \cite{Carlitz} (already found in \cite[II.IV, \S462]{MacMahon}; see also \cite[\S2.1]{BK-Chapoton}). 
We state it with the quantum specialization.
\begin{theorem}[Carlitz identity]
    We have 
    $$\QEhr_\square(z,q) =  \frac{\sum_{\pi\in S_w} q^{-2\maj(\pi)} (zq^w)^{\des(\pi)}}{\prod_{i=0}^w(1-q^{w-2i}z)}.$$
\end{theorem}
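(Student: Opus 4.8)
The plan is to read off the Carlitz identity as the ``global'' instance of the chamber computation already done inside the proof of Theorem~\ref{th:grsw_gf}. The one structural point I need is that the half-open chambers tile the cube: every $v\in\square$ lies in exactly one $\chamber(\pi)$, namely the $\pi$ that sorts the coordinates of $v$ weakly increasingly, breaking ties between equal coordinates in favour of the smaller index. Checking the defining inequalities of $\chamber(\pi)$ against this tie-breaking rule shows that this $\pi$ is forced and is the only one that works, so $\square=\bigsqcup_{\pi\in S_w}\chamber(\pi)$ is a disjoint union of half-open polytopes (this holds at every height $h\geqslant 0$, the point $0$ sitting only in $\chamber(12\ldots w)$). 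Since the pieces are disjoint with union $\square$, the integer-point enumerators, and hence the quantum Ehrhart series, simply add:
\[
\QEhr_\square(z,q)=\sum_{\pi\in S_w}\QEhr_{\chamber(\pi)}(z,q).
\]

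Next I would invoke the per-chamber formula extracted in the proof of Theorem~\ref{th:grsw_gf}: using Lemma~\ref{lem:chambers in terms of pral} to write $h\chamber(\pi)\cap\ZZ^w=p+h'\pi(\triangle)$ with $h'=h-\des(\pi)$ and $\langle u,p\rangle=w\,\des(\pi)-\maj(\pi)$, together with the reflection invariance $\QEhr_{\pi R}=\QEhr_R$, one gets
\[
\QEhr_{\chamber(\pi)}(z,q)=z^{\des(\pi)}q^{\,w\,\des(\pi)-2\maj(\pi)}\,\QEhr_\triangle(z,q)=q^{-2\maj(\pi)}(zq^w)^{\des(\pi)}\,\QEhr_\triangle(z,q).
\]
Summing over $\pi\in S_w$ and pulling out the common factor $\QEhr_\triangle(z,q)$ yields
\[
\QEhr_\square(z,q)=\QEhr_\triangle(z,q)\sum_{\pi\in S_w}q^{-2\maj(\pi)}(zq^w)^{\des(\pi)},
\]
and Heine's formula gives $\QEhr_\triangle(z,q)=\prod_{i=0}^w(1-q^{w-2i}z)^{-1}$, which is exactly the denominator in the statement; this completes the proof.

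There is essentially no genuine obstacle here: the only item deserving a sentence of care is the disjointness-and-completeness of the half-open chamber tiling, so that the enumerators really add with no double counting on the braid hyperplanes, and the remark that the $\des$ and $\maj$ in the chamber formula are those of $\pi$ itself rather than $\pi^{-1}$ — which is irrelevant to the final identity since we sum over all of $S_w$. As a sanity check / alternative route, one can instead sum Theorem~\ref{th:grsw_gf} over all shapes $\mu\vdash w$, each with multiplicity $f^\mu=\#\SYT(\mu)$, and use RSK together with the equidistribution of $(\des,\maj)$ under inversion to recognize $\sum_{\mu\vdash w}f^\mu\sum_{T\in\SYT(\mu)}q^{-2\maj(T)}(zq^w)^{\des(T)}=\sum_{\pi\in S_w}q^{-2\maj(\pi)}(zq^w)^{\des(\pi)}$.
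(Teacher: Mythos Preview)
Your proof is correct and follows essentially the same route as the paper: decompose $\square$ as the disjoint union $\bigcup_{\pi\in S_w}\chamber(\pi)$, use the per-chamber formula $\QEhr_{\chamber(\pi)}(z,q)=q^{-2\maj(\pi)}(zq^w)^{\des(\pi)}\QEhr_\triangle(z,q)$ established inside the proof of Theorem~\ref{th:grsw_gf}, and plug in Heine's formula for $\QEhr_\triangle$. The paper's proof is a one-liner invoking exactly this decomposition and ``the previous theorem''; you have simply unpacked that reference with more care (the tiling verification and the $\des/\maj$ remark), and your alternative route via $\sum_\mu f^\mu\,\QEhr_\mu$ is a valid second check.
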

\begin{proof}
    It follows from 
    $$\square = \bigcup_{\substack{\mu\vdash w\\ Q\in\SYT(\mu)}} \coarsechamber(Q) = \bigcup_{\pi\in S_w} \chamber(\pi)$$
    and the previous theorem.
\end{proof}

\subsection{MacMahon's combinatory analysis}
\label{ss:MacMahon}

MacMahon's partition analysis~\cite{MacMahon} was introduced in 1916 in the context of partitions
and turns out to be of great importance for the analysis of generating functions, see for 
example~\cite{APR-Omega.2001, APR.2001, Xin.2004, BZ-Omega}.
In this section, we show that the generating function $A_\mu(z,q)$ of $\SL_2$-plethysm coefficients defined
in~\eqref{equation.Amu} is related to the quantum Ehrhart series via the positive term operator.

MacMahon's Omega operator $\Omega^{(\alpha_1,\ldots,\alpha_r)}_{\geqslant}=\Omega_{\geqslant}$ is defined as 
\[
	\Omega^{(\alpha_1,\ldots,\alpha_r)}_{\geqslant}\sum_{s_1=-\infty}^\infty \cdots \sum_{s_r=-\infty}^\infty 
	A_{s_1,\ldots,s_r} \alpha_1^{s_1} \cdots \alpha_r^{s_r}
	= \sum_{s_1=0}^\infty \cdots \sum_{s_r=0}^\infty A_{s_1,\ldots,s_r},
\]
where the domain of the $A_{s_1,\ldots,s_r}$ is the field of rational functions over $\mathbb{C}$ in several complex
variables and the variables $\alpha_i$ are restricted to a neighborhood of the circle $|\alpha_i|=1$. In~\cite{Xin.2004}, Xin studied
the positive term operator
\begin{equation}
\label{equation.PT}
	\mathsf{PT}^\alpha\sum_{n=-\infty}^\infty a_n \alpha^n  = \sum_{n=0}^\infty a_n \alpha^n
\end{equation}
and more generally $\mathsf{PT}^{(\alpha_1,\ldots,\alpha_r)}$ by iteration of~\eqref{equation.PT}. He showed that~\cite[Equation (4.1)]{Xin.2004}
\[
	\Omega_{\geqslant} f(\alpha_1,\ldots,\alpha_r,x) 
	= \mathsf{PT}^{(\alpha_1,\ldots,\alpha_r)} f(\alpha_1,\ldots,\alpha_r,x)|_{(\alpha_1,\ldots,\alpha_r)
	=(1,\ldots,1)}.
\]
An \emph{Elliott-rational function} is a rational function that can be written in such
a way that its denominator can be factored into the products of one monomial minus another, with the 0 monomial allowed.
See for example~\cite[Definition 3.1]{Xin.2004}.

\begin{thm}\cite[Remark 4.9, Theorem 3.2]{Xin.2004}
\label{thm:Elliott-rational}
If $f$ is Elliott-rational, then $\mathsf{PT}^\alpha f$ is still Elliott-rational.
\end{thm}

Recall the quantum Ehrhart series indexed by a partition as in Remark~\ref{remark.QEhr partition}.
The main result of this section is the following.
\begin{theorem}
\label{theorem.A QEhr}
For $\mu$ a partition, we have
\begin{equation}
	A_\mu(z,q) = \mathsf{PT}^q (q-q^{-1})\QEhr_\mu(z,q).
\end{equation}
\end{theorem}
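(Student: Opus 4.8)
The plan is to unwind the definitions and reduce the identity to a statement about extracting the positive-power part of a known rational function. Recall from the computation just before Theorem~\ref{th:grsw_gf} that
\[
\QEhr_\mu(z,q) = \sum_{h\geqslant 0} s_\mu[s_h](q^{-1},q)\, z^h = \sum_{h\geqslant 0}\Bigl(\sum_{k\geqslant 1} a_{\mu[h]}^{[k]} [k]_q\Bigr) z^h,
\]
using the definition of the $\SL_2$-plethysm coefficients. Now multiply by $q-q^{-1}$ and use $(q-q^{-1})[k]_q = q^k - q^{-k}$ to get
\[
(q-q^{-1})\QEhr_\mu(z,q) = \sum_{h\geqslant 0}\sum_{k\geqslant 1} a_{\mu[h]}^{[k]} (q^k - q^{-k}) z^h.
\]
The coefficients $a_{\mu[h]}^{[k]}$ are nonnegative, so in the Laurent expansion in $q$ (with $z$ formal), the terms $q^k z^h$ with $k\geqslant 1$ have coefficient exactly $a_{\mu[h]}^{[k]}$, while the terms $q^{-k} z^h$ with $k\geqslant 1$ carry the negative contributions. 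Applying $\PT^q$, which by definition~\eqref{equation.PT} kills all strictly negative powers of $q$ and keeps the rest, extracts precisely $\sum_{h\geqslant 0}\sum_{k\geqslant 1} a_{\mu[h]}^{[k]} q^k z^h = A_\mu(z,q)$ (the constant term in $q$ vanishes since the sum over $k$ starts at $1$, and no cancellation between the $q^k$ and $q^{-k}$ families occurs).

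The one genuine subtlety is that $\PT^q$ is defined on Laurent series in $q$, whereas $(q-q^{-1})\QEhr_\mu(z,q)$ should be thought of as a rational function (or a power series in $z$ whose coefficients are Laurent polynomials in $q$); one must make sure we expand in the correct ``direction'' so that the positive-term extraction matches~\eqref{equation.Amu}. This is handled by noting $\QEhr_\mu(z,q)$ is a power series in $z$ with coefficients $s_\mu[s_h](q^{-1},q)\in\ZZ[q,q^{-1}]$, so for each fixed power $z^h$ the coefficient is an honest Laurent polynomial in $q$ on which $\PT^q$ acts termwise; the identity then holds coefficient-by-coefficient in $z$, hence as an identity of formal power series in $z$. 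The factored denominator $\prod_{i=0}^w(1-q^{w-2i}z)$ from Theorem~\ref{th:grsw_gf} confirms this is a legitimate rational (indeed Elliott-rational) function, which is what is needed for the later rationality statements via Theorem~\ref{thm:Elliott-rational}, but it is not logically required for the identity itself.

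The main obstacle is therefore not computational but definitional: being careful that the nonnegativity of $a_{\mu[h]}^{[k]}$ guarantees there is no interference between the images of $[k]_q$ under $q\mapsto q$ and the implicit negative tail, so that $\PT^q$ really does recover the $A_\mu$ generating function and not some twisted variant. Once that is pinned down, the proof is the three-line chain of equalities above.
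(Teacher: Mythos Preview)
Your proof is correct and follows essentially the same approach as the paper: expand $\QEhr_\mu(z,q)$ using the definition of the $\SL_2$-plethysm coefficients, multiply by $(q-q^{-1})$ to turn each $[k]_q$ into $q^k-q^{-k}$, and apply $\PT^q$ coefficientwise in $z$.

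One minor remark: your invocation of nonnegativity of $a_{\mu[h]}^{[k]}$ is unnecessary. In the Laurent polynomial $\sum_{k\geqslant 1} a_{\mu[h]}^{[k]}(q^k-q^{-k})$, the monomials $q^k$ and $q^{-k}$ for $k\geqslant 1$ are disjoint regardless of the signs of the coefficients, so there can be no ``interference'' to worry about; the coefficient of $q^k$ is $a_{\mu[h]}^{[k]}$ on purely formal grounds. The paper's proof accordingly does not mention positivity at all.
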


\begin{proof}
We defined $a_{\mu[h]}^{[k]}$ to be the coefficient of $[k]_q$ in the plethysm $s_\mu[s_h](q,q^{-1})$ for $k\geqslant 1$;
recall that by definition $a_{\mu[h]}^{[0]}=0$. This is equivalent to
\[
	a_{\mu[h]}^{[k]} = (q-q^{-1}) s_\mu[s_h](q,q^{-1})\big|_{q^k} \qquad \text{for $k \geqslant 0$.}
\]
Hence, using the definition of $A_\mu(z,q)$ in~\eqref{equation.Amu}, we obtain
\[
	A_\mu(z,q) = \sum_{k\geqslant 0, h\geqslant 0} a_{\mu[h]}^{[k]} q^k z^h
	= \sum_{h\geqslant 0} \mathsf{PT}^q (q-q^{-1}) s_\mu[s_h](q,q^{-1}) z^h
	=\mathsf{PT}^q (q-q^{-1})\QEhr_\mu(z,q),
\]
where the last equation follows from Theorem~\ref{th:grsw_gf}.
\end{proof}

\begin{cor}
\label{cor.A rational}
For a partition $\mu$, $A_\mu(z,q)$ is Elliott-rational.
\end{cor}

\begin{proof}
By Theorem~\ref{th:grsw_gf}, $\QEhr_\mu(z,q)$ is Elliott-rational in both $q$ and $z$. It then follows from
Theorems~\ref{theorem.A QEhr} and~\ref{thm:Elliott-rational} that $A_\mu(z,q)$ is Elliott-rational.
\end{proof}

We give two more conceptual ways of understanding the $\PT$ operator.
Following \cite{BZ-Omega}, the $\PT$ operator of a $q$-Ehrhart series is
\[
\PT[q] \qEhr^g_P(z,q) = \qEhr^g_{P\cap H^g}(z,q),
\]
where $H^g = \{v\in\R^w \mid \langle g,v\rangle \geqslant 0\}$. This is clear from the definition of the $q$-Ehrhart series, but gives a geometric interpretation of the operator. 
See Figure~\ref{fig:Omega}.

\begin{figure}
   \centering
    \begin{tikzpicture}[x=2em,y=2em,z=-1em]
        \fill[blue!30, opacity=.2] (0,0,3) -- (2,0,1.5) -- (2,3,1.5) -- (0,3,3) -- cycle;
        \fill[blue!30, opacity=.2] (2,0,1.5) -- (2,3,1.5) -- (5,3,3) -- (5,0,3) -- cycle;
        \filldraw[black, fill = red!50, very thick, opacity=.6, dotted] (1,3,2.25) -- (1.5,3,3.75) -- (3,1.5,4.5) -- (4,0,3.75) -- (3.5,0,2.25) -- (2,1.5,1.5) -- cycle;
        \draw[thick, opacity=.6, dotted] 
            (2,1.5,1.5) -- (2,3,1.5)
            (3.5,0,2.25) -- (5,0,3);
        \fill[blue!50, opacity=.2] (0,0,3) -- (3,0,4.5) -- (3,3,4.5) -- (0,3,3) -- cycle;
        \fill[blue!30, opacity=.2] (3,0,4.5) -- (3,3,4.5) -- (5,3,3) -- (5,0,3) -- cycle;
        \fill[blue!10, opacity=.2] (0,3,3) -- (3,3,4.5) -- (5,3,3) -- (2,3,1.5) -- cycle;
        \filldraw[black, fill = blue!40, very thick, opacity=.6] (3,1.5,4.5) -- (3,3,4.5) -- (1.5,3,3.75);
        \filldraw[black, fill = blue!30, very thick, opacity=.6] (3,1.5,4.5) -- (3,3,4.5) -- (5,3,3) -- (5,0,3) -- (4,0,3.75) -- cycle;
        \filldraw[black, fill = blue!20, very thick, opacity=.6] (1,3,2.25) -- (1.5,3,3.75) -- (3,3,4.5) -- (5,3,3) -- (2,3,1.5) -- cycle;
        \draw[black, thick] (1,3,2.25) -- (1.5,3,3.75) -- (3,1.5,4.5) -- (4,0,3.75);
    \end{tikzpicture}
   \caption{Let $\tilde{P} = [-1/2,1/2]^3$. The $\PT[q]$ operator applied to $\qEhr^u_{\tilde{P}}(z,q)$ with
   $u = (1,\ldots,1)$ is the $q$-Ehrhart series of the displayed sub-polytope of $\tilde{P}$, with respect to the grading $u$.}
   \label{fig:Omega}
\end{figure}
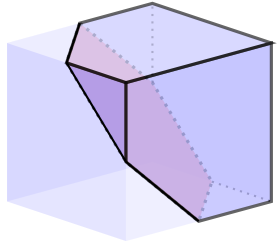

To apply this to the quantum setting, we need some extra care. Using Lemma~\ref{lem:from qEhr to QEhr}, we compute
\[
\PT[q] \QEhr_P(z,q) =
\PT[q] \qEhr_P^u(z/q^w,q^2) =
\sum_{h\geqslant0} \sum_{\substack{v\in hP\cap\ZZ^w \\ 2\langle u, v\rangle\geqslant wh}} q^{2\langle u,v\rangle - wh}z^h.
\]
Note that the right-hand side is not in the form of a quantum Ehrhart series of a polytope.
\medskip

The second conceptual way of understanding the $\PT$ operator is through the lens of quantum calculus. 
The relationship between $\QEhr_\mu(z,q)$ and $A_\mu(z,q)$ is evident from the following formulas:
\[
\QEhr_\mu(z,q) =  \sum_{k\geqslant 1, h\geqslant 0} a_{\mu[h]}^{[k]} [k]_q z^h
\quad\text{and}\quad
A_\mu(z,q) = \sum_{k\geqslant 1, h\geqslant 0} a_{\mu[h]}^{[k]} q^k z^h.
\]
To turn $A_\mu(z,q)$ into $\QEhr_\mu(z,q)$, one needs to construct a map sending $q^k$ to $[k]$. This is the map
\[
\frac{f(q)-f(q^{-1})}{q-q^{-1}}
\longmapsfrom
f(q)
\]
which is a specialization of the $q$-derivative defined in \cite[Ch.~26]{QuantumCalculus}.
Hence the map 
\[
f(q) \longmapsto \PT[q] (q-q^{-1}) f(q)
\]
that sends $\QEhr_\mu(z,q)$ to $A_\mu(z,q)$ is its $q$-antiderivative \cite[Ch.~19]{QuantumCalculus}.

\section{Combinatorial interpretation for plethysm coefficients}
\label{section.combinatorial}

In this section, we show how the bivariate generating function $A_\mu(z,q)$ relates to combinatorial statements
about $\SL_2$-plethysm coefficients in Section~\ref{ss:GeneratingFunctions}. In addition, we give
linear recursions that completely specify the $\SL_2$-plethysm coefficients in Section~\ref{ss:recursion}.

\subsection{Generating functions, cones, and partitions}
\label{ss:GeneratingFunctions}

In Theorem~\ref{theorem.A QEhr}, we showed that 
\[
	A_\mu(z,q) := \sum_{h\geqslant 0} \sum_{k\geqslant 0} a_{\mu[h]}^{[k]} q^k z^h 
	= \mathsf{PT}^q (q-q^{-1})\QEhr_{\mu}(z,q)
\]
and concluded in Conjecture~\ref{cor.A rational} that $A_\mu(z,q)$ is rational.
In this section, we give explicit rational expressions for $A_\mu(z,q)$ with positive numerators when $|\mu|\leqslant 5$,
illustrating how these generating functions can be converted to a combinatorial interpretation for the coefficients 
$a_{\mu[h]}^{[k]}$ as a union of polyhedral cones. 

\begin{table}
\begin{align*}
    A_{1}(z,q) & = \frac{q}{(1 - qz)} \\
    A_{2}(z,q) & = \frac{q}{(1-z^2)(1-q^2z)} \\
    A_{11}(z,q) & = \frac{qz}{(1 - z^2) (1- q^2z)} \\
    A_3(z,q) &= \frac{q+q^4z^3}{(1-z^4)(1-q^2z^2)(1-q^3z)}\\
    A_{21}(z,q) &= \frac{q^2z}{(1-z^2)(1-qz)(1-q^3z)}\\
    A_{111}(z,q) &= \frac{qz^2+q^4z^5}{(1-z^4)(1-q^2z^2)(1-q^3z)}\\
    A_{4}(z,q) & = \frac{q+q^7z^3}{(1-z^3)(1-z^2) (1-q^4 z^2) (1-q^{4} z)} \\
    A_{31}(z,q) & = \frac{q^3z}{(1-z^2) (1-z) (1-q^{2} z) (1-q^{4} z) } \\
    A_{22}(z,q) & = \frac{ qz +q^7 z^{4}}{(1-z^3) (1-q^4 z^2) (1-z) (1-q^4 z)} \\
    A_{211}(z,q) & = \frac{q^{3}z^2}{(1-z^2) (1-z) (1-q^{2} z) (1-q^{4} z) } \\
    A_{1111}(z,q) & = \frac{q z^3+q^7 z^{6}}{(1-z^3) (1-z^2) (1-q^{4} z^2) (1-q^4 z)} \\
\end{align*}
\caption{Rational expressions for $A_\mu(z,q)$ for $|\mu|\leqslant 4$.}
\label{table.rational A}
\end{table}

The positive rational expressions for $A_\mu(z,q)$ for $|\mu|\leqslant 4$ are given in Table~\ref{table.rational A}, which
were computed in {\sc SageMath}~\cite{sagemath}
from the explicit expression in Theorem~\ref{th:grsw_gf} by applying the $\mathsf{PT}^q$
operator.  We then put all of these generating functions in a very similar and suggestive form such that the numerator
is a polynomial with positive coefficients.
The generating functions $A_\mu(z,q)$ for some select partitions $|\mu|\leqslant 5$ are given in Appendix~\ref{appendix.A} 
and have a surprising jump in complexity from all of generating functions $A_\mu(z,q)$ for
$|\mu| \leqslant 4$. 

Each summand in these generating functions can be interpreted
as the generating function of points in a polyhedral cone.
For example, $A_3(z,q)$ is a sum of two
monomials over a product of terms of the form $1-q^a z^b$.

The summand
\begin{equation} \label{eq:examp_one}
	\frac{q}{(1-z^4)(1-q^2z^2)(1-q^3z)}
\end{equation}
is the generating function $\sum_{h,k \geqslant 0} |B^{(1)}_{h,k}| z^h q^k$
where $B^{(1)}_{h,k}$ is a set of points in $\mathbb{Z}^4$. The first coordinate represents
the weight in $z$ and the other three represent the weight in $q$.
Let $\vec{u} = (4,0,0,0)$ represent the weight of $z^4$, $\vec{v} = (2,0,2,0)$
represent the weight $q^2 z^2$ and $\vec{w} = (1,0,0,3)$ the weight of $q^3z$.  Then the set
\begin{equation}\label{set1}
	B^{(1)}_{h,k} = \{ \vec{p} = a \vec{u} + b \vec{v} + c \vec{w} \mid a,b,c \geqslant 0, p_1 = h, p_2+p_3+p_4 = k-1\}
\end{equation}
has the generating function in Equation \eqref{eq:examp_one} while the set
\begin{equation}\label{set2}
	B^{(2)}_{h,k} = \{  \vec{p} = a \vec{u} + b \vec{v} + c \vec{w} \mid a,b,c \geqslant 0, p_1 = h-3, p_2+p_3+p_4 = k-4 \}
\end{equation}
has as generating function $\sum_{h,k \geqslant 0} |B^{(2)}_{h,k}| z^h q^k$
for the second summand of $A_3(z,q)$
\[
	\frac{q^4z^3}{(1-z^4)(1-q^2z^2)(1-q^3z)}~.
\]
Therefore, we can say that a combinatorial interpretation for $a_{3[h]}^{[k]}$
is the number of points in the (disjoint) union of these two sets.

The polyhedral cone might seem like a contrived combinatorial interpretation,
however if another combinatorial interpretation is found
it could then potentially be proven via a bijection with the points in the cone.
We demonstrate this with the following interpretation.

\begin{proposition}[{\cite[Thm.~5.3]{COSSZ.2022}}]
Define the set
$$\Pi(3) =
\left\{(\mu_1,\mu_2) \text{ partition} \mid \mu_2 \text{ even and } \mu_1 \gge 2\mu_2\right\} ~ ,$$
where $\mu_1 \gge 2\mu_2$ means that $\mu_1 \geqslant 2\mu_2$ but $\mu_1 \neq 2\mu_2+1$.
Then $a_{3[h]}^{[k]}$ is equal to the number of $(\mu_1,\mu_2) \in \Pi(3)$ such that
$\mu_1 \leqslant h$ and $3h-2(\mu_1 + \mu_2)+1 = k$.
\end{proposition}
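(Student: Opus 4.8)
The plan is to verify the claimed bijection by directly analyzing the two-term rational expression for $A_3(z,q)$ given in Table~\ref{table.rational A}, matching it against the generating function of the set $\Pi(3)$ with the prescribed constraints. First I would record that, by the discussion following Theorem~\ref{th:grsw_gf} and the explicit formula for $A_3$, the coefficient $a_{3[h]}^{[k]}$ equals $|B^{(1)}_{h,k}| + |B^{(2)}_{h,k}|$ where $B^{(1)}_{h,k}$ and $B^{(2)}_{h,k}$ are the lattice-point sets in~\eqref{set1} and~\eqref{set2}. Unpacking these: a point $a\vec u + b\vec v + c\vec w$ with $a,b,c\geqslant 0$ contributes to $B^{(1)}_{h,k}$ exactly when $4a+2b+c = h$ and $2b+3c = k-1$, and to $B^{(2)}_{h,k}$ exactly when $4a+2b+c = h-3$ and $2b+3c = k-4$. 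So $a_{3[h]}^{[k]}$ counts nonnegative integer solutions $(a,b,c)$ to one of these two linear systems.

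Next I would set up the change of variables that converts $\Pi(3)$-data into $(a,b,c)$-data. A pair $(\mu_1,\mu_2)\in\Pi(3)$ with $\mu_1\leqslant h$ and $3h - 2(\mu_1+\mu_2)+1 = k$ is determined by $\mu_2$ even and $\mu_1\gge 2\mu_2$; write $\mu_2 = 2c'$ and split the condition $\mu_1\gge 2\mu_2$ as $\mu_1 = 4c' + r$ with $r\geqslant 0$ and $r\neq 1$, so $r\in\{0\}\cup\{2,3,4,\dots\}$. The two cases $r=0$ versus $r\geqslant 2$ should correspond to the two summands $B^{(1)}$ and $B^{(2)}$ (or some reindexing thereof): the idea is that the constraint $\mu_1\leqslant h$ furnishes the "slack" $a = h - \mu_1$ times $\vec u$, while $\mu_2/2$ and the residue data furnish $b$ and $c$. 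Concretely I would solve: given $(\mu_1,\mu_2)$, set $c$ from $\mu_2$, determine $b$ from the residue of $\mu_1$ modulo something, and set $a = (h-\mu_1)/?$, then check that the two linear systems above are satisfied and that the map is a bijection onto $B^{(1)}_{h,k}\sqcup B^{(2)}_{h,k}$. The equation $3h-2(\mu_1+\mu_2)+1 = k$ must be shown equivalent, under this substitution, to $2b+3c = k-1$ (resp. $k-4$); this is the key algebraic identity to pin down, and getting the exact correspondence between the "forbidden residue $1$" in $\gge$ and the shift by $3$ in $B^{(2)}$ is where the bookkeeping lives.

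An alternative, cleaner route — which I would actually prefer — is to avoid the cone model entirely and instead compute the generating function $\sum_{h,k} |\Pi(3)_{h,k}|\, z^h q^k$ directly, where $\Pi(3)_{h,k}$ is the set of valid pairs, and check it equals the Table~\ref{table.rational A} expression for $A_3(z,q)$. Summing over $(\mu_1,\mu_2)\in\Pi(3)$ the monomial $z^{\mu_1}\,(\text{something})$: from $3h - 2(\mu_1+\mu_2) + 1 = k$ and $\mu_1\leqslant h$, I would introduce $a = h-\mu_1\geqslant 0$, so $z^h q^k = z^{a+\mu_1} q^{3(a+\mu_1) - 2(\mu_1+\mu_2)+1} = (z q^3)^a\cdot z^{\mu_1} q^{\mu_1 - 2\mu_2 + 1}$, and then $\sum_a (zq^3)^a = 1/(1-zq^3)$ factors off, matching the $(1-q^3z)$ in the denominator. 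It remains to evaluate $\sum_{(\mu_1,\mu_2)\in\Pi(3)} z^{\mu_1} q^{\mu_1-2\mu_2+1}$; writing $\mu_2 = 2c$ and $\mu_1 = 4c + r$ with $r\in\{0,2,3,4,\dots\}$ turns this into $q\sum_{c\geqslant 0}(q^2 z^2)^{?}\cdots\sum_r (zq)^r$ with the $r=1$ term deleted, which should produce exactly $\dfrac{q + q^4 z^3}{(1-z^4)(1-q^2 z^2)}$ after the geometric-series evaluations. Combining with the $1/(1-q^3 z)$ factor gives $A_3(z,q)$.

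The main obstacle I anticipate is purely organizational rather than deep: correctly translating the slightly unusual relation $\mu_1\gge 2\mu_2$ (i.e. $\mu_1\geqslant 2\mu_2$ with $\mu_1\neq 2\mu_2+1$) into a clean geometric-series computation, and making sure the deleted $r=1$ term is exactly what accounts for the second numerator monomial $q^4 z^3$ (note $q^4 z^3 = q\cdot (zq)^? \cdot$ — one should check the exponents line up after the shift, since deleting the $r=1$ term from $\sum_{r\geqslant 0}(zq)^r$ gives $\frac{1}{1-zq} - zq = \frac{1 - zq + z^2q^2 \cdot(\text{tail})}{\cdots}$, and the precise form $1 + (zq)^2 + (zq)^3 + \cdots = \frac{1 - zq + z^2q^2 - \dots}{}$ needs care). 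Once that identity is nailed down, comparison with Table~\ref{table.rational A} is immediate, and the equality of generating functions yields the equality of coefficients, which is the assertion of the proposition. If one instead wants a genuine bijective proof, the same bookkeeping feeds directly into an explicit bijection $\Pi(3)_{h,k}\to B^{(1)}_{h,k}\sqcup B^{(2)}_{h,k}$, but I would present the generating-function comparison as the cleanest argument.
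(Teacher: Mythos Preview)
Your preferred generating-function route is correct and goes through cleanly: with $a=h-\mu_1$, $\mu_2=2m$, $s=\mu_1-2\mu_2\in\{0,2,3,4,\dots\}$ one gets
\[
\sum_{h,k}|\Pi(3)_{h,k}|\,z^hq^k
=\frac{1}{1-q^3z}\cdot\frac{1}{1-z^4}\cdot q\sum_{s\neq 1}(zq)^s
=\frac{q(1-zq+z^2q^2)}{(1-q^3z)(1-z^4)(1-zq)},
\]
and a one-line check shows this equals $\dfrac{q+q^4z^3}{(1-z^4)(1-q^2z^2)(1-q^3z)}=A_3(z,q)$. So the argument is sound.

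The paper takes a different route: it writes down an explicit bijection from $\Pi(3)_{h,k}$ to $B^{(1)}_{h,k}\sqcup B^{(2)}_{h,k}$, and the case split is by the \emph{parity of $\mu_1$} (equivalently, parity of your $s=\mu_1-2\mu_2$, since $\mu_2$ is even), not by $r=0$ versus $r\geqslant 2$ as you tentatively guessed. Indeed, splitting $\sum_{s\neq 1}(zq)^s$ into even and odd $s$ gives exactly the two numerators $q$ and $q^4z^3$ over the common denominator $1-q^2z^2$, which is why that is the split compatible with the cone decomposition. Your generating-function comparison is shorter and avoids this bookkeeping entirely; the paper's bijection, on the other hand, exhibits the combinatorial correspondence pointwise, which is what the surrounding discussion is advertising as the payoff of the cone interpretation.
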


\begin{proof}
Fix integers $h$ and $k$ and let $(\mu_1, \mu_2)$ be a point in $\Pi(3)$ such that
$\mu_1\leqslant h$ and $3h-2(\mu_1+\mu_2)+1=k$. Then we have:
\begin{itemize}
\item If $\mu_1$ is even, then $\vec{p} = (h,0,\mu_1-2\mu_2,k-1-\mu_1+2\mu_2)$
can be shown to be a point in $B^{(1)}_{h,k}$
and the inverse transformation takes the point
$\vec{p}$ and sends it to $(\mu_1,\mu_2) = (\frac{3p_1-p_4}{3},\frac{3p_1-3p_3-p_4}{6} )$.
\item If $\mu_1$ is odd, then $\vec{p} = (h-3,0,\mu_1-2\mu_2-3,k-\mu_1+2\mu_2-1)$ is a point in $B^{(2)}_{h,k}$.
The inverse transformation which takes a point of $B^{(2)}_{h,k}$ sends $\vec{p}$ to the point
$(\mu_1,\mu_2) = (\frac{3p_1-p_4+9}{3}, \frac{3p_1-3p_3-p_4}{6})$.
\end{itemize}
Note that the conditions on the weights in $B^{(1)}_{h,k}$ and $B^{(2)}_{h,k}$ follow from the conditions
$\mu_1\leqslant h$ and $3h-2(\mu_1+\mu_2)+1=k$.
\end{proof}

Similar bijections recover the combinatorial interpretations for certain $\SL_2$-plethysm coefficients found 
in~\cite{Howe.1987, OSSZ.2024, GutCrystals}, among others.

\subsection{Recurrences for plethysm coefficients}
\label{ss:recursion}

In Section \ref{s:GeometricPicture}, we showed that the generating function $A_\mu(z,q)$ of $\SL_2$-plethysm coefficients is rational
and in Section \ref{section.GLn} we will show that
the generating function for plethysm coefficients $\aaa^\lambda_{\mu[\nu]}$
where $\nu$ is fixed is also rational.
These generating functions imply the existence of
certain linear recurrences, such as the ones found in \cite{OSSZ.2024,GutCrystals,Treteault}.
In this section we find a simple linear recurrence that fully describes $\SL_2$-plethysm coefficients.

Recall that
\begin{equation}
\label{equation.a in terms of quantum qbin}
	a_{w[h]}^{[k]} = \Qbinom{w+h}{h} (q-q^{-1})  \Big|_{q^k}
\end{equation}
for $1\leqslant k \leqslant hw+1$ and $k \equiv wh+1 \pmod 2$. We now state recursion relations for the plethysm coefficients~$a_{w[h]}^{[k]}$.

\begin{prop}
We have for $1\leqslant k \leqslant hw+1$ and $k \equiv wh+1 \pmod 2$
\begin{equation}
\label{equation.w recursion}
	a_{w[h]}^{[k]} = \begin{cases}
	a_{(w-1)[h]}^{[k-h]} & \text{if $wh+1-2w < k \leqslant wh+1$,}\\[.5em]
	a_{(w-1)[h]}^{[k-h]} + a_{w[h-1]}^{[k+w]} & \text{if $h < k\leqslant hw+1-2w$,}\\[.5em]
	a_{w[h-1]}^{[k+w]} - a_{(w-1)[h]}^{[h-k]} & \text{if $1 \leqslant k \leqslant h$.}
	\end{cases}
\end{equation}
Similarly, we have
\begin{equation}
\label{equation.h recursion}
	a_{w[h]}^{[k]} = \begin{cases}
	a_{w[h-1]}^{[k-w]} & \text{if $wh+1-2h < k \leqslant wh+1$,}\\[.5em]
	a_{w[h-1]}^{[k-w]} + a_{(w-1)[h]}^{[k+h]} & \text{if $w < k\leqslant hw+1-2h$,}\\[.5em]
	a_{(w-1)[h]}^{[k+h]} - a_{w[h-1]}^{[w-k]} & \text{if $1 \leqslant k \leqslant w$.}
	\end{cases}
\end{equation}
\end{prop}

\begin{proof}
The recursion~\eqref{equation.h recursion} follows from~\eqref{equation.w recursion} (and vice versa) by interchanging $h$ and $w$, since 
$a_{h[w]}^{[k]}$ is symmetric under interchanging $h$ and $w$.

The $q$-binomial coefficient satisfies the $q$-Pascal identity
\[
	\Qbinom{w+h}{h} = q^{h}\Qbinom{w+h-1}{h} + q^{-w} \Qbinom{w+h-1}{h-1}.
\]
Hence for $1\leqslant k \leqslant wh+1$ and $k \equiv wh+1 \pmod 2$ we have
\begin{multline*}
	a_{w[h]}^{[k]} = \left(q^h\Qbinom{w+h-1}{h}(q-q^{-1}) + q^{-w} \Qbinom{w+h-1}{h-1}(q-q^{-1})\right) \Big|_{q^k}\\
	= \Qbinom{w+h-1}{h}(q-q^{-1}) \Big|_{q^{k-h}} + \Qbinom{w+h-1}{h-1}(q-q^{-1}) \Big|_{q^{k+w}}.
\end{multline*}
When $wh+1-2w < k \leqslant wh+1$, the second term is zero and the first term is $a_{(w-1)[h]}^{[k-h]}$ by~\eqref{equation.a in terms of quantum qbin}, 
which is the first case in~\eqref{equation.w recursion}.
When $h < k\leqslant hw+1-2w$, the first term is $a_{(w-1)[h]}^{[k-h]}$ and the second term is $a_{w[h-1]}^{[k+w]}$, which
proves the second case in~\eqref{equation.w recursion}. 
When $1 \leqslant k \leqslant h$, the second term is still $a_{w[h-1]}^{[k+w]}$. For the first term we cannot use~\eqref{equation.a in terms of quantum qbin} 
to relate it to the plethysm coefficients. However, we can use the fact that
\[
	\Qbinom{w+h}{h} \Big|_{q^k} = \Qbinom{w+h}{h} \Big|_{q^{-k}}.
\]
Namely,
\[
\Qbinom{w+h-1}{h}(q-q^{-1}) \Big|_{q^{k-h}} = -\Qbinom{w-1+h}{h}  \Big|_{q^{h-k}}=- a_{(w-1)[h]}^{[h-k]}
\]
proving the last case in~\eqref{equation.w recursion}.
\end{proof}

\section{An explicit expression for \texorpdfstring{$A_\mu(z,q)$}{Aμ(z,q)}}
\label{section.Amu}

In Section~\ref{ss:GeneratingFunctions} we considered examples for rational functions for $A_\mu(z,q)$ and
saw that expressions with positive numerators lead to combinatorial interpretations counting points in cones.
In this section, we analyze the general form of $A_\mu(z,q)$ more closely.

\subsection{A conjecture for the denominator}

In Theorem~\ref{theorem.A QEhr}, we provided an explicit formula $A_\mu(z, q)$ and concluded in
Corollary~\ref{cor.A rational} that $A_\mu(z,q)$ is Elliott-rational. In this section, we study expressions for its
denominator.
Computing examples of $A_\mu(z,q)$ for $|\mu| \leqslant 10$, we
make the following conjecture about the form of the denominator
of $A_\mu(z,q)$.

\begin{conjecture} \label{conj.good denominator}
Fix a positive integer $w$ and a partition $\mu \vdash w$.
For any cell $c \in \mu$, recall the hook length of $c$ in the partition $\mu$ by $\mathsf{hl}_\mu(c)$.
If $w$ is even, define
$$d_\mu(z,q) = \prod_{c \in \mu} (1-z^{\mathsf{hl}_\mu(c)}) \prod_{i=0}^{\frac{w}{2}-1} (1-q^{w-2i}z)$$
and if $w$ is odd, define
$$d_\mu(z,q) = \prod_{c \in \mu} (1-z^{2\mathsf{hl}_\mu(c)}) \prod_{i=0}^{\frac{w-1}{2}} (1-q^{w-2i}z)~.$$
The expression $\frac{d_\mu(z,q) A_\mu(z,q)}{(1-z)^2}$ is an element of $\mathbb{Z}[z,q]$.
\end{conjecture}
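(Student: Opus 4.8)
The starting point is Theorem~\ref{theorem.A QEhr}, which gives
$A_\mu(z,q) = \mathsf{PT}^q (q-q^{-1})\QEhr_\mu(z,q)$, together with the closed form of
$\QEhr_\mu(z,q)$ from Theorem~\ref{th:grsw_gf}:
\[
\QEhr_\mu(z,q) = \frac{\sum_{T\in\SYT(\mu)} q^{-2\maj(T)} (zq^w)^{\des(T)}}{\prod_{i=0}^w(1-q^{w-2i}z)}.
\]
So I would first understand the effect of the operator $(q-q^{-1})$ and then of $\mathsf{PT}^q$ on this rational function. Multiplying by $(q-q^{-1})$ simply splits the sum into a difference of two series in $q$. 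The key combinatorial input is the symmetry $\des(T')=|T|-1-\des(T)$ and $\maj(T')=\binom{|T|}{2}-\maj(T)$ from the Preliminaries, together with the fact that conjugation $T\mapsto T'$ is a bijection $\SYT(\mu)\to\SYT(\mu')$; this pairing will be what allows the negative-$q$-power contributions coming from $q^{-1}\QEhr_\mu$ to be re-expressed, after applying $\mathsf{PT}^q$, in a way whose denominator is still controlled. (Indeed this is exactly the mechanism behind the reciprocity~\eqref{equation.reciprocity}, which I would invoke or reprove in this local form.)

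The heart of the matter is to control the denominator produced by $\mathsf{PT}^q$. The plan is to write $\QEhr_\mu(z,q)$ (or rather $(q-q^{-1})\QEhr_\mu(z,q)$) as a $\ZZ[z]$-linear combination of terms of the shape $q^{a}/\prod_i (1-q^{w-2i}z)$ and apply $\mathsf{PT}^q$ term by term using a partial-fraction expansion in $q$. Concretely, one expands $1/\prod_{i=0}^w(1-q^{w-2i}z)$ as a sum $\sum_{i=0}^w c_i(z)/(1-q^{w-2i}z)$ with $c_i(z)\in\ZZ[z,z^{-1}]$ having denominators dividing $\prod_{j\ne i}(1-q^{w-2i}\cdot\text{stuff})$ evaluated appropriately — in fact $c_i(z)$ has denominator a product of factors $(1-z^{m})$ for various $m$ (a Cauchy-type identity / the $q$-binomial theorem in disguise). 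Then $\mathsf{PT}^q$ acts on each $1/(1-q^{w-2i}z)$: for $i<w/2$ the exponent $w-2i>0$ and the factor contributes a genuine geometric series in positive powers of $q$, so $\mathsf{PT}^q$ leaves it essentially alone (picking up at worst a shift), producing the factors $(1-q^{w-2i}z)$ for $0\le i\le w/2-1$ in the denominator; for $i>w/2$ the exponent is negative and $\mathsf{PT}^q$ of $1/(1-q^{w-2i}z)$ is a \emph{polynomial} in $q$ with coefficients in $\ZZ[z^{-1},z]]$, contributing no new $q$-denominator but possibly new $z$-denominators; the middle term $i=w/2$ (when $w$ is even) is the source of the extra $(1-z)$ factor, and when $w$ is odd there is no exact middle and the analysis is cleaner in powers of $q$. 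Tracking the $z$-denominators through this computation is where the hook-length product $\prod_{c\in\mu}(1-z^{h_\mu(c)})$ must appear: here I would use the principal specialization / hook-content formula $\prod_{i=0}^{w}\frac{1}{1-q^{w-2i}z}\cdot(\text{numerator})$, i.e. the identity $\sum_h s_\mu(1,q^2,\dots,q^{2h})z^h$ has, as a rational function of $z$ alone (for fixed $q$), denominator $\prod_{c\in\mu}(1-q^{?}z^{h_\mu(c)})$ coming from the $q$-hook-length formula for $s_\mu(1,q,q^2,\dots)$. Combining, every $z$-pole of $A_\mu(z,q)$ is at a root of some $1-z^{h_\mu(c)}$ (with multiplicity one more than expected, hence the $(1-z)^2$ correction), and every $q$-pole is at a root of some $1-q^{w-2i}z$ with $0\le i\le w/2-1$ (resp. $\frac{w-1}{2}$).

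I would organize the write-up as: (1) record the partial-fraction expansion of $\prod_{i=0}^w(1-q^{w-2i}z)^{-1}$ and identify the $z$-denominators of the coefficients via the $q$-hook-content formula for $s_\mu(1,q,\dots,q^h)$; (2) compute $\mathsf{PT}^q$ of each simple term, separating the three cases $w-2i>0$, $=0$, $<0$, and note which ones contribute to the $q$-part of $d_\mu$ and which to the $z$-part; (3) use the conjugation symmetry to handle the $q^{-1}\QEhr_\mu$ half and check no further denominators arise; (4) multiply everything out and verify the stated $d_\mu$ (times $(1-z)^2$ in the denominator correction) clears all poles, leaving a genuine polynomial in $\ZZ[z,q]$ — integrality being automatic since every operation ($q$-Pascal, $\mathsf{PT}^q$, partial fractions over $\ZZ$) preserves $\ZZ$-coefficients. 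The main obstacle I expect is step (1)–(2): pinning down \emph{exactly} which $z$-factors survive, with correct multiplicities, after the partial fraction expansion and the $\mathsf{PT}^q$ of the negative-power terms — i.e. proving the bound on $z$-pole orders is $h_\mu(c)$ (not larger) and that only one extra factor of $(1-z)$ is needed. This is genuinely a bound-chasing argument and is presumably why the authors present it as a conjecture rather than a theorem; a clean proof may require a more structural identity (e.g. a direct cone decomposition as in Conjecture~\ref{conj:Ehr}) rather than the brute-force partial-fraction route sketched here.
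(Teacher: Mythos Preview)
The statement you are attempting to prove is a \emph{conjecture} in the paper: the authors explicitly do not prove it, and instead prove the weaker Theorem~\ref{th:semi-final}, where the hook-length product $\prod_{c\in\mu}(1-z^{h_\mu(c)})$ is replaced by $\prod_{i=1}^w(1-z^i)$ (or $\prod_{i=1}^w(1-z^{2i})$ for $w$ odd), a product depending only on $w=|\mu|$. Your steps (2)--(4) are essentially the paper's proof of that weaker result: partial fractions in $q$, case analysis on the sign of $w-2i$ under $\mathsf{PT}^q$, and tracking which $(1-q^{w-2i}z)$ survive.

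The genuine gap is step (1). The partial-fraction coefficients $c_i(z)$ in the expansion of $\prod_{i=0}^w(1-q^{w-2i}z)^{-1}$ have $z$-denominators that are products of $(1-z^m)$ with $m$ determined by \emph{differences of the exponents $w-2i$} (this is exactly Proposition~\ref{prop:maincalc} and its corollaries in the paper); these depend only on $w$, not on $\mu$. The hook-content formula you invoke controls the $q$-denominator of the principal specialization $s_\mu(1,q,\ldots,q^h)$ --- the factor $\prod_c(1-q^{h_\mu(c)})$ is a function of $q$, not of $z$ --- and does not produce factors of the form $(1-z^{h_\mu(c)})$ after summing over $h$. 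Since the denominator of $\QEhr_\mu(z,q)$ is the same for every $\mu\vdash w$, any $\mu$-specific refinement of the $z$-denominator must come from cancellations with the \emph{numerator} $\sum_{T\in\SYT(\mu)} q^{-2\maj(T)}(zq^w)^{\des(T)}$, and your plan contains no mechanism for exhibiting such cancellations. This is precisely why the authors state the hook-length version as a conjecture and prove only the $\mu$-independent bound.
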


The product of the hooks appearing in the denominator of $A_\mu(z,q)$ is suggestive,
however it is not exact and we are unable to provide a more precise conjecture to understand a link
between the product of hooks and the plethysm coefficients.

The expression $d_\mu(z,q)/(1-z)^2$ is not exactly the denominator
of $A_{\mu}(z,q)$, but it is very close and in certain cases we can state a more precise
expression for the denominator.

In the next section we provide an expression that the denominator
of $A_\mu(z,q)$ divides that can be determined using some
analysis of the expressions in Theorem~\ref{th:grsw_gf}.

\subsection{A theorem: the denominator divides the expression}

A direct calculation of the $\mathsf{PT}^q$ operator on $(q-q^{-1})\QEhr_\mu(z,q)$
as in the formula in Theorem \ref{theorem.A QEhr}
will require a partial fraction decomposition of $\QEhr_\mu(z,q)$
into terms for which we can apply the $\mathsf{PT}^q$.

We will show in the results below that $(q-q^{-1})\QEhr_\mu(z,q)$
can be expressed as a sum of terms of the form
\[
\frac{q^a z^b}{\mathsf{den}(z)(1-q^c z)}
\]
with
$\mathsf{den}(z) \in \mathbb{Q}\left[z\right]$,
$a,c \in \mathbb{Z}$ and $b \geqslant 0$.

If we wish to compute $\mathsf{PT}^q$ on
a single term in this partial fraction decomposition
we may factor out the $\frac{z^b}{\mathsf{den}(z)}$ and compute
the action on $\frac{q^a}{(1-q^c z)}$ which we show in Theorem \ref{th:semi-final}.

Let $\left< g_1, g_2 \right>_{qz}$ denote the ideal of $\mathbb{Q}[z,q,q^{-1}]$
generated by $g_1, g_2$.  In the following result we show that certain polynomials $p\in \mathbb{Q}[z,q,q^{-1}]$
are in the ideal $\left< g_1, g_2 \right>_{qz}$ by proving that there exists elements $f_1, f_2 \in \mathbb{Q}[z,q,q^{-1}]$ such that
$f_1 g_1 + f_2 g_2 = p$.

\begin{proposition} \label{prop:maincalc}
Let $a,b > 0$. Then
\[
1-z^{\frac{a+b}{\gcd(a,b)}} \in \left< 1-q^a z, 1- z/q^b\right>_{qz}.
\]
Furthermore if $a>b$, then
\[
1-z^{\frac{a-b}{\gcd(a,b)}} \in \left< 1-q^a z, 1-q^b z\right>_{qz}
\hbox{  and  }
1-z^{\frac{a-b}{\gcd(a,b)}} \in \left< 1-z/q^a, 1 - z/q^b\right>_{qz}~.
\]
\end{proposition}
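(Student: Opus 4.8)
The plan is to exhibit explicit Bézout-type identities $f_1 g_1 + f_2 g_2 = p$ in $\mathbb{Q}[z,q,q^{-1}]$ for each of the three claims, using only elementary manipulations of geometric-series telescoping. For the first claim, write $d = \gcd(a,b)$, $a = d a'$, $b = d b'$, so that $a'+b' = (a+b)/d$. The key observation is that in the quotient ring $\mathbb{Q}[z,q,q^{-1}]/\langle 1-q^a z\rangle$ we have $z \equiv q^{-a}$, while modulo $1 - z/q^b$ we have $z \equiv q^b$; these are consistent on $q^a z \cdot (z/q^b)^{?}$ after raising to suitable powers. Concretely, I would use that $z^{a'+b'}$ is congruent both to $q^{-a a'} z^{b'}$-type expressions via repeated substitution. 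The cleanest route is: factor $1 - z^{a'+b'}$ along the lattice point $(a',b')$ of the cone generated by the exponent vectors of $1-q^a z$ and $1-z/q^b$; i.e., observe $1 - z^{a'+b'} = 1 - (q^a z)^{b'} (z/q^b)^{a'}$ since $(q^a z)^{b'}(z/q^b)^{a'} = q^{ab' - a'b} z^{a'+b'} = q^0 z^{a'+b'} = z^{a'+b'}$, using $ab' = a'b = a'b'd$. Then apply the standard identity $1 - XY = (1-X) + X(1-Y)$ with $X = (q^a z)^{b'}$, $Y = (z/q^b)^{a'}$, together with the factorizations $1 - X^{b'} = (1-X)(1 + X + \cdots + X^{b'-1})$ and likewise for $Y$, to land inside $\langle 1 - q^a z, 1 - z/q^b\rangle_{qz}$.

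For the two ``$a > b$'' claims, the same trick applies with a sign adjustment. Here $a'-b' = (a-b)/d > 0$. For $\langle 1-q^a z, 1-q^b z\rangle_{qz}$: modulo $1-q^a z$ we get $z \equiv q^{-a}$ and modulo $1-q^b z$ we get $z \equiv q^{-b}$, so I want to write $z^{a'-b'}$ as a monomial in $q^a z$ and $(q^b z)^{-1}$. Since negative powers of $q^b z$ are not polynomial, instead use $1 - z^{a'-b'} = 1 - (q^a z)^{b'}(q^b z)^{-a'} \cdot q^{?}$ — but to stay polynomial, multiply through: observe $(q^a z)^{b'} = q^{a b'} z^{b'}$ and $(q^b z)^{a'} = q^{b a'} z^{a'}$, and since $ab' = a'b$ we get $(q^a z)^{b'} = q^{b a'} z^{b'}$, hence $(q^a z)^{b'} \cdot z^{a'-b'} = q^{ba'} z^{a'} = (q^b z)^{a'}$. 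Therefore $z^{a'-b'} = (q^b z)^{a'} / (q^a z)^{b'}$, which is a ratio of polynomials but not a polynomial; to fix this, note instead that $(q^a z)^{b'} (1 - z^{a'-b'}) = (q^a z)^{b'} - (q^b z)^{a'}$, and $(q^a z)^{b'} - (q^b z)^{a'} \in \langle 1 - q^a z, 1 - q^b z\rangle$ by telescoping ($= [(q^a z)^{b'} - 1] - [(q^b z)^{a'} - 1]$). Since $q^a z$ is a unit in $\mathbb{Q}[z,q,q^{-1}]$? No — it is not a unit, but $1 - q^a z$ being in the ideal lets us replace $(q^a z)^{b'}$ by $1$ modulo the ideal, so $(q^a z)^{b'}(1-z^{a'-b'}) \equiv 1 - z^{a'-b'}$, giving $1 - z^{a'-b'} \in \langle 1-q^a z, 1-q^b z\rangle_{qz}$ directly. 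The third claim ($\langle 1-z/q^a, 1-z/q^b\rangle_{qz}$) follows identically with $q \mapsto q^{-1}$, or by the symmetric computation using $(z/q^b)^{a'}(1 - z^{a'-b'}) = (z/q^b)^{a'} - (z/q^a)^{b'}$ and telescoping.

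The main obstacle — really the only subtlety — is bookkeeping the exponent arithmetic to ensure all intermediate expressions stay in $\mathbb{Q}[z,q,q^{-1}]$ (Laurent in $q$, polynomial in $z$) rather than introducing genuine denominators; the identity $ab' = a'b = a'b'\gcd(a,b)$ is what makes the cross-terms collapse to pure powers of $z$, and I would state this explicitly up front. A secondary point is verifying that the ``replace a unit-looking factor by $1$ modulo the ideal'' step is legitimate: this is just the general fact that if $1 - u \in I$ then $uv \equiv v \pmod I$, which holds in any commutative ring. I expect the writeup to be short: set notation $d = \gcd(a,b)$, $a' = a/d$, $b' = b/d$; record $ab' = a'b$; then three one-line telescoping identities plus the modular-reduction remark.
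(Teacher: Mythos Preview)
Your approach is correct and essentially the same as the paper's: the paper writes down explicit geometric-series cofactors $f_1,f_2$ and verifies, and your $1-XY = (1-X)+X(1-Y)$ decomposition (with $X=(q^az)^{b'}$, $Y=(z/q^b)^{a'}$) together with the modular-reduction step for the second case unpack to exactly those cofactors. The key exponent identity $ab'=a'b$ that you highlight is precisely what makes the paper's computation telescope, and the third case is handled by $q\mapsto q^{-1}$ in both treatments.
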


\begin{proof}
We note that
\begin{align*}
\frac{\lcm(a,b)}{b}\pm\frac{\lcm(a,b)}{a} &= \frac{a \lcm(a,b) \pm b \lcm(a,b)}{ab}
= \frac{a\pm b}{\gcd(a,b)}~,
\end{align*}
where the last equality holds since $ab = \gcd(a,b) \lcm(a,b)$.

For the first identity, we choose
\[
f_1 = (1+q^a z+q^{2a} z^2+\cdots + q^{(\frac{\lcm(a,b)}{a}-1)a} z^{\frac{\lcm(a,b)}{a}-1})
\]
and
\[
f_2 = q^{\lcm(a,b)}z^{\frac{\lcm(a,b)}{a}}(1 + z/q^{b} + \cdots + z^{\frac{\lcm(a,b)}{b}-1}/q^{(\frac{\lcm(a,b)}{b}-1)b})~.
\]
We then calculate that
\begin{align*}
&f_1 (1-q^a z) + f_2 (1-z/q^b)\\
&= (1- q^{\lcm(a,b)} z^{\frac{\lcm(a,b)}{a}})
+ q^{\lcm(a,b)}z^{\frac{\lcm(a,b)}{a}}(1-z^{\frac{\lcm(a,b)}{b}}/q^{\lcm(a,b)})\\
&= 1-z^{\frac{\lcm(a,b)}{a}+\frac{\lcm(a,b)}{b}} = 1-z^{\frac{a+b}{\gcd(a,b)}}~.
\end{align*}

For the second identity, we choose
\[
f_1 = -z^{\frac{\lcm(a,b)}{b}-\frac{\lcm(a,b)}{a}}(1+q^a z + \cdots + q^{\lcm(a,b)-a} z^{\frac{\lcm(a,b)}{a}-1})
\]
and
\[
f_2 = (1+q^bz + \cdots + q^{\lcm(a,b)-b} z^{\frac{\lcm(a,b)}{b}-1})
\]
then calculate
\begin{align*}
&f_1 (1-q^a z) + f_2 (1-q^b z)\\
&= -z^{\frac{\lcm(a,b)}{b}-\frac{\lcm(a,b)}{a}} (1-q^{\lcm(a,b)} z^{\frac{\lcm(a,b)}{a}}) + (1-q^{\lcm(a,b)} z^{\frac{\lcm(a,b)}{b}})\\
&=1-z^{\frac{\lcm(a,b)}{b}-\frac{\lcm(a,b)}{a}}~.
\end{align*}
The third identity follows by replacing $q$ by $q^{-1}$.
\end{proof}

\begin{remark}\label{rem:div}
It also follows that if $1-z^c$ is in the ideal
and $c' = m c$, then
$1-z^{c'}$ is also in the ideal since
\[
1-z^{c'} = (1-z^{c})(1 + z^{c} + z^{2c} + \cdots + z^{(m-1)c})~.
\]
We will mostly apply this to the case that
$c = \frac{a \pm b}{\gcd(a,b)}$ and $c' = a \pm b$ or, in the case
when $a$ and $b$ are both even, $c' = \frac{a\pm b}{2}$.
\end{remark}

\begin{proposition}\label{prop:first_half}
For integers $r\in\mathbb{Z}$ and $\ell\geqslant 0$ such that
$r \neq \{0,-2,-4,\ldots, -2\ell\}$, let
\[
\mathsf{d}_{r,\ell} = (1-q^{r} z)(1-q^{r+2} z) \cdots (1-q^{r+2\ell} z).
\]
Then there exists non-zero polynomials
$p_{r,\ell,i} \in \mathbb{Z}[z,q,q^{-1}]$ for $0 \leqslant i \leqslant \ell$ such that
\begin{align*}
\sum_{i=0}^\ell p_{r,\ell,i} \frac{\mathsf{d}_{r,\ell}}{1-q^{r+2i}z} = (1-z^2)(1-z^4)\cdots(1-z^{2\ell})~.
\end{align*}
\end{proposition}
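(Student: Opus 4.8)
The plan is to induct on $\ell$, using Proposition~\ref{prop:maincalc} at each step to peel off the factors $1-q^{r+2i}z$ two at a time. The base case $\ell = 0$ is trivial: the right-hand side is the empty product $1$, and we take $p_{r,0,0} = 1$ so that $p_{r,0,0}\cdot\frac{\mathsf{d}_{r,0}}{1-q^r z} = 1$. For the inductive step, suppose the claim holds for $\ell-1$ and all relevant values of $r$. The idea is to pair the outermost two factors $1 - q^r z$ and $1 - q^{r+2\ell}z$ of $\mathsf{d}_{r,\ell}$, which contribute a factor $1 - z^{?}$ via the second identity of Proposition~\ref{prop:maincalc} with exponents $a = r+2\ell$ and $b = r$ (assuming for the moment $r > 0$, so both are positive; the general case is handled by writing things in terms of $q^{\pm}$ and adjusting, as the proposition already anticipates). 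Since $a - b = 2\ell$ and $\gcd(a,b)$ divides $2\ell$, Remark~\ref{rem:div} upgrades $1 - z^{(a-b)/\gcd(a,b)} \in \langle 1-q^a z, 1-q^b z\rangle_{qz}$ to $1 - z^{2\ell} \in \langle 1 - q^r z,\, 1 - q^{r+2\ell}z\rangle_{qz}$.

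Concretely, there are Laurent polynomials $g_0, g_\ell \in \ZZ[z,q,q^{-1}]$ with
\[
g_0 (1 - q^r z) + g_\ell (1 - q^{r+2\ell} z) = 1 - z^{2\ell}.
\]
Multiplying through by $\dfrac{\mathsf{d}_{r,\ell}}{(1-q^r z)(1-q^{r+2\ell}z)} = (1-q^{r+2}z)\cdots(1-q^{r+2(\ell-1)}z) = \mathsf{d}_{r+2,\ell-2}$ (reindexing: this is the product of the $\ell-1$ middle factors, which is $\mathsf{d}_{r+2,\ell-2}$ when $\ell \geq 2$, and for $\ell = 1$ the middle product is empty) gives
\[
g_0\,\frac{\mathsf{d}_{r,\ell}}{1 - q^r z} \;+\; g_\ell\,\frac{\mathsf{d}_{r,\ell}}{1 - q^{r+2\ell}z} \;=\; (1 - z^{2\ell})\,\mathsf{d}_{r+2,\ell-2}.
\]
Now apply the inductive hypothesis to $\mathsf{d}_{r+2,\ell-2}$: there exist $p_{r+2,\ell-2,i}$ for $0 \leq i \leq \ell-2$ with $\sum_i p_{r+2,\ell-2,i}\,\frac{\mathsf{d}_{r+2,\ell-2}}{1-q^{r+2+2i}z} = (1-z^2)(1-z^4)\cdots(1-z^{2(\ell-2)})$. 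Wait — this requires care with the index shift. Better: induct directly with the hypothesis applied to $\mathsf{d}_{r,\ell-1}$ rather than a shifted version, peeling off only the top factor $1-q^{r+2\ell}z$ paired against the bottom one. Either way, combining the two displayed relations expresses $(1-z^2)\cdots(1-z^{2\ell})$ as a $\ZZ[z,q^{\pm}]$-linear combination of the terms $\mathsf{d}_{r,\ell}/(1-q^{r+2i}z)$, yielding the polynomials $p_{r,\ell,i}$.

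The main obstacle, and the place requiring the most care, is bookkeeping rather than genuine difficulty: one must (i) correctly track how the index $r$ shifts under the induction so that the hypothesis applies to the right $\mathsf{d}$ and so that the hypotheses $r \notin \{0,-2,\dots,-2\ell\}$ propagate correctly to the sub-cases; (ii) handle the sign/$q^{\pm}$ issue when $r \leq 0$, so that Proposition~\ref{prop:maincalc} applies — here one writes $1 - q^{r+2\ell}z$ and $1 - q^r z$ with the more negative exponent playing the role of $1 - z/q^b$, invoking the third identity of Proposition~\ref{prop:maincalc}, and the hypothesis $r \notin \{0,-2,\dots,-2\ell\}$ guarantees the two exponents $r$ and $r+2\ell$ are never equal to $0$ simultaneously — actually it guarantees the relevant difference is nonzero and the gcd argument goes through; and (iii) verify the $p_{r,\ell,i}$ are \emph{non-zero}, which follows because the leading construction in Proposition~\ref{prop:maincalc} gives explicit $f_1, f_2$ that are manifestly non-zero, and non-vanishing is preserved under the multiplications and the combination with the (non-zero) inductive polynomials — a degree or lowest-term argument in $z$ suffices. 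None of these steps is deep; the statement is essentially a Bézout-type identity in $\QQ[z,q^{\pm}]$ assembled inductively from the two-variable case already proved.
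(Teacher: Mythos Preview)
Your proposal has the right ingredients (induction on $\ell$, Proposition~\ref{prop:maincalc}, Remark~\ref{rem:div}) but the assembly is wrong, and the gap is not just bookkeeping.

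In your first approach you pair the two outermost factors $1-q^r z$ and $1-q^{r+2\ell}z$ to obtain $1-z^{2\ell}$, then propose to apply the inductive hypothesis to the middle block $\mathsf{d}_{r+2,\ell-2}$. But the inductive hypothesis on $\mathsf{d}_{r+2,\ell-2}$ produces only $(1-z^2)(1-z^4)\cdots(1-z^{2(\ell-2)})$; combined with your $(1-z^{2\ell})$ this gives
\[
(1-z^2)\cdots(1-z^{2(\ell-2)})(1-z^{2\ell}),
\]
which is missing the factor $(1-z^{2(\ell-1)})$. Moreover, the terms $\mathsf{d}_{r+2,\ell-2}/(1-q^{r+2+2j}z)$ coming from that inductive step are not of the required form $\mathsf{d}_{r,\ell}/(1-q^{r+2i}z)$; you would have to multiply them by $(1-q^r z)(1-q^{r+2\ell}z)$, which then reintroduces linear factors in $q$ on the right-hand side rather than a pure polynomial in $z$. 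Your pivot to ``peel off only the top factor'' does not fix this: if you use just one inductive hypothesis for $\mathsf{d}_{r,\ell-1}$ and multiply by $(1-q^{r+2\ell}z)$, the right-hand side becomes $(1-z^2)\cdots(1-z^{2(\ell-1)})(1-q^{r+2\ell}z)$, and a single B\'ezout relation cannot convert $(1-q^{r+2\ell}z)$ alone into $(1-z^{2\ell})$.

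The paper's key step, which you are missing, is to invoke the inductive hypothesis \emph{twice} at the same level~$\ell$, once for $\mathsf{d}_{r,\ell}$ and once for the shifted $\mathsf{d}_{r+2,\ell}$. Since $\mathsf{d}_{r,\ell+1} = (1-q^{r+2\ell+2}z)\,\mathsf{d}_{r,\ell} = (1-q^{r}z)\,\mathsf{d}_{r+2,\ell}$, this yields two expressions each equal to $(1-z^2)\cdots(1-z^{2\ell})$ times a single linear factor, namely $(1-q^{r+2\ell+2}z)$ and $(1-q^{r}z)$ respectively. The B\'ezout combination of \emph{those} two linear factors is what produces the missing $(1-z^{2(\ell+1)})$.

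Finally, your non-vanishing argument (``a degree or lowest-term argument in $z$ suffices'') does not work as stated: the new coefficients are of the form $f_1\, p_{r,\ell,i} + f_2\, p_{r+2,\ell,i-1}$, and nothing prevents such a linear combination from cancelling on degree grounds alone. The paper instead argues by specialization: if $p_{r,\ell+1,i}=0$, evaluate the identity at any $q$ that is a root of $1-q^{r+2i}z$; every other summand on the left vanishes (each contains $1-q^{r+2i}z$ as a factor), forcing the nonzero polynomial $(1-z^2)\cdots(1-z^{2(\ell+1)})$ to vanish there, a contradiction. This is where the hypothesis $r\notin\{0,-2,\ldots,-2\ell\}$ enters, ensuring $r+2i\neq 0$ so that such roots exist.
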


\begin{proof}
We prove the statement by induction on $\ell$.  For the base case $\ell=0$, we have $\frac{(1-q^{r} z)}{(1-q^{r} z)} = 1$
and hence the statement is true.

Now assume that the statement is true for any integer $r>0$ and for some fixed $\ell\geqslant 0$. We note that then
$\mathsf{d}_{r,\ell+1} = \mathsf{d}_{r,\ell}(1-q^{r+2\ell+2}z) = (1-q^{r}z)\mathsf{d}_{r+2,\ell}$ and hence
\begin{align}\label{eq:one}
\sum_{i=0}^\ell p_{r,\ell,i} \frac{\mathsf{d}_{r,\ell}(1-q^{r+2\ell+2}z)}{1-q^{r+2i}z}
= (1-z^2)(1-z^4)\cdots(1-z^{2\ell})(1-q^{r+2\ell+2}z)
\end{align}
and
\begin{align}\label{eq:two}
\sum_{i=0}^\ell p_{r+2,\ell,i} \frac{(1-q^{r}z)\mathsf{d}_{r+2,\ell}}{1-q^{r+2+2i}z}
= (1-z^2)(1-z^4)\cdots(1-z^{2\ell})(1-q^{r}z)~.
\end{align}

Now by Proposition \ref{prop:maincalc} and Note \ref{rem:div},
there exists $f_1,f_2 \in \mathbb{Q}[z,q,q^{-1}]$ such that
$f_1 (1-q^{r+2\ell+2}z) + f_2 (1-q^{r}z) = 1-z^{2\ell+2}$.  Multiply Equation~\eqref{eq:one} by $f_1$
and Equation~\eqref{eq:two} by $f_2$. Their sum is equal to
\begin{align} \label{eq:three}
\sum_{i=0}^{\ell+1} (f_1 p_{r,\ell,i} + f_2 p_{r+2,\ell,i-1}) \frac{\mathsf{d}_{r,\ell+1}}{1-q^{r+2i}z}
= (1-z^2)(1-z^4)\cdots(1-z^{2\ell+2})~.
\end{align}
This implies that we should choose $p_{r,\ell+1,i} = f_1 p_{r,\ell,i} + f_2 p_{r+2,\ell,i-1}$ (with the convention
that $p_{r+2,\ell,-1}=0$), however we need to be assured
that this $p_{r,\ell+1,i} \neq 0$.

To this end we note that the right hand side of Equation \eqref{eq:three} is a polynomial in $z$.  We also note
that $\frac{\mathsf{d}_{r,\ell+1}}{1-q^{r+2i}z}$ vanishes at all of the roots of the polynomial
in $q$ of $1-q^{r+2j}z$ for $j \neq i$.  Assume by way of contradiction that
$p_{r,\ell+1,i}=0$, then the left hand side is equal to zero for any $q$ equal to a root of $1-q^{r+2i}z$ while
the right hand side is non-zero. This proves the claim.
\end{proof}

In Proposition \ref{prop:first_half}, we can sharpen the right hand side if $r$ is even.

\begin{cor}\label{cor:sharp}
For integers $r>0$ such that $r$ is even and $\ell\geqslant 0$, let
$\mathsf{d}_{r,\ell} = (1-q^{r} z)(1-q^{r+2} z) \cdots (1-q^{r+2\ell} z)$. Then
there exists non-zero polynomials $p_{r,\ell,i} \in \mathbb{Z}[z,q]$ for $0 \leqslant i \leqslant \ell$ such that
\begin{align*}
\sum_{i=0}^\ell p_{r,\ell,i} \frac{\mathsf{d}_{r,\ell}}{1-q^{r+2i}z} = (1-z)(1-z^2)\cdots(1-z^{\ell}).
\end{align*}
\end{cor}

\begin{proof}
In the proof of Proposition \ref{prop:first_half}, since $r$ and $r+2\ell+2$ are
both even it is possible to choose $f_1$ and $f_2$ such
that $f_1(1-q^{r+2\ell+2}z) + f_2 (1-q^{r}z) = 1-z^{\ell+1}$ using Proposition \ref{prop:maincalc}
and Note \ref{rem:div}.
\end{proof}

\begin{proposition}\label{prop:evem_key}
For any $n,k\geqslant 0$ and $n+k > 0$ define
\[
\mathsf{d}'_{n,k} = (1-q^{2} z)(1-q^{4} z) \cdots (1-q^{2n} z) (1-z/q^2)(1-z/q^4)\cdots (1- z/q^{2k})~.
\]
There exists non-zero
polynomials $p_{n,k,i} \in \mathbb{Z}[z,q,q^{-1}]$ for $1 \leqslant i \leqslant n$
and $r_{n,k,i} \in \mathbb{Z}[z,q,q^{-1}]$ for $1 \leqslant i \leqslant k$ such that
\begin{align*}
\sum_{i=1}^{n}& p_{n,k,i} \frac{\mathsf{d}'_{n,k}}{1-q^{2i}z}
+\sum_{i=1}^k r_{n,k,i} \frac{\mathsf{d}'_{n,k}}{1-z/q^{2i}}= (1-z)(1-z^2)\cdots (1-z^{n+k})~.
\end{align*}
\end{proposition}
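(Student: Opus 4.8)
The plan is to prove this by induction on $n+k$, closely mirroring the argument of Proposition~\ref{prop:first_half} but now handling the two "directions" of $q$-factors simultaneously. The base case $n+k=1$ is either $n=1,k=0$ (which is Corollary~\ref{cor:sharp} with $r=2,\ell=0$, giving $\frac{1-q^2z}{1-q^2z}=1=(1-z)$... wait, more precisely the $\ell=0$ instance needs $(1-z)$ on the right, so one takes $p_{1,0,1}=1-z$) or the symmetric case $n=0,k=1$ obtained by replacing $q\mapsto q^{-1}$. For the inductive step, suppose the identity holds for all pairs summing to $N=n+k\geqslant 1$, and consider a pair summing to $N+1$. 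Without loss of generality (again using the $q\mapsto q^{-1}$ symmetry, which swaps the roles of $n$ and $k$) assume $n\geqslant 1$, so that there is a factor $(1-q^{2n}z)$ available at the "top."

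The key step is to peel off factors from both ends. Write $\mathsf{d}'_{n,k} = \mathsf{d}'_{n-1,k}\cdot(1-q^{2n}z)$, and separately $\mathsf{d}'_{n,k} = (1-z/q^{2k})\cdot\mathsf{d}'_{n,k-1}$ when $k\geqslant 1$; when $k=0$ one only needs the first decomposition together with Corollary~\ref{cor:sharp}. In the generic case $n\geqslant 1$, $k\geqslant 1$, the inductive hypothesis applied to $\mathsf{d}'_{n-1,k}$ gives an expression equal to $(1-z)(1-z^2)\cdots(1-z^{N})$, and applied to $\mathsf{d}'_{n,k-1}$ gives another such expression. Multiplying the first by $(1-q^{2n}z)$ and the second by $(1-z/q^{2k})$ produces two identities whose right-hand sides are $(1-z)\cdots(1-z^{N})(1-q^{2n}z)$ and $(1-z)\cdots(1-z^{N})(1-z/q^{2k})$ respectively. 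Now invoke Proposition~\ref{prop:maincalc}: since $1-z^{\frac{2n+2k}{\gcd(2n,2k)}}\in\langle 1-q^{2n}z,\ 1-z/q^{2k}\rangle_{qz}$, and $\frac{2n+2k}{\gcd(2n,2k)}=\frac{n+k}{\gcd(n,k)}$ divides $N+1=n+k$, Remark~\ref{rem:div} furnishes $f_1,f_2\in\mathbb{Q}[z,q,q^{-1}]$ with $f_1(1-q^{2n}z)+f_2(1-z/q^{2k})=1-z^{N+1}$. Multiplying the first identity by $f_1$, the second by $f_2$, and adding yields an expression of the correct shape equal to $(1-z)(1-z^2)\cdots(1-z^{N+1})$, where the new coefficient of $\frac{\mathsf{d}'_{n,k}}{1-q^{2i}z}$ is $f_1 p_{n-1,k,i}+f_2 p_{n,k-1,i}$ (with the appropriate convention for out-of-range indices), and similarly for the $r$'s.

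The main obstacle — exactly as in Proposition~\ref{prop:first_half} — is verifying that the newly constructed coefficients are \emph{nonzero}, since $f_1$ and $f_2$ are not unique and could in principle conspire to cancel. I would handle this by the same specialization trick: in the final identity, for a fixed index $i$ with $1\leqslant i\leqslant n$, every term $\frac{\mathsf{d}'_{n,k}}{1-q^{2j}z}$ with $j\neq i$ vanishes upon specializing $q$ to a root (in $q$) of $1-q^{2i}z$, as does every term $\frac{\mathsf{d}'_{n,k}}{1-z/q^{2j}z}$; yet the right-hand side $(1-z)\cdots(1-z^{N+1})$ is a nonzero polynomial in $z$ alone, hence does not vanish identically under that specialization. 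Therefore $p_{n,k,i}\neq 0$; the argument for $r_{n,k,i}\neq 0$ is identical, specializing instead at a root of $1-z/q^{2i}$. One subtlety worth a sentence in the writeup: when $k=0$ the term $\sum r$ is empty and one should appeal directly to Corollary~\ref{cor:sharp} rather than the two-sided peeling, and symmetrically when $n=0$; so the induction really branches on whether both $n,k$ are positive or one of them vanishes. Everything else is the routine bookkeeping of tracking which polynomial ring the coefficients live in, and I would not belabor it.
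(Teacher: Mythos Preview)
Your proposal is correct and follows essentially the same approach as the paper: induction on $n+k$ with the $k=0$ and $n=0$ cases handled directly via Corollary~\ref{cor:sharp} (after multiplying by the missing factor $1-z^{n}$ or $1-z^{k}$), and the generic step combining the two inductive identities using Proposition~\ref{prop:maincalc} and Remark~\ref{rem:div} to produce the extra factor $1-z^{n+k}$. The nonzero argument by specializing $q$ at a root of $1-q^{2i}z$ (respectively $1-z/q^{2i}$) is also exactly what the paper does.
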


\begin{proof}
This result is proved by induction on $k$ and $n$. 

The base case $k=0$ is Corollary~\ref{cor:sharp} with $r=2$ and $\ell=n-1$.
However we need to multiply both sides of that result
by $1-z^{n}$ to agree with the base case.

The base case $n=0$ is again a special case of Corollary \ref{cor:sharp}
with $r=2$ and $\ell=k-1$
but with $q$ replaced by $q^{-1}$.
In that case we need to multiply both sides of the
equation by $1-z^{k}$ to agree with the base case.

Note that we have for positive $n$ and $k$,
\[
\mathsf{d}'_{n,k-1} (1- z/q^{2k}) = (1- q^{2n}z)\mathsf{d}'_{n-1,k} = \mathsf{d}'_{n,k}~.
\]

Now assume by way of induction that the proposition is true for smaller values of
either $k$ or $n$.
We have by the induction hypothesis that
\begin{align*}
&\sum_{i=1}^{n} p_{n,k-1,i} \frac{\mathsf{d}'_{n,k-1}(1- z/q^{2k})}{1-q^{2i}z}
+\sum_{i=1}^{k-1} r_{n,k-1,i} \frac{\mathsf{d}'_{n,k-1}(1- z/q^{2k})}{1-z/q^{2i}}\\
&= (1-z)(1-z^2)\cdots (1-z^{n+k})(1- z/q^{2k})~.\nonumber
\end{align*}
We also have by induction
\begin{align*}
&\sum_{i=1}^{n-1} p_{n-1,k,i} \frac{(1- q^{2n} z)\mathsf{d}'_{n-1,k}}{1-q^{2i}z}
+\sum_{i=1}^{k} r_{n-1,k,i} \frac{(1- q^{2n} z)\mathsf{d}'_{n-1,k}}{1-z/q^{2i}}\\
&= (1-z)(1-z^2)\cdots (1-z^{n+k})(1- q^{2n} z)~.\nonumber
\end{align*}

By Proposition \ref{prop:maincalc} and Note \ref{rem:div}
there exist $f_1, f_2 \in \mathbb{Q}[z,q,q^{-1}]$ such
that $f_1 (1- z/q^{2k}) + f_2 (1-q^{2n} z)= 1 - z^{n+k}$.
Therefore choosing $p_{n,k,i} = f_1 p_{n,k-1,i} + f_2 p_{n-1,k,i}$
for $1 \leqslant i \leqslant n$ (with the convention $p_{n-1,k,n} = 0$)
and $r_{n,k,i} = f_1 r_{n,k-1,i} + f_2 r_{n-1,k,i}$ for $1 \leqslant i \leqslant k$
(with the convention that $r_{n,k-1,k}=0$) satisfies the conditions
of the proposition for $n$ and $k$.  Hence, by induction the proposition holds.

The argument that all polynomials $p_{n,k,i}$ and $r_{n,k,i}$ are non-zero
is the same mutatis mutandi as the argument at the end of Proposition \ref{prop:first_half}.
\end{proof}

\begin{cor} \label{cor:even_case}
For $w>0$ and $w$ even, there exists polynomials
$s_{w,i} \in \mathbb{Q}[z,q,q^{-1}]$ for $0 \leqslant i \leqslant w+1$, $i \neq \frac{w}{2}$
such that
\[
\frac{1}{(1-q^w z)(1-q^{w-2}z) \cdots (1-q^{-w}z)}
= \sum_{\substack{i=0\\i\neq \frac{w}{2}}}^{w} \frac{s_{w,i}}{(1-q^{w-2i}z)\mathsf{den}_w(z)},
\]
where $\mathsf{den}_w(z) = (1-z)\cdot(1-z)(1-z^2)\cdots(1-z^w)$~.
\end{cor}

\begin{proof}
In Proposition \ref{prop:evem_key} choose $n=k=\frac{w}{2}$ and $s_{w,\frac{w}{2}-i} = p_{\frac{w}{2},\frac{w}{2}, i}$
for $1 \leqslant i < \frac{w}{2}$ and $s_{w,i} = r_{\frac{w}{2},\frac{w}{2}, i - \frac{w}{2}}$ for $\frac{w}{2} < i \leqslant w$.
Divide both sides of the equation by
\[
(1-q^w z)(1-q^{w-2}z) \cdots (1-q^{-w}z) \mathsf{den}_w(z)~.
\]
\end{proof}

The proof of the next proposition is similar to the last, but with minor modifications
because the exponents are not even.

\begin{cor}\label{cor:odd_case}
For $w \geqslant 0$ and $w$ odd, there exists polynomials $t_{w,i} \in
\mathbb{Q}[z,q,q^{-1}]$ for $0 \leqslant i \leqslant w$ such that
\[
\frac{1}{(1-q^w z)(1-q^{w-2}z) \cdots (1-q^{-w}z)}
= \sum_{i=0}^w \frac{t_{w,i}}{(1-q^{w-2i}z)\mathsf{den}_w(z)}
\]
where
$\mathsf{den}_w(z) = (1-z^2)(1-z^4)\cdots(1-z^{2w})$~.
\end{cor}

\begin{proof}  In Proposition \ref{prop:first_half}, we set
$r=-w$ and $\ell=w$.  The equation stated in the corollary is
precisely the equation in the proposition with $t_{w,i} = p_{-w,w,i}$
and both sides of the equation are divided by $\mathsf{d}_{-w,w} \mathsf{den}_w(z)$.
\end{proof}

We now state the main result of this section.

\begin{theorem}\label{th:semi-final}
For $w>0$ with $w$ even, define
\[
d_w(z,q) := (1-z)\prod_{i=1}^w (1-z^i) \prod_{i=1}^{\frac{w}{2}} (1-q^{2i} z)
\]
and for $w$ odd let
\[
d_w(z,q) := \prod_{i=1}^w (1-z^{2i}) \prod_{i=1}^{\frac{w+1}{2}} (1-q^{2i-1} z)~.
\]
Then for all $\mu \vdash w$,
\[
d_w(z,q) A_\mu(z,q) \in \mathbb{Z}[z,q]~.
\]
\end{theorem}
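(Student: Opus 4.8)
The plan is to substitute the explicit formula for $\QEhr_\mu$ from Theorem~\ref{th:grsw_gf} into the partial-fraction decomposition of Corollary~\ref{cor:even_case} (when $w$ is even) or Corollary~\ref{cor:odd_case} (when $w$ is odd), and then carry out the $\mathsf{PT}^q$ operator term by term. By Theorems~\ref{theorem.A QEhr} and~\ref{th:grsw_gf},
\[
A_\mu(z,q)=\mathsf{PT}^q\,(q-q^{-1})\,\QEhr_\mu(z,q),\qquad
\QEhr_\mu(z,q)=\frac{N_\mu(z,q)}{\prod_{i=0}^{w}(1-q^{w-2i}z)},
\]
where $N_\mu(z,q)=\sum_{T\in\SYT(\mu)}q^{-2\maj(T)}(zq^{w})^{\des(T)}\in\mathbb{Z}[z,q,q^{-1}]$ has only nonnegative powers of $z$. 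Assume first $w$ even. Substituting Corollary~\ref{cor:even_case} and multiplying by $(q-q^{-1})N_\mu$ yields
\[
(q-q^{-1})\,\QEhr_\mu(z,q)=\frac{1}{\mathsf{den}_w(z)}\sum_{\substack{i=0\\ i\neq w/2}}^{w}\frac{P_i(z,q)}{1-q^{w-2i}z},\qquad
P_i:=(q-q^{-1})\,N_\mu\,s_{w,i}\in\mathbb{Q}[z,q,q^{-1}],
\]
with $\mathsf{den}_w(z)=(1-z)\prod_{j=1}^{w}(1-z^{j})$, and each $P_i$ again polynomial in $z$. As $\mathsf{PT}^q$ is additive and commutes with multiplication by the $q$-free series $1/\mathsf{den}_w(z)\in\mathbb{Q}[[z]]$, everything reduces to computing $\mathsf{PT}^q$ on the individual terms $q^{a}z^{b}/(1-q^{c}z)$ with $a\in\mathbb{Z}$, $b\geqslant 0$, and $c=w-2i\in\mathbb{Z}\setminus\{0\}$.

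This monomial computation is the heart of the matter. Expanding as a power series in $z$, $q^{a}z^{b}/(1-q^{c}z)=\sum_{n\geqslant 0}q^{a+cn}z^{b+n}$, so $\mathsf{PT}^q$ retains exactly the terms with $a+cn\geqslant 0$. If $c>0$, these are the $n\geqslant n_0:=\max(0,\lceil -a/c\rceil)$, hence
\[
\mathsf{PT}^q\,\frac{q^{a}z^{b}}{1-q^{c}z}=\frac{q^{a+cn_0}z^{b+n_0}}{1-q^{c}z},
\]
again of the shape $q^{a'}z^{b'}/(1-q^{c}z)$ with $a',b'\geqslant 0$ and the same single denominator factor. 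If $c<0$, write $c=-e$ with $e>0$: then $a-en\geqslant 0$ for only finitely many $n$ (for none if $a<0$), so
\[
\mathsf{PT}^q\,\frac{q^{a}z^{b}}{1-q^{c}z}=\sum_{0\leqslant n\leqslant a/e}q^{a-en}z^{b+n}\ \in\ \mathbb{Z}[z,q]
\]
is a genuine polynomial with nonnegative exponents. Applying this term by term, $\mathsf{PT}^q\bigl(P_i/(1-q^{w-2i}z)\bigr)$ equals $Q_i(z,q)/(1-q^{w-2i}z)$ with $Q_i\in\mathbb{Q}[z,q]$ when $w-2i>0$ (that is $i<w/2$), and equals some $R_i\in\mathbb{Q}[z,q]$ when $w-2i<0$ (that is $i>w/2$); the omitted index $i=w/2$, which would have produced a $(1-z)$ in the denominator, is exactly what Corollary~\ref{cor:even_case} was arranged to avoid. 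The distinct positive values of $w-2i$ over $0\leqslant i<w/2$ are precisely $2,4,\dots,w$, so from
\[
A_\mu(z,q)=\frac{1}{\mathsf{den}_w(z)}\left(\sum_{0\leqslant i<w/2}\frac{Q_i(z,q)}{1-q^{w-2i}z}+\sum_{w/2<i\leqslant w}R_i(z,q)\right),
\]
multiplying by $d_w(z,q)=\mathsf{den}_w(z)\prod_{i=1}^{w/2}(1-q^{2i}z)=(1-z)\prod_{i=1}^{w}(1-z^{i})\prod_{i=1}^{w/2}(1-q^{2i}z)$ clears every denominator and gives $d_w(z,q)A_\mu(z,q)\in\mathbb{Q}[z,q]$. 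The case $w$ odd is identical, using Corollary~\ref{cor:odd_case}: here $\mathsf{den}_w(z)=\prod_{i=1}^{w}(1-z^{2i})$, every index $0\leqslant i\leqslant w$ occurs (since $w-2i$ is always odd, hence nonzero), the positive values of $w-2i$ are precisely $1,3,\dots,w$, and multiplying by $d_w(z,q)=\prod_{i=1}^{w}(1-z^{2i})\prod_{i=1}^{(w+1)/2}(1-q^{2i-1}z)$ clears all denominators.

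Finally, to sharpen $\mathbb{Q}[z,q]$ to $\mathbb{Z}[z,q]$, I would invoke integrality of the plethysm coefficients, which costs no extra computation: since $s_\mu[s_h](q^{-1},q)\in\mathbb{Z}[q,q^{-1}]$ for every $h\geqslant 0$, we have $\QEhr_\mu\in\mathbb{Z}[q,q^{-1}][[z]]$, hence $A_\mu=\mathsf{PT}^q(q-q^{-1})\QEhr_\mu\in\mathbb{Z}[q][[z]]$, and therefore $d_w(z,q)A_\mu(z,q)\in\mathbb{Z}[q][[z]]$ because $d_w(z,q)\in\mathbb{Z}[z,q]$. An element of $\mathbb{Z}[q][[z]]$ that also lies in $\mathbb{Q}[z,q]$ is a polynomial each of whose $z$-coefficients lies in $\mathbb{Z}[q]$, hence lies in $\mathbb{Z}[z,q]$, which finishes the proof. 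The main obstacle is the bookkeeping in the middle step: one must verify that the only denominators in $q$ surviving $\mathsf{PT}^q$ are the factors $1-q^{c}z$ with $c>0$, that these $c$ are exactly the ones listed in $d_w$, and that the pieces with $c<0$ (and the single excluded $c=0$) contribute nothing further --- the former because $\mathsf{PT}^q$ turns them into honest polynomials, the latter because it was absorbed into $\mathsf{den}_w(z)$ by the already-established Corollaries~\ref{cor:even_case} and~\ref{cor:odd_case}.
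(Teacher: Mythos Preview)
Your proof is correct and follows essentially the same route as the paper: apply Corollary~\ref{cor:even_case} or~\ref{cor:odd_case} to the explicit expression of Theorem~\ref{th:grsw_gf}, compute $\mathsf{PT}^q$ on each monomial $q^a z^b/(1-q^cz)$ (obtaining a term with the same denominator when $c>0$ and a polynomial when $c<0$), and observe that multiplying by $d_w(z,q)$ clears every surviving factor. Your explicit integrality argument upgrading $\mathbb{Q}[z,q]$ to $\mathbb{Z}[z,q]$ via $A_\mu\in\mathbb{Z}[q][[z]]$ is a nice addition that the paper leaves implicit.
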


\begin{proof}
We use that $A_\mu(z,q) = \mathsf{PT}^q (q-q^{-1}) \QEhr_\mu(z,q)$.
By Theorem~\ref{th:grsw_gf}, Corollary \ref{cor:even_case} and Corollary \ref{cor:odd_case}, we have that
$\mathsf{den}_w(z) (q - q^{-1})\QEhr_\mu(z,q)$ is a sum of terms of the
form
\[
\frac{z^a q^b}{1-q^{w-2i} z}
\]
for $a \geqslant 0$, $b \in \mathbb{Z}$ and $0 \leqslant i \leqslant w$ and $i \neq \frac{w}{2}$.

Now we need to apply some analysis to compute the result of applying the $\mathsf{PT}^q$ operator
to these terms.  First we consider the terms such that $0 \leqslant i < \frac{w}{2}$. In this case
\[
\frac{z^a q^b}{1-q^{w-2i} z} = z^a q^b(1+q^{w-2i} z + q^{2(w-2i)} z^2 + q^{3(w-2i)} z^3 + \cdots )
\]
if $b\geqslant 0$. Hence $\mathsf{PT}^q \frac{z^a q^b}{1-q^{w-2i} z} = \frac{z^a q^b}{1-q^{w-2i} z}$.  Otherwise
$b$ is negative finite and then $\mathsf{PT}^q$ kills all the terms $z^a q^b q^{k(w-2i)} z^k$
such that $b + k(w-2i)<0$.  Let $d = \lfloor \frac{b}{w-2i} \rfloor$. Then
\begin{align}
\mathsf{PT}^q \frac{z^a q^b}{1-q^{w-2i} z}
&= \frac{z^a q^b}{1-q^{w-2i} z} - z^a q^b(1+q^{w-2i} z + q^{2(w-2i)} z^2 + \cdots
+ q^{d (w-2i)} z^d)\nonumber\\
&=\frac{z^a q^b}{1-q^{w-2i} z} - z^a q^b\frac{1-q^{(d+1)(w-2i)}z^{d+1}}{1-q^{w-2i} z}\nonumber\\
&=\frac{q^{(d+1)(w-2i)+b} z^{d+a+1}}{1-q^{w-2i} z}.
\label{eq:worst_case}
\end{align}
The important observation in that expression is that $(d+1)(w-2i)+b \geqslant 0$ and hence the
numerator is an element of $\mathbb{Z}[z,q]$.

Next we consider the terms with $\frac{w}{2} < i \leqslant w$.  If $b<0$, then
$\mathsf{PT}^q \frac{z^a q^b}{1-q^{w-2i} z} = 0$.  If $b \geqslant 0$, the operator
$\mathsf{PT}^q$ kills all the terms $z^a q^b q^{k(w-2i)} z^k$ with $b<k(w-2i)$. Let $d = \lfloor \frac{b}{w-2i} \rfloor$. Then
\[
\mathsf{PT}^q \frac{z^a q^b}{1-q^{w-2i} z} =
z^a q^b(1+q^{w-2i} z + q^{2(w-2i)} z^2 + \cdots + q^{d (w-2i)} z^d).
\]
We conclude that $\mathsf{PT}^q \mathsf{den}_w(z) (q-q^{-1}) \QEhr_\mu(z,q)$ is a sum of polynomials and
rational functions of the form seen in Equation \eqref{eq:worst_case} where the denominator is
of the form $1-q^{w-2i} z$ with $0 \leqslant i < \frac{w}{2}$.  Therefore if we multiply
this expression by $\prod_{i=1}^{\frac{w}{2}} (1-q^{2i} z)$ if $w$ is even or
$\prod_{i=1}^{\frac{w+1}{2}} (1-q^{2i-1} z)$ if $w$ is odd, then the resulting expression
is a polynomial.
\end{proof}

We have found limitations computing
the $\PT[q]$ operators on the expressions $(q-q^{-1}) \QEhr_\mu(z,q)$
as rational functions using computer packages that have implemented MacMahon operators.
The expressions can explode in complexity and
this prevented us from computing $A_\mu(z,q)$ beyond roughly $n=5$
using these implementations.
We were able to surpass these barriers using Theorem \ref{th:semi-final} and computing
the polynomial
\[
\sum_{h\leqslant m} (q-q^{-1})s_\mu\Big[\frac{q^h-q^{-h}}{q-q^{-1}}\Big] z^h
\]
for a sufficiently large $m$ and applying $\PT[q]$ to the polynomial.
Multiplying this expression by $d_\mu(z,q)$
produces a polynomial of the form $p_{\mathrm{low}}(z,q) + p_{\mathrm{high}}(z,q) \in \mathbb{Z}[q,z]$,
where the degrees of $p_{\mathrm{high}}(z,q)$ are larger than $m$ in $q$ and the
polynomial $p_{\mathrm{low}}(z,q)$ is independent of $m$.
Our computations suggest that taking $m \geqslant w^2$ is sufficient to compute
$p_{\mathrm{low}}(z,q)$, but this is by no means a sharp bound.
An expression for $A_\mu(z,q)$ is $\frac{p_{\mathrm{low}}(z,q)}{d_\mu(z,q)}$.

\section{Reciprocity}
\label{section.reciprocity}

In Section \ref{s:GeometricPicture}, we gave a geometric expression for $A_\mu(z,q)$ in terms of a quantum Ehrhart series and an integral operator 
$\PT[q](q-q^{-1})$ based on MacMahon's Partition Analysis.
As we saw in Section~\ref{ss:GeneratingFunctions}, the resulting rational function is not immediately a $q$-Ehrhart series, but for any given
 example we can express it as a positive sum of $q$-Ehrhart series of rational cones. We conjecture this is always possible.
\begin{conj}
\label{conjecture.polytope}
    The generating function $A_\mu(z,q)$ is a weighted generating function of lattice points of a disjoint union of convex rational half-open cones.
\end{conj}

In this section, we give supporting evidence for the conjecture. We begin by discussing a similar conjecture of Mulmuley.

\subsection{A conjecture of Mulmuley}
\label{sec:KM}
    In a collection of open problems and conjectures, Kirillov writes that one \textit{expects} the generating function
        \begin{equation}
            \label{eq:Kirillov}
        \mathsf{B}_{\mu[\nu]}^\lambda(t) = \sum_{n\geqslant0} \aaa_{n\mu[\nu]}^{n\lambda} t^n
        \end{equation}
    to be rational
    \cite[(\maltese) right after Conj.~2.14]{Kirillov}. This was proved by Mulmuley \cite[Thm.~1.6.1 (a)]{GCT6}, who furthermore conjectured the following.
    \begin{conjecture}[{\cite[Hypothesis~1.6.4 (1)]{GCT6}}]
        The generating function $\mathsf{B}_{\mu[\nu]}^\lambda(t)$ is the Ehrhart series of a polytope.
    \end{conjecture}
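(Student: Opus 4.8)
Since this conjecture is in fact false --- as shown by Kahle and Michałek~\cite{KahleMichalek18} --- the plan here is not to prove it but to \emph{disprove} it, the key tool being Ehrhart--Macdonald reciprocity for closed lattice polytopes. First I would record the numerical constraints forced on any Ehrhart series of a lattice polytope. If $\tilde{\mathsf{A}}_{\mu[\nu]}^\lambda(t) = \Ehr_P(t)$ for a lattice polytope $P$ of dimension $d$, then: (i) in lowest terms the denominator is exactly $(1-t)^{d+1}$, so no nontrivial cyclotomic factors may appear; (ii) writing $\tilde{\mathsf{A}}_{\mu[\nu]}^\lambda(t) = h^*(t)/(1-t)^{d+1}$, the numerator $h^*$ has nonnegative integer coefficients (Stanley positivity) and $h^*(0) = \aaa_{0\mu[\nu]}^{0\lambda} = 1$; and (iii) $\deg h^* \leqslant d$. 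Constraint (iii) is Ehrhart reciprocity in disguise: from
\[
\Ehr_P(1/t) = (-1)^{d+1}\,\Ehr_{P^\circ}(t), \qquad \Ehr_{P^\circ}(t) = \sum_{n \geqslant 1} \#(n P^\circ \cap \ZZ^d)\, t^n,
\]
one sees that $(-1)^{d+1}\tilde{\mathsf{A}}_{\mu[\nu]}^\lambda(1/t)$ would have to be a power series in $t$ with nonnegative coefficients and a zero at $t = 0$, which after clearing denominators is precisely the bound $\deg h^* \leqslant d$. Hence it suffices to exhibit a single triple $(\mu,\nu,\lambda)$ whose generating function violates (i), (ii), or (iii).

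To find such a triple I would proceed computationally. The stretched coefficients $\aaa_{n\mu[\nu]}^{n\lambda}$ are quasipolynomial in $n$ (this is built into Mulmuley's rationality proof~\cite[Thm.~1.6.1(a)]{GCT6}), so for a candidate triple one expands $s_{n\mu}[s_\nu]$ for sufficiently many small $n$, extracts the coefficient of $s_{n\lambda}$, interpolates the rational function $\tilde{\mathsf{A}}_{\mu[\nu]}^\lambda(t)$, reads off the pole order at $t = 1$ to pin down the only admissible dimension $d$, and then tests whether the numerator over $(1-t)^{d+1}$ has a negative coefficient or degree exceeding $d$ --- equivalently, whether $(-1)^{d+1}\tilde{\mathsf{A}}_{\mu[\nu]}^\lambda(1/t)$ fails to be a nonnegative power series vanishing at the origin. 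Kahle and Michałek carry this out for an explicit small instance over $\GL_3$, where the point of failure is exactly constraint (iii); this is the ``wrong-outcome'' counterpart of the genuine reciprocity $A_\mu(z^{-1},q^{-1}) = (-1)^{w} z^{2} A_{\mu'}(z,q)$ established in Theorem~\ref{thm:reciprocity}.

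The main obstacle is locating the counterexample: almost every small triple produces a rational function fully consistent with being an Ehrhart series, so one needs either a systematic exact-arithmetic search over small $\mu, \nu, \lambda$ or a structural reason predicting a reciprocity relation with an incompatible sign or degree shift. Once the conjecture is refuted, what survives --- and what the present paper's Conjecture~\ref{conj:Ehr} is modelled on --- is the weakening that $\tilde{\mathsf{A}}_{\mu[\nu]}^\lambda(t)$ (and, in our setting, $A_\mu(z,q)$) be a nonnegative combination of Ehrhart series of \emph{half-open} rational polytopes or cones; this evades obstruction (iii) and, as far as we know, remains open.
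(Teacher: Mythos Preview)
Your overall plan matches the paper's treatment: this is not a statement to be proved but one already disproved by Kahle--Michałek via Ehrhart reciprocity, and the paper merely outlines their argument. Two points deserve correction, however. First, the explicit counterexample the paper records is $\tilde{\mathsf{A}}_{(4)[(3)]}^{(7,5)}(t)=\mathsf{A}_{(3)[(4)]}^{(7,5)}(t)$, which is an $\SL_2$ instance ($\lambda=(7,5)$ has two parts), not a $\GL_3$ one --- this is precisely why the paper singles it out. Second, the paper's reciprocity argument is both simpler and stronger than your $h^*$-constraint framing: assuming such a $P$ exists, reciprocity identifies $(-1)^{\dim P+1}\tilde{\mathsf{A}}(t^{-1})$ with the Ehrhart series of $P^\circ$, and one computes directly that $\ehr_{P^\circ}(1)=1>0=\ehr_P(1)$, contradicting $P^\circ\subseteq P$. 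This containment inequality is the crux; your constraints (i)--(iii) as stated presuppose a \emph{lattice} polytope (no cyclotomic denominators, $\deg h^*\leqslant d$), whereas the paper's argument applies uniformly to rational polytopes and even to positive sums of Ehrhart series of closed convex polytopes, which is the full scope of the Kahle--Michałek theorem. Your closing remarks on the surviving half-open weakening are in line with the paper's discussion.
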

    Kahle and Michałek~\cite{KahleMichalek18} disproved the conjecture.
    \begin{thm}[{\cite{KahleMichalek18}}]
        Neither
        \[
        \mathsf{B}_{\mu[\nu]}^\lambda(t)
        = \sum_{n\geqslant0} \aaa_{n\mu[\nu]}^{n\lambda} t^n
        \quad\text{nor}\quad
        \mathsf{A}_{\mu[\nu]}^\lambda(t)
        = \sum_{n\geqslant0} \aaa_{\mu[n\nu]}^{n\lambda} t^n
        \]
        are the Ehrhart series of any union of closed convex polytopes.
    \end{thm}
    The counterexample is also a counterexample for the $\SL_2$ case of the conjecture:
    \begin{equation}
    \label{eq:KMcounterex}
    \mathsf{B}_{(4)[(3)]}^{(7,5)}(t)
    =
    \mathsf{A}_{(3)[(4)]}^{(7,5)}(t) 
    = \sum_{n\geqslant0} a_{3[4n]}^{[2n+1]} t^n.
    \end{equation}
    We outline their proof. Suppose there is a polytope $P$ for which \eqref{eq:KMcounterex} is the Ehrhart series. By Ehrhart reciprocity (cf.~Thm.~\ref{thm:Ehrhart reciprocity}),
    substituting $t$ for $t^{-1}$ in \eqref{eq:KMcounterex} gives the Ehrhart series of the interior $P^\circ$. After computing the coefficients of 
    both series explicitly, one obtains
    \[
    1 = \ehr_{P^\circ}(1) \leqslant \ehr_{P}(1) = 0,
    \]
    which is a contradiction. If the generating function was instead a positive sum of Ehrhart series, the contradiction would still hold.

    However, Kahle and Michałek do not consider the following two relaxed versions of the conjecture. The first is to half-open polytopes 
    (see Section \ref{ss:reciprocity}), which renders the Ehrhart reciprocity argument invalid. We remark that Mulmuley already considers 
    this weakening of the conjecture in \cite[\S3.1.2]{GCT6}. 
    
    The second is to consider series of the form 
    $\sum_{n\geqslant 0}\#\{v\in C\cap\ZZ^{w+1}\mid v_{w+1} = n\} t^n$ for some $(w+1)$-dimensional cone $C$. If the cone is pointed at 
    the origin, then this is the Ehrhart series of a $w$-dimensional polytope. Otherwise, it is a series of the form $t^k\Ehr_P(t)$ for some 
    polytope $P$, where $k$ is the $(w+1)$-st coordinate of the vertex of~$C$.

    Indeed, \cite[Thm.~5]{KahleMichalek18} implies that
    \[
    \mathsf{A}_{(3)[(4)]}^{(7,5)}(t) = \frac{t^{8} + t^{6} + 2 \, t^{4} + t^{2} + 1}{{\left(t^{6} - 1\right)} {\left(t^{3} - 1\right)}} = (t^{-1}+t^2) \Ehr_{(-\frac{1}{3},\frac{1}{3}]}(t)
    \]
    and its reciprocal is $\mathsf{A}_{(3)[(4)]}^{(7,5)}(t^{-1}) = (1+t^3)\Ehr_{[-\frac{1}{3},\frac{1}{3})}(t)$.
    Hence both of these are generating functions of lattice points is the union of two cones. We believe a weakening of Mulmuley's conjecture remains open.\medskip 

    We end by remarking that one can obtain their generating function from ours 
    by using the constant term operator 
    \[
	\CT[\lambda] \sum_{n=-\infty}^\infty a_n \lambda^n  = a_0
    \]
    as
    \[
    \mathsf{A}_{(3)[(4)]}^{(7,5)}(t) = \CT[q] \Big(\frac{1}{8q} \sum_{j,\ell} \ell A_3(j z,\ell q) {\Big|}_{z\mapsto t^{\frac{1}{4}}q^{-\frac{1}{2}}}\Big)\,,
    \]
    where $\ell$ ranges over $\{1,-1\}$ and $j$ ranges over the complex fourth roots of $1$, namely $j\in\{1,i,-1,-i\}$.
    The operations required to pass from one generating function to the other do not seem to have geometric interpretations. But note that the generating function $A_3(z,q)$ is shown to be the generating function of a union of two cones in Section \ref{ss:GeneratingFunctions}.

\subsection{The \texorpdfstring{$\hhh^*$}{h*} polynomial}

A major open problem in Ehrhart theory is to determine which rational functions are Ehrhart series of convex polytopes. A typical line of attack is to study the $\hhh^*$ polynomial of polytopes.
\begin{de}
Let $P$ be a rational polytope of dimension $w$.
The \emph{denominator of $P$} is the minimal integer $d$ such that $dP$ is a lattice polytope.
The Ehrhart series of $P$ is a rational function of the form
    \[
    \Ehr_P(z) = \frac{\hhh_{P}^*(z)}{(1-z^d)^{w+1}}
    \]
    for some polynomial $\hhh_{P}^*(z)$, which is called the \emph{$\hhh^*$ polynomial of $P$}.
\end{de}
When $d = 1$, the polytope is a lattice polytope. Hibi \cite{Hibi}, Stanley \cite{Stanley1991hilbert}, and Stapledon \cite{Stapledon} (among others) gave necessary conditions for a polynomial to be the $\hhh^*$ polynomial of a lattice polytope. Beck, Braun, and Vindas-Meléndez \cite{BBVM} generalized these to rational polytopes.
\begin{theorem}[$\aaa\bbb$-decomposition {\cite[Thm.~4.7]{BBVM}}]
    \label{thm: ab decomp}
    Let $P$ be a full-dimen\-sional rational polytope with denominator $d$ and let $\ell$ be the smallest positive integer such that $\ell P$ contains an interior lattice point. Then
    \[
    \frac{1+z+\cdots+z^{\ell-1}}{1+z+\cdots+z^{d-1}}
    \hhh^*_P(z) = \aaa(z) + z^\ell \bbb(z),
    \]
    where $\aaa(z)$ and $\bbb(z)$ are palindromic polynomials with nonnegative integer coefficients.
\end{theorem}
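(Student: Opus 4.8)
The plan is to produce $\aaa(z)$ and $\bbb(z)$ explicitly from $\hhh^*_P(z)$ and Ehrhart--Macdonald reciprocity, and then to establish the nonnegativity of their coefficients by a half-open cone decomposition in the spirit of Stapledon's argument for lattice polytopes. First I would record the reciprocity input: for a full-dimensional rational $P$ of denominator $d$ one has $\Ehr_{P^\circ}(z)=(-1)^{w+1}\Ehr_P(1/z)$, i.e.\ $\hhh^*_{P^\circ}(z)=z^{d(w+1)}\hhh^*_P(1/z)$; and since $\ell$ is the least dilation with an interior lattice point, $\Ehr_{P^\circ}(z)=\sum_{t\geqslant\ell}\ehr_{P^\circ}(t)z^t$ shows that $z^\ell$ divides $\hhh^*_{P^\circ}(z)$ exactly, so $\deg\hhh^*_P=d(w+1)-\ell$ and $\bar\hhh(z):=z^{d(w+1)-\ell}\hhh^*_P(1/z)=z^{-\ell}\hhh^*_{P^\circ}(z)$ is the genuine reversal of $\hhh^*_P$. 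I would then set
\[
\aaa(z):=\frac{\hhh^*_P(z)-z^\ell\bar\hhh(z)}{1-z^d},\qquad \bbb(z):=\frac{\bar\hhh(z)-\hhh^*_P(z)}{1-z^d},
\]
so that, by a one-line computation, $\aaa(z)+z^\ell\bbb(z)=\dfrac{(1-z^\ell)\hhh^*_P(z)}{1-z^d}=\dfrac{1+z+\cdots+z^{\ell-1}}{1+z+\cdots+z^{d-1}}\hhh^*_P(z)$; everything then reduces to showing that $\aaa,\bbb$ are honest polynomials with the claimed symmetry and nonnegativity.

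Next I would address polynomiality and palindromy together. Substituting $\hhh^*_P(1/z)=z^{\ell-d(w+1)}\bar\hhh(z)$ and $\bar\hhh(1/z)=z^{\ell-d(w+1)}\hhh^*_P(z)$ into the displayed formulas and clearing denominators gives $z^{dw}\aaa(1/z)=\aaa(z)$ and $z^{dw-\ell}\bbb(1/z)=\bbb(z)$, so once polynomiality is known the two are automatically palindromic of degrees $dw$ and $dw-\ell$. Polynomiality means $(1-z^d)$ divides $\hhh^*_P(z)-z^\ell\bar\hhh(z)$ and $\bar\hhh(z)-\hhh^*_P(z)$ in $\mathbb{Z}[z]$, and since $1-z^d$ is monic it suffices to check this over $\C$: evaluating at a $d$-th root of unity $\zeta$ and using reciprocity, the condition collapses to $\hhh^*_P(\zeta)\in\R$ for every $\gcd(\ell,d)$-th root of unity $\zeta$. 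I would obtain this from the residue-class decomposition of the Ehrhart quasi-polynomial $\ehr_P(t)$ modulo $\gcd(\ell,d)$: all residue pieces share the same leading coefficient $\operatorname{vol}(P)$, so the order-$(w+1)$ principal part of $\Ehr_P$ at $\zeta$, and hence $\hhh^*_P(\zeta)$ itself, is a positive real multiple of $\operatorname{vol}(P)$.

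The main obstacle is the nonnegativity of the coefficients of $\aaa$ and $\bbb$; here I would run Stapledon's argument at the level of the rational cone $C=\operatorname{cone}(P\times\{1\})\subseteq\R^{w+1}$, whose $z$-graded semigroup Hilbert series is $\Ehr_P(z)$ and for which $\ell$ is the least height of a lattice point in the relative interior. Choosing a regular (pulling) triangulation of $C$ and making it half-open so the half-open fundamental parallelepipeds tile $C$, the lattice points of these parallelepipeds graded by height sum to $\hhh^*_P(z)$; splitting each point's contribution according to whether its reflection through $C$ lands in the boundary subcomplex or in the interior at the governing dilation level should produce $(1-z^d)\aaa(z)$ from the boundary strata and $z^\ell(1-z^d)\bbb(z)$ from the interior strata, precisely because of the codegree bound $\ell$ and the reciprocity of the first step. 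Each piece is then a generating function of an explicit finite set of lattice points, forcing nonnegative integer coefficients. The genuinely new difficulty over the classical lattice case is bookkeeping the interaction between the denominator $d$ (the non-integrality of the ray generators of $C$) and the codegree $\ell$ in this half-open decomposition --- which is exactly the rational refinement of Stapledon's theorem carried out by Beck, Braun, and Vindas-Mel\'endez.
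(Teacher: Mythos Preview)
The paper does not prove this theorem; it is quoted verbatim from Beck--Braun--Vindas-Mel\'endez \cite[Thm.~4.7]{BBVM} and used only as a tool to test the numerical plausibility of Conjecture~\ref{conj:Ehr}. There is therefore no ``paper's own proof'' to compare against. Your outline is in fact a sketch of the BBVM argument itself (Stapledon's half-open cone decomposition, adapted to rational polytopes), and you say so explicitly in your last sentence.

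One point in your sketch is not quite right. For polynomiality of $\aaa$ and $\bbb$ you need $(1-z^d)\mid\bigl(\hhh^*_P(z)-z^{d(w+1)}\hhh^*_P(1/z)\bigr)$, which at a $d$-th root of unity $\zeta$ becomes $\hhh^*_P(\zeta)=\overline{\hhh^*_P(\zeta)}$; this must hold for \emph{every} $d$-th root of unity, not only for $\gcd(\ell,d)$-th roots. (If $\gcd(\ell,d)=1$ your stated condition is vacuous.) The leading-coefficient argument you give does not establish this. A clean way to get it is to note that $\hhh^*_P(z)-z^{d(w+1)}\hhh^*_P(1/z)=(1-z^d)^{w+1}\Ehr_{\partial P}(z)$, and since $\partial P$ is $(w-1)$-dimensional its Ehrhart series has denominator $(1-z^d)^{w}$, leaving a factor of $(1-z^d)$. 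Apart from this, your plan matches the published proof: the nonnegativity of $\aaa$ and $\bbb$ is the substantive part, and it is obtained exactly via the half-open triangulation of $\mathrm{cone}(P\times\{1\})$ that you describe.
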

The following theorem is a consequence of the $\aaa\bbb$-decomposition.
\begin{theorem}[{\cite[Thm.~4.8]{BBVM}}]\label{thm:ineqs}
Let $P$ be a rational polytope of dimension $w$ and denominator $d$. Let $\hhh^*_P(z)$ be its $\hhh^*$ polynomial. Let $s = \deg \hhh^*_P(z)$ and $S = d(w+1)-1$. 
We have $s \leqslant S$. The coefficients $\hhh^*_0, \hhh^*_1, \ldots, \hhh^*_{S}$ of the $\hhh^*$ polynomial satisfy the following inequalities:
\begin{align*}
    \hhh^*_0 + \cdots + \hhh^*_{i+1} &\geqslant
    \hhh^*_S + \cdots + \hhh^*_{S-i} &&\text{for }~0\leqslant i < \lfloor S/2\rfloor,\\
    \hhh^*_s + \cdots + \hhh^*_{s-i} &\geqslant
    \hhh^*_0 + \cdots + \hhh^*_{i} &&\text{for }~0 \leqslant i \leqslant s.
\end{align*}
\end{theorem}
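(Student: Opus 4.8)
The plan is to obtain both families of inequalities from the $\aaa\bbb$-decomposition of Theorem~\ref{thm: ab decomp} by elementary generating-function bookkeeping, the only structural inputs being the palindromicity and nonnegativity of $\aaa(z)$ and $\bbb(z)$. Write $\hhh^*_P(z) = \sum_{j \geqslant 0} \hhh^*_j z^j$, with the conventions $\hhh^*_j = 0$ for $j < 0$ or $j > s$, and for a polynomial $p(z) = \sum_j p_j z^j$ let $\Sigma_i(p) = p_0 + p_1 + \cdots + p_i$ (with $\Sigma_i(p) = 0$ for $i < 0$); thus $\Sigma_i(p)$ is the coefficient of $z^i$ in $p(z)/(1-z)$, and it is nondecreasing in $i$ whenever $p$ has nonnegative coefficients. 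First I would record three facts. Set $g(z) = \aaa(z) + z^\ell \bbb(z)$; clearing denominators in Theorem~\ref{thm: ab decomp} (and then multiplying by $1-z$) gives the polynomial identities
\begin{equation*}
(1 + z + \cdots + z^{\ell-1})\,\hhh^*_P(z) = (1 + z + \cdots + z^{d-1})\,g(z), \qquad (1 - z^\ell)\,\hhh^*_P(z) = (1 - z^d)\,g(z). \tag{$\dagger$}
\end{equation*}
Ehrhart--Macdonald reciprocity for rational polytopes gives $\Ehr_{P^\circ}(z) = z^{d(w+1)}\hhh^*_P(1/z)/(1-z^d)^{w+1}$, whose lowest-degree term is $z^{d(w+1)-s}$ with coefficient $\hhh^*_s \neq 0$; since by definition of $\ell$ this term must be $z^\ell$, we get $s + \ell = d(w+1)$, hence $S = s + \ell - 1 \geqslant s$. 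Finally, the decomposition of Theorem~\ref{thm: ab decomp} (see~\cite{BBVM}) is normalized so that $\deg\aaa(z) = dw$ and $\deg\bbb(z) = dw - \ell = s - d$; in particular $g$ has degree $dw$.

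Next I would extract a single identity relating $\hhh^*_P$ to $\bbb$. Apply the degree-$d(w+1)$ reversal $f(z) \mapsto z^{d(w+1)} f(1/z)$ to both sides of the second identity in $(\dagger)$; this is legitimate since each side has degree exactly $d(w+1)$. Setting $\bar{\hhh}(z) = z^{s}\hhh^*_P(1/z)$ and $\bar g(z) = z^{dw}g(1/z)$ — so $\bar{\hhh}_j = \hhh^*_{s-j}$ and $\bar g_j = g_{dw-j}$ — and using the palindromicity of $\aaa$ and $\bbb$ together with the degree normalization to identify $\bar g(z) = \aaa(z) + \bbb(z)$, the reversed identity reads $(1 - z^\ell)\bar{\hhh}(z) = (1 - z^d)(\aaa(z) + \bbb(z))$. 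Subtracting this from the second identity in $(\dagger)$ and using $g - \aaa - \bbb = (z^\ell - 1)\bbb$, the common factor $1 - z^\ell$ cancels and we obtain
\begin{equation*}
\bar{\hhh}(z) - \hhh^*_P(z) = (1 - z^d)\,\bbb(z), \qquad \text{equivalently} \qquad \hhh^*_{s-j} - \hhh^*_j = \bbb_j - \bbb_{j-d} \ \text{ for all } j. \tag{$\star$}
\end{equation*}

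Finally I would read off the two families. Summing $(\star)$ over $j = 0, 1, \ldots, i$ gives $(\hhh^*_s + \cdots + \hhh^*_{s-i}) - (\hhh^*_0 + \cdots + \hhh^*_i) = \Sigma_i(\bbb) - \Sigma_{i-d}(\bbb) \geqslant 0$ since $\bbb$ has nonnegative coefficients, which is the second family (established here for all $i \geqslant 0$). For the first family, since $\hhh^*_j = 0$ for $j > s$ and $S - s = \ell - 1$, one has $\hhh^*_S + \cdots + \hhh^*_{S-i} = \hhh^*_s + \cdots + \hhh^*_{s-(i-\ell+1)}$, which by the identity just derived equals $\Sigma_{i+1-\ell}(\hhh^*_P) + \Sigma_{i+1-\ell}(\bbb) - \Sigma_{i+1-\ell-d}(\bbb)$. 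Thus the desired inequality $\hhh^*_0 + \cdots + \hhh^*_{i+1} \geqslant \hhh^*_S + \cdots + \hhh^*_{S-i}$ is equivalent to
\[
\Sigma_{i+1}(\hhh^*_P) - \Sigma_{i+1-\ell}(\hhh^*_P) \ \geqslant\ \Sigma_{i+1-\ell}(\bbb) - \Sigma_{i+1-\ell-d}(\bbb).
\]
The left-hand side is the coefficient of $z^{i+1}$ in $(1 + z + \cdots + z^{\ell-1})\hhh^*_P(z)$, which by $(\dagger)$ equals $(1 + \cdots + z^{d-1})\aaa(z) + z^\ell(1 + \cdots + z^{d-1})\bbb(z)$; extracting that coefficient yields $\bigl(\Sigma_{i+1}(\aaa) - \Sigma_{i+1-d}(\aaa)\bigr) + \bigl(\Sigma_{i+1-\ell}(\bbb) - \Sigma_{i+1-\ell-d}(\bbb)\bigr)$. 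The second bracket is precisely the right-hand side, and the first is $\geqslant 0$ since $\aaa$ has nonnegative coefficients; this proves the first family (again for all $i \geqslant 0$, which covers the stated range). I expect the only real subtlety to be the reversal step producing $(\star)$: it is exactly the degree normalization $\deg\aaa = dw$, $\deg\bbb = dw - \ell$ and the relation $s + \ell = d(w+1)$ that make the reversal of $g$ equal $\aaa + \bbb$ and force $1 - z^\ell$ to divide both sides; everything else is routine manipulation of partial sums.
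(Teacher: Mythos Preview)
The paper does not give its own proof of this statement: Theorem~\ref{thm:ineqs} is quoted from \cite{BBVM}, and the paper only remarks that it ``is a consequence of the $\aaa\bbb$-decomposition'' before moving on. So there is no proof in the paper to compare against.

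That said, your derivation is exactly the kind of argument the paper is alluding to, and it is correct. The two essential moves --- (i) reversing the identity $(1-z^\ell)\hhh^*_P(z)=(1-z^d)g(z)$ at degree $d(w+1)$ to obtain $\bar g=\aaa+\bbb$, and (ii) subtracting to cancel $1-z^\ell$ and isolate the clean coefficient identity $(\star)$ --- are the heart of the matter, and from there both inequality families drop out by the partial-sum bookkeeping you describe. One small caution: what you actually use in the reversal step is not that $\deg\aaa=dw$ and $\deg\bbb=dw-\ell$ in the sense of nonvanishing top coefficients, but rather the palindromic-center normalizations $z^{dw}\aaa(1/z)=\aaa(z)$ and $z^{dw-\ell}\bbb(1/z)=\bbb(z)$. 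This is indeed how the decomposition in \cite{BBVM} is set up (generalizing Stapledon's $d=1$ case where the centers sit at $w/2$ and $(s-1)/2$), so your citation covers it; just be sure to state it that way, since the argument goes through even if a leading coefficient happens to vanish.
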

To our knowledge, the only generalization of the above results to $q$-Ehrhart series is \cite[Thm.~2.4]{DLVVW.2024}, which says that the coefficients (in $q$ and $z$) of the $\hhh^*$ polynomial are positive under very strict restrictions on the grading. This theorem does not seem to apply to our setting. See \cite{BajoEtAl} for a related recent direction.\medskip

The property of being a $q$-Ehrhart series is stronger than the property of Conjecture \ref{conj:Ehr}. Nevertheless, our tests reveal that 
$A_\mu(z,1)$ satisfies the restrictions imposed by Theorems \ref{thm: ab decomp} and~\ref{thm:ineqs}.
Given a partition $\mu$ of $w$, Theorem \ref{th:semi-final} implies that
\[
\hhh^*_\mu(z) := A_\mu(z,1)(1-z^d)^w
\]
is a polynomial, where $d$ is the lowest common multiple of the exponents of $z$ in $d_w(z,q)$.
If $\hhh^*_\mu(z)$ is the $\hhh^*$ polynomial of a rational polytope, then it would admit an $\aaa\bbb$-decomposition
\begin{equation}\label{eq:h*}
\frac{1+z+\cdots+z^{\ell-1}}{1+z+\cdots+z^{d-1}}
\hhh^*_\mu(z) = \aaa(z) + z^\ell \bbb(z)
\end{equation}
for palindromic polynomials with nonnegative coefficients $\aaa(z)$ and $\bbb(z)$, for some~$\ell$.

\begin{example}
Let $\mu=(3)$. Starting with the expression in Table~\ref{table.rational A} from Section~\ref{ss:GeneratingFunctions}, we obtain
\begin{align*}
A_3(z,1) 
&= \frac{z^2}{(1-z^4)(1-z)^2} + \frac{1}{(1-z^4)(1-z)} \\ 
&= \frac{z^8 + z^7 + 2z^6 + 3z^5 + 2z^4 + 3z^3 + 2z^2 + z + 1}{(1-z^4)^3}.
\end{align*}
Call this latter numerator $\hhh^*_3(z)$. We have
\[
\frac{1}{1+z+z^2+z^3}\hhh^*_3(z) = 
z^5 + z^3 + z^2 + 1.
\]
Note that the right-hand side is a palindromic polynomial. In particular, since the product of palindromic polynomials is palindromic, 
then $\hhh^*_\mu$ admits an $\aaa\bbb$-decomposition as in~\eqref{eq:h*}.
\end{example}

A computer check reveals that for all partitions $\mu$ with $|\mu| \leqslant 7$ which are either a hook or self-conjugate,
\[
\frac{1}{1+z+\cdots+z^{d-1}}\hhh^*_\mu(z)
\]
is a palindromic polynomial. In particular, this shows that they admit an $\aaa\bbb$-decomposition as in the above example.
For the remaining partitions of size $\leqslant 7$, we have checked that they satisfy the inequalities of Theorem~\ref{thm:ineqs}.

A possible reason why hooks and self-conjugate partitions might showcase a different behavior than arbitrary partitions is explored in the following section.

\subsection{Reciprocity theorems}
\label{ss:reciprocity}

Often in combinatorics, one is interested in a function or a polynomial $p(x)$ which has a combinatorial interpretation at each nonnegative integer value~$p(n)$. These include binomial coefficients, the hook-length formula, order polynomials of posets, chromatic polynomials, and many more.
Surprisingly, all of the above also have a combinatorial interpretation at each \emph{negative} integer. For instance, 
\[
\binom{n}{k} = \frac{n(n-1)\cdots(n-k+1)}{k(k-1)\cdots 1} 
\]
counts the number of $k$-subsets of $\{1,\ldots,n\}$, and
\[
\binom{-n}{k} = (-1)^k\frac{n(n+1)\cdots(n+k-1)}{k(k-1)\cdots 1} = (-1)^k\binom{n+k-1}{k} 
\]
counts the number of $k$-multisubsets of $\{1,\ldots,n\}$.
This is a \emph{combinatorial reciprocity theorem}. The term was introduced by Stanley in~\cite{Stanley1975reciprocity} and explored in depth by 
Beck and Sanyal in \cite{CRT}, where the authors show all above combinatorial reciprocity theorems (and more) as a consequence of Ehrhart theoretic equalities. The most basic of these equalities is as follows.
\begin{thm}[Ehrhart reciprocity]\label{thm:Ehrhart reciprocity}
    Let $P$ be a closed lattice polytope of dimension $w$. Then
    \[
    \Ehr_P(z^{-1}) = (-1)^{w+1}\Ehr_{P^\circ}(z),
    \]
    where $P^\circ$ is the interior of $P$.
\end{thm}
Chapoton \cite{Chapoton} gave a refinement for $q$-Ehrhart polynomials, which is then generalized in \cite{BK-Chapoton, DLVVW.2024}. We need a slight generalization to half-open polytopes. We state it with our notation (in particular, note that our weighting function is $\wt(v) = 2(v_1 + \cdots + v_w)$ and that $\wt(kv) = k\wt(v)$, and so it falls within~\cite[Def.~1.1]{DLVVW.2024}).

\begin{thm}[\cite{DLVVW.2024}, Thm. 2.9]
\label{thm:QEhr recip}
    Let $\bar P \subseteq \square$ be a closed rational convex polytope with faces $F_1, \ldots, F_k$. Let $P = \bar P \setminus\bigcup_1^j F_i$ and $P^\circ = \bar P \setminus\bigcup_{j+1}^k F_i$ be two complementary half-open polytopes. Then
    \[
    \QEhr_P(z^{-1},q^{-1}) = (-1)^{w+1}\QEhr_{P^\circ}(z,q).
    \]
\end{thm}

As a corollary, we obtain a reciprocity theorem for the quantum Ehrhart series indexed by the partition $\mu$. We provide a self-contained combinatorial proof.
\begin{prop}\label{prop:QEhr_reciprocity}
For all partitions $\mu \vdash w\in \mathbb{Z}_{>0}$, we have
    $$\QEhr_\mu(z^{-1},q^{-1}) = (-1)^{w+1} z^2 \QEhr_{\mu'}(z,q)~.$$
\end{prop}
\begin{proof}
The denominator for $\QEhr_\mu(z,q)$ from
Theorem \ref{th:grsw_gf}, with $q$ replaced by $q^{-1}$
and $z$ replaced by $z^{-1}$, is
\[
\prod_{i=0}^w(1-q^{-w+2i}/z)=
\prod_{i=0}^w(q^{w-2i}-1/z)=
(-z)^{-w-1}\prod_{i=0}^w(1-q^{w-2i}z)~.
\]
Recall from Section \ref{section.notation}, that we have $\des(T') = w-1-\des(T)$
and $\maj(T') = \binom{w}{2} - \maj(T)$.
Therefore, we compute directly
\begin{align*}
\QEhr_\mu(z^{-1},q^{-1})
&=
\frac{\sum_{T\in\SYT(\mu)} q^{2\maj(T)} (zq^w)^{-\des(T)}}{\prod_{i=0}^w(1-q^{-w+2i}/z)}\\
&=
(-z)^{w+1}\frac{\sum_{T\in\SYT(\mu)} q^{w(w-1) - 2\maj(T')} (zq^w)^{-(w-1)+\des(T')}}{\prod_{i=0}^w(1-q^{w-2i}z)}\\
&=
(-1)^{w+1}z^2\frac{\sum_{T\in\SYT(\mu)} q^{-2\maj(T')} (zq^w)^{\des(T')}}{\prod_{i=0}^w(1-q^{w-2i}z)}\\
&= (-1)^{w+1}z^2\QEhr_{\mu'}(z,q)~. \qedhere
\end{align*}
\end{proof}

We should remark that there is another relation
\[
\QEhr_\mu(z,q^{-1})
=\sum_{h \geqslant 0} s_\mu[ s_h](q^{-1},q) z^h
=\QEhr_\mu(z,q)~.
\]
This can be seen combinatorially with the Sch\"uztenberger involution
$\phi : \SYT(\mu) \rightarrow \SYT(\mu)$ since it is possible to show that
$\des(\phi(T)) = \des(T)$ and $|\phi(T)|\des(\phi(T))-2\maj(\phi(T)) = - |T|\des(T)+2\maj(T)$.

We now establish a combinatorial reciprocity theorem for $A_\mu(z,q)$,
as further evidence towards Conjecture~\ref{conjecture.polytope}.

\begin{thm}[Combinatorial reciprocity for $A_\mu$]
\label{thm:reciprocity}
For all partitions $\mu \vdash w\in \mathbb{Z}_{>0}$, we have
    $$A_\mu(z^{-1},q^{-1}) = (-1)^{w}z^{2} A_{\mu'}(z,q).$$
\end{thm}

The proof of Theorem~\ref{thm:reciprocity} 
was provided to us by Guoce Xin
and can be found after the following lemma.

\begin{lemma}
\label{lemma.partial fraction}
The partial fraction decomposition of $(q-q^{-1})\QEhr_\mu(z,q)$ has the following form:
\[
(q-q^{-1})\QEhr_\mu(z,q) = 
\sum_{j \geqslant 0} 
\left(\frac{g_j(z)}{1-q c_j z^{f_j}}
- \frac{c_j z^{f_j} g_j(z)}{q - c_j z^{f_j}}\right)
\]
with $g_j(z) \in \mathbb{C}(\!(z)\!)$, $c_j \in \mathbb{C}$,
$f_j \in \mathbb{Q}_{>0}$ with
$\sum_{j} g_j(z) = 0$.
\end{lemma}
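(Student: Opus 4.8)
The plan is to start from the explicit rational expression for $\QEhr_\mu(z,q)$ furnished by Theorem~\ref{th:grsw_gf},
\[
\QEhr_\mu(z,q) = \frac{\sum_{T\in\SYT(\mu)} q^{-2\maj(T)} (zq^w)^{\des(T)}}{\prod_{i=0}^w(1-q^{w-2i}z)},
\]
multiply by $(q-q^{-1})$, and regard the result as a rational function of $q$ with coefficients in the field $\mathbb{C}(\!(z)\!)$ of formal Laurent series in $z$. The denominator $\prod_{i=0}^w (1-q^{w-2i}z)$ has, over $\mathbb{C}(\!(z)\!)$, simple roots at $q = z^{-1}q_{0}$ where $q_0$ ranges over the $w$th roots of unity times appropriate signs — more precisely, setting $q^{w-2i}z = 1$ gives $q$ a root of $q^{w-2i} = z^{-1}$. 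The first step is therefore to carry out the ordinary partial fraction decomposition of $(q-q^{-1})\QEhr_\mu(z,q)$ in the variable $q$ over $\mathbb{C}(\!(z)\!)$. Each simple pole at $q = \zeta z^{-1/(w-2i)}$ (suitably interpreted) contributes a single term; grouping the pole inside the unit disk in $q$ (giving a factor $\frac{1}{1-qcz^f}$ after normalizing) with the reciprocal pole outside (giving $-\frac{cz^fg(z)}{q-cz^f}$) produces pairs of exactly the shape claimed, with $f_j\in\mathbb{Q}_{>0}$ the reciprocal of the relevant exponent $w-2i$ and $c_j\in\mathbb{C}$ a root of unity.

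The second and more delicate step is the pairing and the vanishing condition $\sum_j g_j(z)=0$. The symmetry that forces this is the functional equation already recorded in Proposition~\ref{prop:QEhr_reciprocity} (and its companion $\QEhr_\mu(z,q^{-1})=\QEhr_\mu(z,q)$): replacing $q$ by $q^{-1}$ fixes $\QEhr_\mu(z,q)$ but flips the sign of $(q-q^{-1})$, so $(q-q^{-1})\QEhr_\mu(z,q)$ is \emph{odd} under $q\mapsto q^{-1}$. This antisymmetry is precisely what matches the pole at $q=cz^f$ with the pole at $q=c^{-1}z^{-f}=1/(cz^f)$ (using $|c|=1$) and makes the two residues opposite, yielding the bracketed form $\frac{g_j(z)}{1-qc_jz^{f_j}}-\frac{c_jz^{f_j}g_j(z)}{q-c_jz^{f_j}}$ with a \emph{single} series $g_j(z)$ governing both. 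For the sum rule, observe that $(q-q^{-1})\QEhr_\mu(z,q)$ has a finite limit (indeed vanishes) as $q\to\infty$, since the numerator has $q$-degree at most $w\cdot\max_T\des(T)\le w(w-1)$ while... one must instead argue directly: evaluate the partial fraction identity in the limit $q\to\infty$ in $\mathbb{C}(\!(z)\!)$. Each term $\frac{g_j(z)}{1-qc_jz^{f_j}}\to 0$ and $-\frac{c_jz^{f_j}g_j(z)}{q-c_jz^{f_j}}\to 0$ individually as $q\to\infty$, so this gives no information; the constraint instead comes from the behavior as $q\to\infty$ where $(q-q^{-1})\QEhr_\mu(z,q)$ grows like $q$ times the leading coefficient. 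Matching the $O(q)$ terms: $-\frac{c_jz^{f_j}g_j(z)}{q-c_jz^{f_j}}$ contributes nothing of order $q$, but the combined bracket expanded for large $q$ gives $\frac{g_j(z)}{1-qc_jz^{f_j}} = -\frac{g_j(z)}{qc_jz^{f_j}}(1+\cdots)$ which is $O(1/q)$; hence the only way to reproduce the genuine $O(q)$ growth is to track it through a single explicit term. Thus the cleanest route is: compute the polynomial part (the part that is a Laurent polynomial in $q$), show it equals $q\cdot\sum_j g_j(z) - q^{-1}\cdot(\text{same})$ after resumming, and then use the oddness of $(q-q^{-1})\QEhr_\mu$ together with the explicit top-degree coefficient to pin down $\sum_j g_j(z)$. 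Here the top-degree-in-$q$ coefficient of the numerator of $(q-q^{-1})\QEhr_\mu(z,q)$ is a rational function of $z$; dividing by the top-degree coefficient $(-z)^{w+1}q^{w(w+1)/2+\cdots}$ ... in fact the polynomial part is degree $1$ in $q$ (numerator degree exceeds denominator degree by exactly $1$ in the relevant regime when $\mu$ is a one-row shape, and is $\le 0$ otherwise), and one checks its leading coefficient vanishes by the antisymmetry argument, forcing $\sum_j g_j(z)=0$.

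The main obstacle will be the bookkeeping of the partial fraction expansion when the exponents $w-2i$ are not coprime — several of the factors $1-q^{w-2i}z$ share roots over $\mathbb{C}(\!(z)\!)$ only in the sense that their root sets (as algebraic functions of $z$) can coincide after a change of fractional power, so one must be careful to collect all contributions with the same $f_j$ and $c_j$ into one $g_j(z)$, and to verify no higher-order poles appear (they do not, because the denominator $\prod_{i=0}^w(1-q^{w-2i}z)$ is square-free in $q$ over $\mathbb{C}(\!(z)\!)$: distinct $i$ give distinct exponents, hence distinct one-parameter families of roots). Granting square-freeness, the decomposition is automatic, the pairing follows from $q\leftrightarrow q^{-1}$ antisymmetry, and the sum rule $\sum_j g_j(z)=0$ follows from the absence of a constant (degree-zero in $q$) term in the principal part at infinity — equivalently, from $\lim_{q\to 0}\big[(q-q^{-1})\QEhr_\mu(z,q)\cdot q\big]$ being visibly a single well-defined Laurent series that must coincide with $-\sum_j g_j(z)$ and, by oddness compared with the $q\to\infty$ computation, with $+\sum_j g_j(z)$, hence $0$.
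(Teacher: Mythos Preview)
Your plan shares the paper's skeleton: start from Theorem~\ref{th:grsw_gf}, take partial fractions in $q$, and pair the poles via the antisymmetry $(q-q^{-1})\QEhr_\mu(z,q^{-1})=-(q-q^{-1})\QEhr_\mu(z,q)$. The pairing step is fine and matches the paper.

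The genuine gap is your argument for $\sum_j g_j(z)=0$. You begin the right computation (``the numerator has $q$-degree at most $w\cdot\max_T\des(T)$\ldots'') but abandon it and then try several limit arguments, none of which work as written:
\begin{itemize}
\item Taking $q\to\infty$ on each bracket $\frac{g_j}{1-qc_jz^{f_j}}-\frac{c_jz^{f_j}g_j}{q-c_jz^{f_j}}$ gives $0$, as you observe, so comparing with the left side yields nothing.
\item Your fallback $\lim_{q\to 0}\big[(q-q^{-1})\QEhr_\mu(z,q)\cdot q\big]$ does not isolate $-\sum_j g_j$: multiplying each bracket by $q$ and sending $q\to 0$ again gives $0$.
\item The oddness comparison between $q\to 0$ and $q\to\infty$ only gives the tautology $\sum_j g_j=-(-\sum_j g_j)$; it cannot force the sum to vanish without an independent input.
\item Your assertion that ``the polynomial part is degree $1$ in $q$ when $\mu$ is a one-row shape'' is incorrect.
\end{itemize}

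The missing input is exactly the degree count you dropped. The paper bounds the $q$-degree of the numerator $L_\mu(z,q)=(q-q^{-1})(1-z)^{-\delta}\sum_T q^{-2\maj(T)}(zq^w)^{\des(T)}$ by $1+\sum_{i<w/2}(w-2i)$ and observes that the $q$-degree of the paired denominator $\prod_j(1-c_jz^{f_j}q)(1-c_jz^{f_j}/q)$ exceeds this by $w-1$, so $(q-q^{-1})\QEhr_\mu(z,q)$ is \emph{proper} in $q$ (for $w>1$). Heaviside then gives a decomposition with no polynomial part; antisymmetry forces $g'_j=-g_j$; rewriting $\frac{-g_j}{1-c_jz^{f_j}/q}=-g_j-\frac{c_jz^{f_j}g_j}{q-c_jz^{f_j}}$ introduces a constant $-g_j$ from each pair; and properness then forces those constants to sum to zero. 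That is the step your proposal never lands on: properness, established by the explicit degree bound, is what kills the constant part.
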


\begin{proof}
The formula for $(q-q^{-1})\QEhr_\mu(z,q)$ is given in Theorem \ref{th:grsw_gf}.  We note that
we can factor the denominator $\prod_{i=0}^w (1-q^{w-2i}z)$
of this expression into pairs of linear factors of the form $(1 - c_j z^{f_j} q)(1 - c_j z^{f_j}/q)$
(except for potentially a single unpaired factor of $1-z$ if $w$ is even; this factor will be considered to
be part of the numerator $g_j(z)$).  The exponents $f_j$ in these expressions are all of the form ${\frac{1}{w-2k}}$
for $0 \leqslant k < \frac{w}{2}$. Similarly $c_j \in \mathbb{C}$ is a power of $e^{\frac{2\pi i}{2w-k}}$.

That is, we have
\[
(q-q^{-1})\QEhr_\mu(z,q) = \frac{L_\mu(z,q)}{\prod_j (1 - c_j z^{f_j} q)(1 - c_j z^{f_j}/q)}~,
\]
where 
\begin{equation}\label{eq:Lsum}
L_\mu(z,q) = (q-q^{-1}) (1-z)^{-\delta} \sum_{T\in\SYT(\mu)} q^{-2\maj(T)} (zq^w)^{\des(T)}
\end{equation}
and $\delta$ is equal to $1$ if $w$ is even and $\delta=0$ otherwise.
It is important to note here that $L_\mu(z,q^{-1}) = - L_\mu(z,q)$.  
Hence the degree in $q$ and the degree in $q^{-1}$ of $L_\mu(z,q)$ is the same.

The degree of $q$ in $L_\mu(z,q)$ in Equation \eqref{eq:Lsum} is the maximum value of
$1+\sum_{i\in\Des(T)} w - 2i$ as $T$ ranges over $\SYT(\mu)$.
Since $w-2i$ is positive for $i < w/2$ and negative otherwise, this sum is bounded above by
\[
1+\sum_{i=1}^{\lfloor w/2\rfloor} w - 2i =
1 + (w-2) + (w-4) + \cdots + \delta(w~\text{odd}).
\]
The degree of the denominator is
$w-1$ greater hence the rational function $(q-q^{-1})\QEhr_\mu(z,q)$
is proper in $q$ (that is, the degree in the numerator is strictly smaller
than the degree in the denominator for $w>1$).

Since the expression is proper in $q$, Heavyside's cover-up algorithm allows us to compute that
\begin{equation}
\label{eq:parfrac reciproc}
(q-q^{-1})\QEhr_\mu(z,q) = \sum_{j} \left(\frac{g_j(z)}{1 - c_j z^{f_j} q} + \frac{g'_j(z)}{1 - c_j z^{f_j}/q}\right)~,
\end{equation}
where we have that
\[
g_j(z) = \frac{L_\mu(z,\frac{1}{z^{f_j}c_j})}{(1-c_j^2 z^{2f_j})\prod_{j'\neq j} (1 - \frac{c_{j'}}{c_j} z^{f_{j'}-f_j})
(1 - c_{j'} c_j z^{f_{j'}+f_j})} = - g'_j(z).
\]

However, we have that the partial fraction decomposition is equal to
\begin{equation}
\label{equation.pt}
(q-q^{-1})\QEhr_\mu(z,q) = \sum_{j} \left(\frac{g_j(z)}{1 - c_j z^{f_j} q} - g_j(z) - \frac{c_j z^{f_j} g_j(z)}{q - c_j z^{f_j}}\right).
\end{equation}
Now since $(q-q^{-1})\QEhr_\mu(z,q)$ is proper we conclude that $- \sum_j g_j(z) = 0$.
\end{proof}

\begin{proof}[Proof of Theorem \ref{thm:reciprocity}]
We compute the $\PT[q]$ operator on a term in the expansion of
Lemma~\ref{lemma.partial fraction} using~\eqref{eq:parfrac reciproc} as
\begin{equation}\label{eq:Amu_term}
\PT[q] \left(\frac{g_j(z)}{1-q c_j z^{f_j}}
- \frac{g_j(z)}{1-c_j z^{f_j}/q}\right)
= \frac{g_j(z)}{1-q c_j z^{f_j}}
- g_j(z).
\end{equation}
We also compute $\PT[q]$ on the same expression
with $q$ replaced by $q^{-1}$ and $z$ replaced by $z^{-1}$ and obtain
\begin{align}\label{eq:QEhr_term}
\PT[q] &\left(\frac{g_j(1/z)}{1-c_j z^{-f_j}/q}
- \frac{g_j(1/z)}{1-q c_j z^{-f_j}}\right)
= \PT[q] \left(\frac{q z^{f_j} g_j(1/z)}{q z^{f_j} -c_j}
- \frac{z^{f_j} g_j(1/z)/q}{z^{f_j}/q- c_j}\right)\nonumber\\
&= \frac{q z^{f_j} g_j(1/z)}{q z^{f_j}-c_j}=\frac{g_j(1/z)}{1-c_j z^{-f_j}/q}~.
\end{align}
Next we compute $A_\mu(z^{-1},q^{-1})-\PT[q]( (q^{-1}-q)\QEhr_\mu(z^{-1},q^{-1}) )$.  To this end,
we replace $q$ by $q^{-1}$ and $z$ by $z^{-1}$ in Equation \eqref{eq:Amu_term} and subtract Equation \eqref{eq:QEhr_term}.
The result is $-g_j(1/z)$.

We therefore have
\[
A_\mu(z^{-1},q^{-1})-\PT[q]( (q^{-1}-q)\QEhr_\mu(z^{-1},q^{-1}) )
= -\sum_j g_j(1/z).
\]
This is equal to $0$ by Lemma \ref{lemma.partial fraction} and hence
by applying Proposition \ref{prop:QEhr_reciprocity}, we find
\begin{align*}
A_\mu(z^{-1},q^{-1}) &= \PT[q]( (q^{-1}-q)\QEhr_\mu(z^{-1},q^{-1}) )\\
&= -\PT[q]( (q-q^{-1})(-1)^{w+1} z^2 \QEhr_{\mu'}(z,q) )\\
&= (-1)^{w} z^2 A_{\mu'}(z,q)~.\qedhere
\end{align*}
\end{proof}

When $\mu$ is self-conjugate, the previous formula is a ``self-reciprocity'' statement of $A_\mu$. This phenomenon also arises for a few other families of partitions. We show this after a preparatory lemma.
\begin{lemma}\label{lem:hooks}
    Let $\mu^{0}$ be a self-conjugate partition, and let $n$ be the size of the Durfee square of $\mu^0$. Let $\mu = \mu^0 + (m^n)$ for some $m$. Then,
    \[
    s_{\mu}[s_h](q^{-1},q) = s_{\mu'}[s_{h + m}] (q^{-1},q).
    \]
\end{lemma}
\begin{proof}
    The result will follow from Stanley's hook-content formula \cite[Thm.~15.3]{Stanley_plane_partitions_2}
    \[
    s_\mu[s_h](q^{-1},q) = \prod_{\square\in \mu} \frac{[h+\mathsf{ct}(\square)]_q}{[\mathsf{hl}_\mu(\square)]_q},
    \]
    where $\mathsf{ct}(i,j) = j-i$ is the content and $\mathsf{hl}_\mu(i,j)$ is the hook length of the cell $(i,j)$ in the Young diagram of $\mu$. Note that $\prod_{\square\in\mu} [\mathsf{hl}_\mu(\square)]_q = \prod_{\square\in\mu'}[\mathsf{hl}_{\mu'}(\square)]_q$ for all $\mu$.
    The claim can therefore be proven by showing that for any partition $\mu$ constructed as in the statement we have
    \begin{equation}\label{eq:aux SHCF}
    \prod_{\square\in \mu} [h+\mathsf{ct}(\square)]_q
    =
    \prod_{\square\in \mu'} [h+m+\mathsf{ct}(\square)]_q.
    \end{equation}
    
    We expand the left-hand side as a product of $2n$ falling $q$-factorials, two for each cell in the main diagonal of~$\mu$. The cell $(i,i)$ corresponds on the one hand to the product of contents of all cells (weakly) to its right, giving rise to
    \[
    [h]_q\cdot[h+1]_q\cdot[h+2]_q\cdots[h+\mu_i-i]_q = \frac{[h+\mu_i-i]_q!}{[h-1]_q!},
    \]
    and on the other hand to the product of contents of all cells (strictly) below itself, giving rise to
    \[
    [h-1]_q\cdot[h-2]_q\cdot[h-3]_q\cdots[h-\mu'_i+i]_q = \frac{[h-1]_q!}{[h-\mu'_i+i-1]_q!}.
    \]
    Noting that $\mu_i = m + \mu^0_i$ and $\mu'_i = \mu^0_i$ for all $i$, we get that the left hand side of~\eqref{eq:aux SHCF} is
    \[
    \prod_{i=1}^n \frac{[h+\mu_i-i]_q!}{[h-\mu'_i+i-1]_q!} = 
    \prod_{i=1}^n \frac{[h+m+\mu^0_i-i]_q!}{[h-\mu^0_i+i-1]_q!}.
    \]
    The right-hand side of~\eqref{eq:aux SHCF} admits a similar expansion,
    \[
    \prod_{i=1}^n \frac{[h+m+\mu'_i-i]_q!}{[h+m-\mu_i+i-1]_q!} = 
    \prod_{i=1}^n \frac{[h+m+(\mu^0_i)-i]_q!}{[h+m-(\mu^0_i+m)+i-1]_q!},
    \]
    which is indeed equal to the expression for the left-hand side.
\end{proof}

\begin{remark}
    Any partition $\mu$ in one of the following families can either be constructed by following the procedure of Lemma~\ref{lem:hooks} 
    (for some choice of $\mu^0$ and $m$), or is the transpose of such a partition:
    \begin{enumerate}
        \item self-conjugate partitions,
        \item hook partitions,
        \item rectangular partitions, and
        \item partitions $\mu$ of empty $(|\mu|/2)$-core.
    \end{enumerate}
    This is a non-exhaustive list.
\end{remark}

\begin{cor}[Combinatorial self-reciprocity]
\label{cor:hooks}
If $\mu$ is a partition constructed as in Lemma~\ref{lem:hooks} or its transpose, then
    $$A_\mu(z^{-1},q^{-1}) = (-1)^{|\mu|}A_{\mu}(z,q)z^{\mu_1-\mu_1'+2}.$$
\end{cor}
\begin{proof}
Let $|\mu|=w$. We show the claim for $\mu$ of the form $\mu^0 + (m^n)$. Noting that $\mu_1 - \mu_1' = m$, we deduce that the claim also holds for $\mu'$. 
By the Lemma~\ref{lem:hooks}, for each $h$ we have
\[
s_{\mu}[s_h](q^{-1},q) = s_{\mu'}[s_{h + \mu_1 - \mu_1'}] (q^{-1},q),
\]
from which we obtain $a_{\mu[h]}^{[k]} = a_{\mu'[h + \mu_1 - \mu_1']}^{[k]}$ for all $h$.
We then have
\begin{align*}
    A_{\mu}(z^{-1},q^{-1}) 
    &\stackrel{\ref{thm:reciprocity}}{=} (-1)^w A_{\mu'}(z,q)z^2\\
    &= (-1)^w \sum_{h,k\geqslant0} a_{\mu'[h-2]}^{[k]} q^k z^h\\
    &= (-1)^w \sum_{h,k\geqslant 0} a_{\mu[h+\mu_1 - \mu_1'-2]}^{[k]}\ q^k z^h\\
    &= (-1)^w A_\mu(z,q) z^{\mu_1 - \mu_1'+2}. \qedhere
\end{align*}
\end{proof}

Computational evidence suggests that no other partitions satisfy this combinatorial self-reciprocity.

\section{Generating function of \texorpdfstring{$\GL_n$}{GLn}-plethysm coefficients}
\label{section.GLn}

In this section, we extend our approach to the multivariate generating function of 
$\GL_n$-plethysm coefficients, and define for a partition $\mu$,
\begin{equation}
\mathbb{A}_\mu(x_1,\ldots,x_n;y_1,\ldots,y_m)
= \sum_{\ell(\lambda)\leqslant n} \sum_{\ell(\nu)\leqslant m} \aaa_{\mu[\nu]}^\lambda x^\lambda y^\nu,
\end{equation}
and for partitions $\mu$ such that $\ell(\mu) \leqslant n$,
\begin{equation}
\mathbb{B}_\mu(x_1,\ldots,x_n;y_1,\ldots,y_m)
= \sum_{\ell(\lambda)\leqslant n} \sum_{\ell(\nu)\leqslant m} \aaa_{\nu[\mu]}^\lambda x^\lambda y^\nu,
\end{equation}
where $x^\lambda = x_1^{\lambda_1}x_2^{\lambda_2}\cdots x_n^{\lambda_{n}}$ and $y^\nu$ is similarly defined.

\begin{note}
The generating function $\mathbb{A}_\mu(x_1,\ldots,x_n;y_1\ldots,y_m)$ is similar to our $\SL_2$
plethysm generating function~\eqref{equation.Amu}.  These functions are related by the specialization
\[
A_\mu(z,q) = q \mathbb{A}_\mu(q,q^{-1};z)~.
\]
\end{note}

To compute this generalization of $A_\mu(z,q)$, we define an operator
on series in terms MacMahon's $\Omega_{\geqslant}$ operator
(discussed in Section \ref{ss:MacMahon}).
For a multivariate generating function $F(x_1, x_2, \ldots, x_n)$,
let $\alpha_1, \alpha_2, \ldots, \alpha_n$ be
a set of indeterminates and define
\[
\PT[x_1\geqslant x_2\geqslant \cdots\geqslant x_n] F(x_1, x_2, \ldots, x_n) = \Omega^{\alpha_1,\alpha_2, \ldots, \alpha_n}_\geqslant
F\left(x_1\alpha_1, \frac{x_2\alpha_2}{\alpha_1}, \ldots, \frac{x_n\alpha_n}{\alpha_{n-1}}\right).
\]
Notice that the effect of this operator when it acts on a single monomial
$x_1^{a_1} x_2^{a_2} \cdots x_n^{a_n}$ is that the result is $0$ unless
\[
a_1 \geqslant a_2 \geqslant \cdots \geqslant a_n \geqslant 0
\]
and it acts as the identity otherwise.  A useful property is the result by Xin \cite[Theorem 3.2]{Xin.2004}
that these operators preserve Elliot rationality when they act on
Elliot rational generating functions.

Another formula that we heavily rely on is the Cauchy identity (see \cite[Section I.4, equation (4.3)]{Macdonald}),
\[
\prod_{i = 1}^n \prod_{j=1}^m \frac{1}{1-x_i y_j} = \sum_{\lambda} s_\lambda[X_n]s_\lambda[Y_m]~.
\]

Now to construct our generating functions we introduce an operator $\mathcal{L}_{X_n}$
whose action on a symmetric function is
\[
\mathcal{L}_{X_n}(f[X_n]) = x^{-\rho^{(n)}} \PT[x_1\geqslant x_2\geqslant \cdots\geqslant x_n] (f[X_n] \Delta(X_n)),
\]
where $\rho^{(n)} = (n-1, n-2, \ldots, 1,0)$ and $\Delta(X_n) = \prod_{1 \leqslant i<j \leqslant n} (x_i - x_j)$.
This operator has the action on Schur functions indexed by a partition $\lambda$,
\[
\mathcal{L}_{X_n}( s_\lambda[X_n] ) = x^{-\rho^{(n)}} \PT[x_1\geqslant x_2\geqslant \cdots\geqslant x_n] \sum_{w \in S_n} (-1)^w
w( x^{\lambda + \rho^{(n)}}) = x^\lambda~.
\]
It also preserves Elliot rationality when it acts on generating functions of symmetric functions
because it is a composition of multiplication by the polynomial $\Delta(X_n)$, followed by an application of
$\PT[x_1\geqslant x_2\geqslant \cdots\geqslant x_n]$.

Define an application of a power sum $p_k$ plethysm applied
to the Cauchy identity to be the generating function
\begin{equation}
\label{equation:Pk expression}
\tilde{P}_{k}(x_1,\ldots,x_n;y_1,\ldots,y_m) =
\mathcal{L}_{Y_m} \Big( \prod_{i=1}^n \prod_{j=1}^m \frac{1}{1-x_i^k y_i} \Big)
= \sum_{\nu} s_\nu[p_k[X_n]] y^\nu = \sum_{\nu} p_k[s_\nu[X_n]] y^\nu~.
\end{equation}

We may compute an analogue indexed by partitions by setting
\[
\tilde{P}_{(k)}(x_1,\ldots,x_n;y_1,\ldots,y_m)
= \tilde{P}_{k}(x_1,\ldots,x_n;y_1,\ldots,y_m)
\]
and for partitions $\gamma$ of length greater than or equal to $2$, set
\begin{equation}\label{equation:P lambda expression}
\tilde{P}_{\gamma}(x_1,\ldots,x_n;y_1,\ldots,y_m) =
\CT[\alpha_1, \alpha_2, \ldots, \alpha_m] \Big(
\tilde{P}_{\gamma_1}(x_1,\ldots,x_n;\frac{y_1}{\alpha_1},\ldots,\frac{y_m}{\alpha_m})
\tilde{P}_{\overline{\gamma}}(x_1,\ldots,x_n;\alpha_1,\ldots,\alpha_m) \Big),
\end{equation}
where  $\alpha_1, \alpha_2, \ldots, \alpha_m$ is an extra set of variables
and $\overline{\gamma} = (\gamma_2, \gamma_3, \ldots, \gamma_{\ell(\gamma)})$.
We conclude by induction that for a partition $\gamma$
\begin{equation}\label{equation:P gamma expression}
\tilde{P}_{\gamma}(x_1,\ldots,x_n;y_1,\ldots,y_m) =
\sum_{\nu} p_\gamma[s_\nu[X_n]] y^\nu~.
\end{equation}

We are now ready to state our formula for
$\mathbb{A}_\mu(x_1,\ldots,x_n;y_1,\ldots,y_m)$.

\begin{thm}\label{thm:GLn one}
    For a partition $\mu$, the generating function
\[
    \mathbb{A}_\mu(x_1,\ldots,x_n;y_1,\ldots,y_m) =
    \mathcal{L}_{X_n}\Big(
    \sum_{\gamma} \frac{\chi^\mu(\gamma)}{z_\gamma}
    \tilde{P}_{\gamma}(x_1,\ldots,x_n;y_1,\ldots,y_m)
    \Big)~.
\]
\end{thm}

\begin{proof}
By Equation \eqref{equation:P gamma expression} the $\tilde{P}_{\gamma}(X_n;Y_m)$
are the generating functions for the plethysm $p_\gamma[s_\nu[X_n]]$.
Using the expansion of the Schur function in the power sum basis (see~\cite[Corollary 7.17.5]{EC2}),
$s_\mu[s_\nu[X_n]] = \sum_{\gamma} \frac{\chi^\mu(\gamma)}{z_\gamma} p_\gamma[s_\nu[X_n]]$,
we have that
\[
\sum_{\gamma} \frac{\chi^\mu(\gamma)}{z_\gamma}
    \tilde{P}_{\gamma}(x_1,\ldots,x_n;y_1,\ldots,y_m) = \sum_\nu s_\mu[s_\nu[X_n]] y^\nu~.
\]
It follows that
\begin{align*}
\mathcal{L}_{X_n}\Big(
    \sum_{\gamma} \frac{\chi^\mu(\gamma)}{z_\gamma}
    \tilde{P}_{\gamma}(x_1,\ldots,x_n;y_1,\ldots,y_m)
    \Big)
&= \sum_\nu \mathcal{L}_{X_n}(s_\mu[s_\nu[X_n]]) y^\nu\\
&= \sum_\nu \sum_{\lambda} \aaa^\lambda_{\mu[\nu]} \mathcal{L}_{X_n}(s_\lambda[X_n]) y^\nu\\
&= \sum_\nu \sum_{\lambda} \aaa^\lambda_{\mu[\nu]} x^\lambda y^\nu\\
&= \mathbb{A}_\mu(X_n;Y_m)~. \qedhere
\end{align*}
\end{proof}

\begin{example}
At least for small values of $n$ these generating functions
have some identifiable structure. We compute
\[
\mathcal{L}_{X_3}({\tilde P}_3(x_1,x_2,x_3;z)) =
\frac{1 - x_1^2 x_2 z}{(1-x_1^3 z)(1-x_1^3x_2^3 z^2)(1-x_1 x_2 x_3 z)},
\]
\[
\mathcal{L}_{X_3}({\tilde P}_{21}(x_1,x_2,x_3;z)) =
\frac{1+ x_1^6 x_2^3z^3}{(1-x_1^3 z)(1-x_1^4 x_2^2 z^2)(1+x_1^3x_2^3 z^2)(1+x_1 x_2 x_3z)},
\]
\[
\mathcal{L}_{X_3}({\tilde P}_{111}(x_1,x_2,x_3;z)) =
\frac{1- x_1^6 x_2^3z^3}{(1-x_1^3 z)(1-x_1^2 x_2 z)^2(1-x_1^3x_2^3 z^2)(1-x_1 x_2 x_3z)}~.
\]
The expressions for $\mathbb{A}_\lambda(x_1, x_2, x_3; z)$, with $|\lambda| = 3$ are
\[
\mathbb{A}_{(3)}(x_1,x_2,x_3;z) =
\frac{1+x_1^4 x_2^4 x_3 z^3}{p(x_1,x_2,x_3)}\left(1 + \frac{x_1^4 x_2^2 z^2}{1-x_1^2 x_2 z}\right),
\]
\[
\mathbb{A}_{(21)}(x_1,x_2,x_3;z) =
\frac{x_1^2 x_2 z}{(1-x_1^3 z)(1-x_1^2 x_2 z)(1-x_1 x_2 x_3 z)(1-x_1^3 x_2^3 z^2)},
\]
\[
\mathbb{A}_{(111)}(x_1,x_2,x_3;z) =
\frac{x_1x_2x_3z + x_1^3x_2^3z^2}{p(x_1,x_2,x_3)}\left(1 + \frac{x_1^4 x_2^2 z^2}{1-x_1^2 x_2 z}\right),
\]
where $p(x_1,x_2,x_3) = (1-x_1^3 z)(1-x_1^2 x_2^2 x_3^2 z^2)(1-x_1^6 x_2^6 z^4)$.
\end{example}

Our other generating function can be obtained without relying on the expressions $\tilde{P}_\gamma(X_n;Y_m)$.

\begin{thm}\label{thm:GLn two}
Let $\SSYT_n(\mu)$ denote the set of semi-standard Young tableaux of shape $\mu$
with entries in $\{1,2,\ldots, n\}$ and $x^T$ the weight of a $T \in\SSYT_n(\mu)$.
    The generating function
\[
    \mathbb{B}_\mu(x_1,\ldots,x_n;y_1,\ldots,y_m) = \mathcal{L}_{X_n} \mathcal{L}_{Y_m}\Big( \prod_{i=1}^m \prod_{T\in\SSYT_n(\mu)} \frac{1}{1-y_i x^T} \Big)~.
\]
\end{thm}

\begin{proof}
    Denote the monomial expansion
    of the Schur function indexed by the partition $\mu$ by the
    sum over semi-standard Young tableaux
    \[
    s_\mu[X_n] = \sum_{T\in\SSYT_n(\mu)} x^T
    \]
    of shape $\mu$.  The Cauchy identity is the rational function
    \[
    \prod_{i=1}^m \prod_{T\in\SSYT_n(\mu)} \frac{1}{1-y_i x^T} =
    \sum_{\nu} s_\nu[Y_m] s_\nu[s_\mu[X_n]],
    \]
    where the sum on the right hand side is over all partitions $\nu$ such that $\ell(\nu)\leqslant m$.
    We obtain $\mathbb{B}_\mu(x_1,\ldots,x_n;y_1,\ldots,y_m)$ by applying the
    $\mathcal{L}_{X_n}$ and $\mathcal{L}_{Y_m}$ operators which have the action
    $\mathcal{L}_{Y_m}( s_\nu[Y_m] ) = y^\nu$ and
\[
\mathcal{L}_{X_n}(s_\nu[s_\mu[X_n]]) = \sum_{\lambda} \aaa^\lambda_{\nu[\mu]} \mathcal{L}_{X_n}( s_\lambda[X_n])
 = \sum_{\lambda} \aaa^\lambda_{\nu[\mu]} x^\lambda~. \qedhere
\]
\end{proof}

\begin{cor}\label{cor:A and B are Elliot rational}
For any partition $\mu$ and positive integers $m,n$, the generating functions
$\mathbb{A}_\mu(X_n;Y_m)$ and $\mathbb{B}_\mu(X_n;Y_m)$ are Elliot rational.
\end{cor}

\begin{proof}
The Cauchy identity is a rational function in the variables
$x_1, x_2, \ldots, x_n$ and $y_1, y_2, \ldots, y_m$.
Since the operators $\CT[\alpha_1, \alpha_2, \ldots, \alpha_m]$
and $\PT[x_1\geqslant x_2\geqslant \cdots\geqslant x_n]$
preserve Elliot rationality by \cite[Theorem 3.2]{Xin.2004}, we
conclude that the operators $\mathcal{L}_{X_n}$ and $\mathcal{L}_{Y_m}$
also preserve Elliot rationality.  Therefore we have that
$\tilde{P}_\gamma(X_n; Y_m)$ are Elliot rational generating
functions as well as the expressions for $\mathbb{A}_\mu(X_n;Y_m)$ 
from Theorem \ref{thm:GLn one} and $\mathbb{B}_\mu(X_n;Y_m)$ from Theorem \ref{thm:GLn two}.
\end{proof}

Corollary \ref{cor:A and B are Elliot rational} at least partially answers
a comment made in \cite[\S3]{Treteault}
which states ``The author firmly believes that such recurrence formulas
can be found to compute $h_m[h_n]$ recursively for any $m$.''
since the existence of rational generating functions for $\GL_n$-plethysm
coefficients implies that they satisfy linear recurrences.
Tr\'eteault's recurrence for the $\SL_2$ plethysm coefficients $a^{[k]}_{3[h]}$
can be derived from the expression for $A_3(z,q)$ found in Table \ref{table.rational A}.

\begin{example}
In the paper \cite{BBDGK} the authors compute several examples of these series for $m=1$.  They note (in Example 2.1)
that the cases of $\mu = (2)$ and $(1,1)$ can easily be derived from identities in \cite[\S I.5, Example~5]{Macdonald}:
\[
\mathbb{B}_2(x_1, x_2, \ldots, x_n; y) = \prod_{i=1}^n \frac{1}{1-y^i (x_1 x_2 \cdots x_i)^2}
\]
and
\[
\mathbb{B}_{11}(x_1, x_2, \ldots, x_n; y) = \prod_{i=1}^{\lfloor n/2 \rfloor} \frac{1}{1-y^i (x_1 x_2 \cdots x_{2i})}~.
\]
In addition, they compute in Example 2.2
\[
\mathbb{B}_{3}(x_1, x_2; y) = \frac{1}{(1-y x_1^3)(1-y^4 x_1^6 x_2^6)} + \frac{x_1^4 x_2^2 y^2 }{(1-y x_1^3)(1-y x_1^2 x_2)(1-y^4 x_1^6 x_2^6)}~.
\]
Expressed in this way, the series can be used to obtain a combinatorial interpretation for the
plethysm coefficients $\aaa_{\mu[3]}^\lambda$ for $\ell(\lambda) \leqslant 2$ as the cardinality of a set of points.
\end{example}

These generating functions provide an approach for proving
the celebrated Foulkes conjecture, which states that
for integers $a,b$, if $a>b$ then,
\[
\left< s_a[s_b]-s_b[s_a], s_\lambda \right> \geqslant 0
\] for all partitions $\lambda$ of $ab$.
If we fix the integer $b$, then for all partitions $\lambda$ such that
$\ell(\lambda)>b$, then $\left< s_b[s_a], s_\lambda \right> =0$.
Therefore, to prove this conjecture for a fixed $b$ and all $a>b$, it is sufficient to show that
\[
\mathbb{B}_{(b)}(x_1,x_2,\ldots,x_b; y) - \mathbb{A}_{(b)}(x_1,x_2,\ldots,x_b;y)
= \sum_{a \geqslant 0} \sum_{\lambda : \ell(\lambda)\leqslant b} \left< s_a[s_b]-s_b[s_a], s_\lambda \right> x^\lambda y^a
\]
has positive coefficients for $a>b$ (note that for $a<b$, the coefficients will be negative). 
The conjecture is trivial for $b\leqslant 2$ and known to hold for $b\leqslant 5$ \cite{CIM} (as well as for some other regimes of parameters).
In Appendix~\ref{appendix.B} we state the results of a computer calculation
for this difference and a positive expression for $b=3$,
thereby proving Foulkes conjecture for $b=3$ and all $a>3$.
\medskip

As another corollary, we obtain a shorter proof of the rationality of Kirillov's generating function \eqref{eq:Kirillov}, first established by 
Mulmuley in \cite[Thm.~1.6.1~(a)]{GCT6}.

\begin{cor}
For fixed partitions, $\lambda$, $\mu$, $\nu$ such that $|\lambda| = |\mu| \cdot |\nu|$,
the generating function
\[
{\mathsf{B}}^\lambda_{\mu[\nu]}(t) = \sum_{r \geqslant 0} \aaa_{r\mu[\nu]}^{r \lambda} t^r
\]
is a rational function in $t$,
where we have used the notation $r \lambda = (r \lambda_1, r \lambda_2, \ldots, r \lambda_\ell)$.
\end{cor}

\begin{proof}
Apply the operator $\CT[x_1,x_2, \ldots, x_n]$ to the expression
\[
\frac{\mathbb{B}_\nu(x_1,\ldots,x_n;y_1,\ldots,y_n)}{(1-t x^{-\lambda})(1-y^{-\mu})}
= \sum_{\gamma : \ell(\gamma) \leqslant n}
\sum_{\tau : \ell(\tau) \leqslant n}
\sum_{r \geqslant 0}
\sum_{d \geqslant 0}
\aaa_{\gamma[\nu]}^{\tau} t^r x^{\tau-r\lambda} y^{\gamma-d\mu}.
\] 
The result is an Elliott rational function by \cite[Theorem 3.2]{Xin.2004}
and the $\CT[x_1,x_2, \ldots, x_n]$ operator kills all terms except when $\tau = r \lambda$ for some $r$.

Next apply the operator $\CT[y_1,y_2, \ldots, y_n]$ and the only terms
that remain after the application of this operator are those that
satisfy $\gamma = d \mu$ and $|r \lambda| = |d \mu| \cdot |\nu|$. Again, the
resulting expression is Elliott rational by \cite[Theorem 3.2]{Xin.2004}
and equal to ${\mathsf{B}}^\lambda_{\mu[\nu]}(t)$~.
\end{proof}

\appendix
\section{Expressions for \texorpdfstring{$A_\mu(z,q)$}{Aμ(z,q)} for \texorpdfstring{$|\mu|=5$}{|μ|=5}}
\label{appendix.A}

An expression for $A_\mu(z,q)$ with a common denominator will generally not be positive.
In the formula below we have provided an expansion of $A_\mu(z,q)$
for $\mu \in \{ (5), (4,1), (3,2), (3,1,1)\}$.
The others for a partition of $5$
can be obtained by applying Theorem \ref{thm:reciprocity}.

\begin{align*}
A_5&(z,q) = \frac{ q^{7} z^{4} + q^{7} z^{2} + q^{6} z^{3} }{ {\left(1-z^{8}\right)} {\left(1-q^{5} z\right)} {\left(1-q^{3} z\right)} }
+
\frac{ q^{5} z^{4} }{ {\left(1-z^{8}\right)} {\left(1-q^{5} z\right)} {\left(1-q^{3} z\right)} {\left(1-q z\right)} }\\
&+
\frac{ q^{5} z^{8} }{ {\left(1-z^{8}\right)} {\left(1-q^{5} z\right)} {\left(1-q^{3} z\right)} {\left(1-q z\right)} {\left(1-z^{2}\right)} }
+ 
\frac{ q^{3} z^{8} }{ {\left(1-z^{8}\right)} {\left(1-q^{5} z\right)} {\left(1-z^{4}\right)} }\\
&+
\frac{ q^{7} z^{8} + q^{6} z^{7} + q^{5} z^{6} }{ {\left(1-z^{8}\right)} {\left(1-q^{5} z\right)} {\left(1-q^{3} z\right)} {\left(1-z^{2}\right)} }
+
\frac{ q^3z^{12} + q^3z^2 }{ {\left(1-z^8\right)} {\left(1-q^{5} z\right)} {\left(1-z^4\right)}^{2} }\\
&+
\frac{ q^{7} z^{6} + q^{6} z^{7} + q^{5} z^{4} }{ {\left(1-z^{8}\right)} {\left(1-q^{5} z\right)} {\left(1-q^{3} z\right)} {\left(1-z^{4}\right)} }
+
\frac{ q^{4} z^{5} }{ {\left(1-z^{8}\right)} {\left(1-q^{5} z\right)} {\left(1-z^{4}\right)} {\left(1-z^{2}\right)} }\\
&+
\frac{ q^{7} z^{6} + q^{6} z^{7} + q^{5} z^{8} }{ {\left(1-z^{8}\right)} {\left(1-q^{5} z\right)} {\left(1-q^{3} z\right)} {\left(1-z^{4}\right)} {\left(1-z^{2}\right)} }
+
\frac{ q^{4} z^{9} + q^{4} z^{3} }{ {\left(1-z^8\right)} {\left(1-q^{5} z\right)} {\left(1-z^{4}\right)} }\\
&+
\frac{ q^{5} z^{22} + q^{5} z^{20} + q^{5} z^{16} + q^{5} z^{14} }{ {\left(1-z^{12}\right)}{\left(1-z^{8}\right)} {\left(1-q^{5} z\right)} {\left(1-z^{4}\right)} {\left(1-q z\right)} }
+
\frac{ q^{2} z^{5} }{ {\left(1-z^{8}\right)} {\left(1-q^{5} z\right)} {\left(1-z^{2}\right)} }\\
&+ 
\frac{ qz^{18} + q }{ {\left(1-z^{12}\right)} {\left(1-z^{8}\right)} {\left(1-q^{5} z\right)} {\left(1-z^{4}\right)} }
+
\frac{ q^{2} z^{11} }{ {\left(1-z^{8}\right)} {\left(1-q^{5} z\right)} {\left(1-z^{6}\right)} {\left(1-z^{2}\right)} }\\
&+ 
\frac{ q^{4} z^{15} }{ {\left(1-z^{12}\right)} {\left(1-z^{8}\right)} {\left(1-q^{5} z\right)} }
+
\frac{ q^{4} z^{19} }{ {\left(1-z^{12}\right)} {\left(1-z^{8}\right)} {\left(1-q^{5} z\right)} {\left(1-z^{2}\right)} }
\end{align*}

\begin{align*}
A_{41}(z,q)=&\frac{q^5 z^6 + q^4 z + q^2 z^3}{(1-q^5z)(1-q^3z)(1-qz)(1-z^6)(1-z^2)}
+\frac{q^3 z^2}{(1-q^5z)(1-q^3z)(1-qz)(1-z^4)(1-z^2)}\\
&+\frac{q^3 z^8}{(1-q^5z)(1-qz)(1-z^6)(1-z^4)(1-z^2)}
+\frac{q^3 z^6 + q^2 z^7}{(1-q^3z)(1-qz)(1-z^6)(1-z^4)(1-z^2)}\\
&+\frac{q z^4+q^2 z^5}{(1-q^5z)(1-q^3z)(1-z^6)(1-z^4)(1-z^2)}
\end{align*}

\begin{align*}
&A_{32}(z,q)=\\
&\frac{q^5z^6 + q z^{6}+ q^{3} z^{10}+ q^{4} z^{5}+ q^{4} z^{3}+ q^3 z^{12}}{(1-q^5z)(1-qz)(1-z^8)(1-z^6)(1-z^4)}
+ \frac{q^{6} z^{3} + q^6 z^{13}+ 2q^8z^7+ q^7 z^8}{(1-q^5z)(1-q^3z)(1-qz)(1-z^8)(1-z^6)}\\
&+\frac{q^{5} z^{6} + q^7z^6 + q^7z^{10}+ q^{5} z^{8}}{(1-q^5z)(1-q^3z)(1-qz)(1-z^8)(1-z^4)}
+\frac{q z^{4} + q^{2} z^{7}+ q^{2} z
+q^{3} z^{6} + q z^{12}+ 2 q^{3} z^{6} + 2 q z^{8}+ q^{3} z^{8}}{(1-q^3z)(1-qz)(1-z^8)(1-z^6)(1-z^4)}\\  
&+\frac{q^7z^6 + q^8z^7  + q^4z^5+ q^{5} z^{4} + q^7z^2+ q^4 z^7}{(1-q^5z)(1-q^3z)(1-qz)(1-z^6)(1-z^4)}
+\frac{q^{2} z^{3} + q^3 z^4 + q^4z^5+ q^6z^5  + q^{3} z^{4}
+q^{2} z^{11}+ q^{2} z^{11}}{(1-q^5z)(1-q^3z)(1-z^8)(1-z^6)(1-z^4)}
\end{align*}

\begin{align*}
A_{311}(z,q)=&\frac{q^{3} z^{4} + q^{2} z^{5} + q z^{6}}{(1-q^3z)(1-qz)(1-z^4)^2(1-z^2)}
+\frac{q^{4} z^{5} + q^{5} z^{2}}{(1-q^5z)(1-q^3z)(1-qz)(1-z^4)(1-z^2)}\\
&+\frac{q^{8} z^{7} + q z^{2} + q^{3} z^{4} + q^{2} z^{5}+ q^{6} z^{5}+ q^{7} z^{4}}{(1-q^5z)(1-q^3z)(1-qz)(1-z^4)^2}
\end{align*}

\section{Foulkes Conjecture for $b=3$}\label{appendix.B}

The generating function of the Foulkes coefficients $\langle s_a[s_3] - s_3[s_a], s_\lambda\rangle$ can be written as a sum of positive rational functions
\[
\mathbb{B}_{3}(x_1,x_2,x_3;z) - \mathbb{A}_3(x_1,x_2,x_3;z) + x_1^2 x_2^2 x_3^2 z^2
= \sum_{(m,S) \in L} \frac{m}{\prod_{t \in S} (1-t)}
\]
where $L$ is the following list of 86 pairs:
\begin{longtable}{ll|ll}
$m$ & $S$ & $m$ & $S$ \\
\hline
$x_1^{23}x_2^{15}x_3^{13}z^{17}$ & $\{m_1,m_2,m_4,m_5,m_6,m_7,m_8\}$ & $x_1^{17}x_2^{10}x_3^{6}z^{11}$ & $\{m_1,m_2,m_4,m_5,m_6,m_7,m_8\}$ \\
$x_1^{26}x_2^{19}x_3^{12}z^{19}$ & $\{m_1,m_2,m_3,m_5,m_6,m_7,m_8\}$ & $x_1^{20}x_2^{12}x_3^{10}z^{14}$ & $\{m_1,m_2,m_3,m_4,m_6,m_7,m_8\}$ \\
$x_1^{21}x_2^{15}x_3^{12}z^{16}$ & $\{m_1,m_2,m_3,m_4,m_5,m_6,m_8\}$ & $x_1^{22}x_2^{13}x_3^{7}z^{14}$ & $\{m_0,m_2,m_3,m_4,m_6,m_7,m_8\}$ \\
$x_1^{31}x_2^{27}x_3^{14}z^{24}$ & $\{m_0,m_1,m_3,m_5,m_6,m_7,m_8\}$ & $x_1^{30}x_2^{23}x_3^{13}z^{22}$ & $\{m_0,m_1,m_3,m_4,m_6,m_7,m_8\}$ \\
$x_1^{23}x_2^{20}x_3^{8}z^{17}$ & $\{m_0,m_1,m_3,m_4,m_5,m_6,m_8\}$ & $x_1^{29}x_2^{22}x_3^{12}z^{21}$ & $\{m_0,m_1,m_2,m_4,m_5,m_7,m_8\}$ \\
$x_1^{26}x_2^{21}x_3^{13}z^{20}$ & $\{m_0,m_1,m_2,m_4,m_5,m_7,m_8\}$ & $x_1^{20}x_2^{19}x_3^{12}z^{17}$ & $\{m_0,m_1,m_2,m_3,m_5,m_6,m_8\}$ \\
$x_1^{18}x_2^{17}x_3^{7}z^{14}$ & $\{m_0,m_1,m_2,m_3,m_5,m_6,m_7\}$ & $x_1^{11}x_2^{9}x_3^{4}z^{8}$ & $\{m_0,m_1,m_2,m_3,m_5,m_6,m_7\}$ \\
$x_1^{9}x_2^{8}x_3^{4}z^{7}$ & $\{m_0,m_1,m_2,m_3,m_5,m_6,m_7\}$ & $x_1^{8}x_2^{6}x_3^{4}z^{6}$ & $\{m_0,m_1,m_2,m_3,m_5,m_6,m_7\}$ \\
$x_1^{21}x_2^{18}x_3^{15}z^{18}$ & $\{m_0,m_1,m_2,m_3,m_4,m_7,m_8\}$ & $x_1^{24}x_2^{21}x_3^{12}z^{19}$ & $\{m_0,m_1,m_2,m_3,m_4,m_6,m_8\}$ \\
$x_1^{20}x_2^{19}x_3^{9}z^{16}$ & $\{m_0,m_1,m_2,m_3,m_4,m_6,m_8\}$ & $x_1^{24}x_2^{21}x_3^{9}z^{18}$ & $\{m_0,m_1,m_2,m_3,m_4,m_5,m_8\}$ \\
$x_1^{13}x_2^{12}x_3^{5}z^{10}$ & $\{m_0,m_1,m_2,m_3,m_4,m_5,m_8\}$ & $x_1^{31}x_2^{25}x_3^{19}z^{25}$ & $\{m_0,m_1,m_2,m_3,m_4,m_5,m_7\}$ \\
$x_1^{31}x_2^{25}x_3^{16}z^{24}$ & $\{m_0,m_1,m_2,m_3,m_4,m_5,m_7\}$ & $x_1^{22}x_2^{19}x_3^{13}z^{18}$ & $\{m_0,m_1,m_2,m_3,m_4,m_5,m_7\}$ \\
$x_1^{14}x_2^{10}x_3^{9}z^{11}$ & $\{m_0,m_1,m_2,m_3,m_4,m_5,m_6\}$ & $x_1^{10}x_2^{8}x_3^{6}z^{8}$ & $\{m_0,m_1,m_2,m_3,m_4,m_5,m_6\}$ \\
$x_1^{11}x_2^{8}x_3^{5}z^{8}$ & $\{m_3,m_4,m_5,m_6,m_7,m_8\}$ & $x_1^{11}x_2^{8}x_3^{8}z^{9}$ & $\{m_2,m_3,m_5,m_6,m_7,m_8\}$ \\
$x_1^{18}x_2^{8}x_3^{7}z^{11}$ & $\{m_2,m_3,m_4,m_6,m_7,m_8\}$ & $x_1^{19}x_2^{12}x_3^{8}z^{13}$ & $\{m_2,m_3,m_4,m_5,m_7,m_8\}$ \\
$x_1^{8}x_2^{4}x_3^{3}z^{5}$ & $\{m_2,m_3,m_4,m_5,m_7,m_8\}$ & $x_1^{15}x_2^{11}x_3^{7}z^{11}$ & $\{m_2,m_3,m_4,m_5,m_6,m_7\}$ \\
$x_1^{11}x_2^{6}x_3^{4}z^{7}$ & $\{m_1,m_3,m_5,m_6,m_7,m_8\}$ & $x_1^{19}x_2^{15}x_3^{11}z^{15}$ & $\{m_1,m_3,m_4,m_5,m_6,m_8\}$ \\
$x_1^{24}x_2^{21}x_3^{15}z^{20}$ & $\{m_1,m_3,m_4,m_5,m_6,m_7\}$ & $x_1^{13}x_2^{8}x_3^{6}z^{9}$ & $\{m_1,m_3,m_4,m_5,m_6,m_7\}$ \\
$x_1^{12}x_2^{10}x_3^{8}z^{10}$ & $\{m_1,m_2,m_5,m_6,m_7,m_8\}$ & $x_1^{10}x_2^{6}x_3^{5}z^{7}$ & $\{m_1,m_2,m_4,m_6,m_7,m_8\}$ \\
$x_1^{19}x_2^{15}x_3^{11}z^{15}$ & $\{m_1,m_2,m_3,m_6,m_7,m_8\}$ & $x_1^{16}x_2^{9}x_3^{8}z^{11}$ & $\{m_1,m_2,m_3,m_6,m_7,m_8\}$ \\
$x_1^{19}x_2^{15}x_3^{14}z^{16}$ & $\{m_1,m_2,m_3,m_5,m_6,m_8\}$ & $x_1^{19}x_2^{15}x_3^{8}z^{14}$ & $\{m_1,m_2,m_3,m_5,m_6,m_8\}$ \\
$x_1^{14}x_2^{9}x_3^{7}z^{10}$ & $\{m_1,m_2,m_3,m_5,m_6,m_7\}$ & $x_1^{23}x_2^{17}x_3^{14}z^{18}$ & $\{m_1,m_2,m_3,m_4,m_7,m_8\}$ \\
$x_1^{23}x_2^{17}x_3^{14}z^{18}$ & $\{m_1,m_2,m_3,m_4,m_6,m_8\}$ & $x_1^{20}x_2^{16}x_3^{12}z^{16}$ & $\{m_1,m_2,m_3,m_4,m_6,m_8\}$ \\
$x_1^{17}x_2^{10}x_3^{9}z^{12}$ & $\{m_1,m_2,m_3,m_4,m_6,m_7\}$ & $x_1^{12}x_2^{6}x_3^{6}z^{8}$ & $\{m_1,m_2,m_3,m_4,m_6,m_7\}$ \\
$x_1^{10}x_2^{10}x_3^{7}z^{9}$ & $\{m_1,m_2,m_3,m_4,m_5,m_7\}$ & $x_1^{10}x_2^{4}x_3^{4}z^{6}$ & $\{m_0,m_3,m_5,m_6,m_7,m_8\}$ \\
$x_1^{9}x_2^{6}x_3^{3}z^{6}$ & $\{m_0,m_3,m_4,m_5,m_7,m_8\}$ & $x_1^{21}x_2^{15}x_3^{12}z^{16}$ & $\{m_0,m_2,m_4,m_5,m_6,m_7\}$ \\
$x_1^{10}x_2^{8}x_3^{3}z^{7}$ & $\{m_0,m_2,m_3,m_5,m_7,m_8\}$ & $x_1^{9}x_2^{6}x_3^{6}z^{7}$ & $\{m_0,m_2,m_3,m_5,m_6,m_7\}$ \\
$x_1^{12}x_2^{12}x_3^{9}z^{11}$ & $\{m_0,m_2,m_3,m_4,m_7,m_8\}$ & $x_1^{7}x_2^{3}x_3^{2}z^{4}$ & $\{m_0,m_2,m_3,m_4,m_7,m_8\}$ \\
$x_1^{22}x_2^{19}x_3^{10}z^{17}$ & $\{m_0,m_2,m_3,m_4,m_5,m_7\}$ & $x_1^{12}x_2^{12}x_3^{6}z^{10}$ & $\{m_0,m_2,m_3,m_4,m_5,m_6\}$ \\
$x_1^{23}x_2^{20}x_3^{11}z^{18}$ & $\{m_0,m_1,m_5,m_6,m_7,m_8\}$ & $x_1^{22}x_2^{16}x_3^{10}z^{16}$ & $\{m_0,m_1,m_4,m_6,m_7,m_8\}$ \\
$x_1^{14}x_2^{11}x_3^{8}z^{11}$ & $\{m_0,m_1,m_4,m_6,m_7,m_8\}$ & $x_1^{12}x_2^{12}x_3^{6}z^{10}$ & $\{m_0,m_1,m_4,m_5,m_7,m_8\}$ \\
$x_1^{12}x_2^{12}x_3^{3}z^{9}$ & $\{m_0,m_1,m_4,m_5,m_6,m_8\}$ & $x_1^{10}x_2^{7}x_3^{4}z^{7}$ & $\{m_0,m_1,m_3,m_6,m_7,m_8\}$ \\
$x_1^{20}x_2^{19}x_3^{9}z^{16}$ & $\{m_0,m_1,m_3,m_5,m_7,m_8\}$ & $x_1^{10}x_2^{10}x_3^{4}z^{8}$ & $\{m_0,m_1,m_3,m_5,m_7,m_8\}$ \\
$x_1^{12}x_2^{9}x_3^{6}z^{9}$ & $\{m_0,m_1,m_3,m_5,m_6,m_7\}$ & $x_1^{14}x_2^{11}x_3^{5}z^{10}$ & $\{m_0,m_1,m_3,m_4,m_7,m_8\}$ \\
$x_1^{14}x_2^{14}x_3^{8}z^{12}$ & $\{m_0,m_1,m_3,m_4,m_6,m_8\}$ & $x_1^{10}x_2^{9}x_3^{5}z^{8}$ & $\{m_0,m_1,m_3,m_4,m_6,m_8\}$ \\
$x_1^{21}x_2^{15}x_3^{12}z^{16}$ & $\{m_0,m_1,m_3,m_4,m_5,m_7\}$ & $x_1^{24}x_2^{24}x_3^{12}z^{20}$ & $\{m_0,m_1,m_3,m_4,m_5,m_6\}$ \\
$x_1^{7}x_2^{6}x_3^{2}z^{5}$ & $\{m_0,m_1,m_2,m_5,m_7,m_8\}$ & $x_1^{10}x_2^{9}x_3^{2}z^{7}$ & $\{m_0,m_1,m_2,m_4,m_6,m_8\}$ \\
$x_1^{12}x_2^{9}x_3^{6}z^{9}$ & $\{m_0,m_1,m_2,m_4,m_5,m_8\}$ & $x_1^{18}x_2^{17}x_3^{10}z^{15}$ & $\{m_0,m_1,m_2,m_4,m_5,m_7\}$ \\
$x_1^{19}x_2^{15}x_3^{8}z^{14}$ & $\{m_0,m_1,m_2,m_3,m_6,m_7\}$ & $x_1^{12}x_2^{12}x_3^{12}z^{12}$ & $\{m_0,m_1,m_2,m_3,m_6,m_7\}$ \\
$x_1^{24}x_2^{24}x_3^{15}z^{21}$ & $\{m_0,m_1,m_2,m_3,m_5,m_7\}$ & $x_1^{13}x_2^{11}x_3^{6}z^{10}$ & $\{m_0,m_1,m_2,m_3,m_5,m_7\}$ \\
$x_1^{23}x_2^{20}x_3^{14}z^{19}$ & $\{m_0,m_1,m_2,m_3,m_5,m_6\}$ & $x_1^{20}x_2^{19}x_3^{15}z^{18}$ & $\{m_0,m_1,m_2,m_3,m_4,m_8\}$ \\
$x_1^{33}x_2^{27}x_3^{18}z^{26}$ & $\{m_0,m_1,m_2,m_3,m_4,m_7\}$ & $x_1^{13}x_2^{10}x_3^{4}z^{9}$ & $\{m_0,m_1,m_2,m_3,m_4,m_6\}$ \\
$x_1^{24}x_2^{21}x_3^{18}z^{21}$ & $\{m_0,m_1,m_2,m_3,m_4,m_5\}$ & $x_1^{15}x_2^{15}x_3^{6}z^{12}$ & $\{m_0,m_1,m_2,m_3,m_4,m_5\}$ \\
\end{longtable}
and
\begin{eqnarray*}
m_0= x_1^{6}x_2^{6}z^{4} & m_1= x_1^{6}x_2^{6}x_3^{6}z^{6} & m_2= x_1^{4}x_2^{4}x_3^{4}z^{4} \\
m_3= x_1^{6}x_2^{6}x_3^{3}z^{5} & m_4= x_1^{6}x_2^{3}x_3^{3}z^{4} & m_5= x_1^{4}x_2^{4}x_3 z^{3} \\
m_6= x_1^{5}x_2^{2}x_3^{2}z^{3} &m_7= x_1^{3} z &m_8= x_1^{2}x_2 z~.
\end{eqnarray*}

\bibliographystyle{alpha}
\bibliography{Bib}

\end{document}